\documentclass[a4paper]{amsart}

\usepackage[utf8]{inputenc}

\usepackage[american]{babel}
\usepackage{amsthm}
\usepackage{amssymb}
\usepackage{amsmath}
\usepackage[shortlabels]{enumitem}
\usepackage{mathrsfs}
\usepackage{mathtools}
\usepackage{url}

\usepackage{tikz} 
\usetikzlibrary{matrix,arrows,calc,decorations.pathreplacing,fit}
\usepackage{tikz-cd}

\numberwithin{equation}{section}

\newtheorem{theorem}{Theorem}[section]
\newtheorem{lemma}[theorem]{Lemma}
\newtheorem{corollary}[theorem]{Corollary}
\newtheorem{proposition}[theorem]{Proposition}

\theoremstyle{definition}
\newtheorem{definition}[theorem]{Definition}
\newtheorem{construction}[theorem]{Construction}
\newtheorem{notation}{Notation}
\theoremstyle{remark}
\newtheorem{remark}[theorem]{Remark}
\newtheorem{example}[theorem]{Example}

\setlength{\parskip}{6pt}


\DeclareMathOperator{\Ad}{Ad}

\DeclareMathOperator{\codim}{codim}

\DeclareMathOperator{\CRYS}{CRIS}
\DeclareMathOperator{\defect}{def}

\DeclareMathOperator{\Fil}{Fil}

\DeclareMathOperator{\id}{id}
\DeclareMathOperator{\image}{im}

\DeclareMathOperator{\Gal}{Gal}
\DeclareMathOperator{\GL}{GL}
\DeclareMathOperator{\GSp}{GSp}

\DeclareMathOperator{\Hom}{Hom}

\DeclareMathOperator{\Isom}{Isom}

\DeclareMathOperator{\Norm}{Norm}

\DeclareMathOperator{\pr}{pr}
\DeclareMathOperator{\Rep}{Rep}

\DeclareMathOperator{\Sh}{Sh}

\DeclareMathOperator{\Spec}{Spec}

\DeclareMathOperator{\rank}{rk}

\DeclareMathOperator{\val}{val}

\def \ad {\mathrm{ad}}

\def \dom {\mathrm{dom}}

\newcommand{\pot}[1]{ [\hspace{-0,17em}[ {#1} ]\hspace{-0,17em}] }


\def \AA {\mathbb{A}}

\def \DD {\mathbb{D}}

\def \FF {\mathbb{F}}
\def \GG {\mathbb{G}}

\def \QQ {\mathbb{Q}}
\def \RR {\mathbb{R}}

\def \ZZ {\mathbb{Z}}

\def \Acal {\mathcal{A}}

\def \Fcal {\mathcal{F}}
\def \Gcal {\mathcal{G}}
\def \Hcal {\mathcal{H}}

\def \Ocal {\mathcal{O}}

\def \Xcal {\mathcal{X}}
\def \Ycal {\mathcal{Y}}

\def \Cfr {\mathfrak{C}}

\def \Ifr {\mathfrak{I}}

\def \Nfr {\mathfrak{N}}

\def \Pfr {\mathfrak{P}}

\def \Sfr {\mathfrak{S}}
\def \Tfr {\mathfrak{T}}

\def \Iscr {\mathscr{I}}

\def \Mscr {\mathscr{M}}

\def \Sscr {\mathscr{S}}

\def \hbar {\bar{h}}

\def \sbar {\bar{s}}

\def \Gsf {\mathbf{G}}

\def \Mbf {\mathbf{M}}

\def \Pbf {\mathbf{P}}

\def \bbf {\mathbf{b}}

\def \Gsf  {\mathsf{G}}

\def \FFbar {\overline{\mathbb{F}}}

\def \QQbar {\overline{\mathbb{Q}}}


\def \BT    {{Barsotti-Tate group}}

\def \Lra   {\Leftrightarrow}

\def \mono  {\hookrightarrow}

\def \epi   {\twoheadrightarrow}

\def \isom  {\stackrel{\sim}{\rightarrow}}

\newcommand{\bigslant}[2]{{\raisebox{.2em}{$#1$}\hspace{-.3em}\left/ \hspace{-.2em}\raisebox{-.2em}{$#2$}\right.}}


\def \Xund {\underline{X}}

\def \sund {\underline{s}}
\def \tund {\underline{t}}
\def \uund {\underline{u}}

\def \univ {{\rm univ}}

\def \Def {{\mathfrak{Def}}}

\def \BTT {{Barsotti-Tate group with crystalline Tate tensors}}
\def \BTTs {{Barsotti-Tate groups with crystalline Tate tensors}}



\newcounter{subenvcounter}
\newenvironment{subenv}{%
 \begin{list}
  {\em (\arabic{subenvcounter})}
  {\setlength{\leftmargin}{20pt}
   \setlength{\rightmargin}{0pt}
   \setlength{\itemindent}{0pt}
   \setlength{\labelsep}{5pt}
   \setlength{\labelwidth}{13pt}
   \setlength{\listparindent}{\parindent}
   \setlength{\parsep}{0pt}
   \setlength{\itemsep}{0pt}
   \setlength{\topsep}{-\parskip}
   \usecounter{subenvcounter}}}
  {\end{list}}

\newcounter{asslistcounter}
\newenvironment{assertionlist}{
 \begin{list}
  {\upshape (\alph{asslistcounter})}
  {\setlength{\leftmargin}{18pt}
   \setlength{\rightmargin}{0pt}
   \setlength{\itemindent}{0pt}
   \setlength{\labelsep}{5pt}
   \setlength{\labelwidth}{13pt}
   \setlength{\listparindent}{\parindent}
   \setlength{\parsep}{0pt}
   \setlength{\itemsep}{0pt}
   \setlength{\topsep}{-.5\parskip}
   \usecounter{asslistcounter}}}
  {\end{list}}

\newenvironment{bulletlist}{
 \begin{list}
  {$\bullet$}
  {\setlength{\leftmargin}{18pt}
   \setlength{\rightmargin}{0pt}
   \setlength{\itemindent}{0pt}
   \setlength{\labelsep}{5pt}
   \setlength{\labelwidth}{13pt}
   \setlength{\listparindent}{\parindent}
   \setlength{\parsep}{0pt}
   \setlength{\itemsep}{0pt}
   \setlength{\topsep}{-.5\parskip}
   \usecounter{bulllistcounter}}}
 {\end{list}}


\author[P.~Hamacher]{Paul~Hamacher}
 
\title[The almost product structure of Newton strata ]{The almost product structure of Newton strata in the Deformation space of a Barsotti-Tate group with crystalline Tate tensors}

\address{Technische Universi\"at M\"unchen \\ Zentrum Mathematik - M11 \\ Boltzmannstra{\ss}e 3 \\ 85748 Garching bei M\"unchen \\ Deutschland}

\email{hamacher@ma.tum.de}
\begin{document}

 \begin{abstract}
  In this paper, we construct the almost product structure of the minimal Newton stratum in deformation spaces of {\BTTs}, similar to Oort's and Mantovan's construction for Shimura varieties of PEL-type. It allows us to describe the geometry of the Newton stratum in terms of the geometry of two simpler objects, the central leaf and the isogeny leaf. This yields the dimension and the closure relations of the Newton strata in the deformation space. In particular, their nonemptiness shows that a generalisation of Grothendieck's conjecture of deformations of {\BT}s with given Newton polygon holds.
  
   As  an application, we determine analogous geometric properties of the Newton stratification of Shimura varieties of Hodge type and prove the equidimensionality of Rapoport-Zink spaces of Hodge type. 
 \end{abstract} 

 \maketitle
 
 \section{Introduction}
 
 Throughout the paper, let $k$ be an algebraically closed field of characteristic $p > 2$. We denote by $W$ the its ring of Witt vectors, $L := W[\frac{1}{p}]$ and by $\sigma$ the Frobenius on $K, W$ or $k$. 
                                                                                                                                                                                                                                                                                                                                                                                                                                                                                                                                                                                                                                                                                                                                                                                                                                                                                                                                                                                                                                                                                                                                                                                                                                                                                                                      
 Let $G$ be a reductive group over $\ZZ_p$, $\bbf$ a $\sigma$-conjugacy class in $G(\widehat\QQ_p^{\rm nr})$ and $\mu$ a cocharacter of $G$ such that $(G,b,-\mu)$ is an integral local Shimura datum of Hodge type in the sense of \cite{kim}~Def.~2.5.10. We fix a faithful representation $i: G \mono \GL(M)$ such that $\mu$ acts with weights $0$ and $1$ on $M$ and a family $\sund$ of tensors of $M$ such that $G$ is the stabiliser of $\sund$. Let $(X,\tund)$ be a {\BTT} over  $k$ such that there exists an isomorphism of the Dieudonn\'e module of $X$ with $M \otimes_{\ZZ_p} W$ identifying its Frobenius with an element of the $\sigma$-conjugacy class $i(\bbf)$ and the crystalline Tate tensors with $\sund \otimes 1$. We denote by $\Def_G(X)$ the formal subscheme of the  deformation space $\Def(X)$ of $X$ cut out by $i$, whose power series-valued points can be regarded as those deformations where the crystalline Tate tensors $\tund$ extend to the whole group (\cite{faltings99}, \cite{moonen98}). The space $\Def_G(X)$ describes the local geometry of the Rapoport-Zink space associated to $(G,\bbf,-\mu)$ defined by Kim in \cite{kim} (see also \cite{HP} for an alternative definition), and in the case where $(G,\bbf,-\mu)$ is induced by a Shimura datum of Hodge type also the local geometry of the canonical integral model of the corresponding Shimura variety (\cite{kisin10}~\S~2.3). 
 
 \subsection{Results on deformation spaces}
 From now on we change the notation as follows. Let $\Def(X)$ and $\Def_G(X)$ denote the algebraisations of the special fibres of the respective formal schemes. The ``universal deformation'' of $(\Xund,\tund)$ algebraises to a {\BTT} $(\Xcal^\univ \tund^\univ)$ over $\Def_G(X)$ by Messing's algebraisation result for {\BT}s \cite{messing72}~Lemma~II.4.16 and de Jong's calculations on Dieudonn\'e crystals in the proof of \cite{dJ98} Prop.-~2.4.8 (cf. \cite{HP} Rem.~2.3.5~c).
 
 The Newton stratification is defined as the stratification corresponding to the isogeny class of the fibre of $(\Xcal^\univ, \tund^\univ)$ over the geometric points of $\Def_G(X)$. By Dieudonn\'e theory the isogeny classes correspond to a certain finite subset $B(G,\mu)$ of the set $B(G)$ of $\sigma$-conjugacy classes in $G(L)$. For $\bbf \in B(G,\mu)$, denote by $\Def_G(X)^\bbf$ the corresponding Newton stratum.

 Denote by $\bbf_0$ the isogeny class of $(X,\sund)$ and by $\Nfr_G(X) := \Def_G(X)^{\bbf_0}$ the (unique) minimal Newton stratum. We construct a surjective inseparable finite-to-finite correspondence between $\Nfr_G(X)$ and the product of its central leaf and its isogeny leaf. Here the central leaf $\Cfr_G(X) \subset \Nfr_G(X)$ is defined as the locus where the fibre of $(\Xcal^\univ, \tund^\univ)$ over  the geometric points is \emph{isomorphic} to $(X,\tund)$ and the isogeny leaf $\Ifr_G(X)$ is the maximal reduced subscheme of $\Nfr_G(X)$ such that the restriction $(\Xcal^\univ, \tund^\univ)_{| \Ifr_G(X)}$ is isogenous to $(X,\tund)_{\Ifr_G(X)}$. If $(G,\bbf,-\mu)$ is of PEL-type and $X$ is minimal, then this almost product structure is nothing else than the restriction of the almost product structure of Newton strata in Shimura varieties of PEL type constructed by Mantovan in \cite{mantovan05}. 
 
 Following Lovering's construction in \cite{lovering}~\S~3, we obtain an $F$-isocrystal with $G_{\QQ_p}$-structure over $\Def_G$. After generalising Yangs purity result for $F$-isocrystals (\cite{yang11} Thm.~1.1) to $F$-isocrystals with $G_{\QQ_p}$-structure, the geometric properties of the Newton stratification follow by a purely combinatorial argument (see e.g.\ \cite{viehmann}~Lemma~5.12).
 
 \begin{theorem} \label{thm main}
  Let $\bbf \in B(G,\mu)$ such that $\bbf \geq \bbf_0$ with respect to the partial order on $B(G)$. Then
  \begin{subenv}
   \item $\Def_G(X)^\bbf$ is non-empty and of pure dimension $\langle \rho, \mu + \overline{\nu}(\bbf)\rangle - \frac{1}{2} \defect(\bbf)$, where $\rho$ denotes the halfsum of positive roots of $G$, $\overline{\nu}(\bbf)$ denotes the Newton point of $\bbf$ and $\defect(\bbf)$ denotes the defect of $\bbf$.
   \item $\overline{\Def_G(X)^\bbf} = \bigcup_{\bbf' \leq \bbf} \Def_G(X)^{\bbf'}$.
  \end{subenv}
 \end{theorem}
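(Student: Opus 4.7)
The plan is to combine three ingredients: (a) the almost product structure on the minimal Newton stratum $\Nfr_G(X)$, which pins down its dimension; (b) a $G$-equivariant purity theorem for Newton strata; and (c) a combinatorial argument in the style of \cite{viehmann}~Lemma~5.12 that propagates from the minimal stratum to the general one.

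First I would, following Lovering, equip $\Def_G(X)$ with an $F$-isocrystal with $G_{\QQ_p}$-structure whose fibrewise Newton point recovers the Newton stratification, and then generalise Yang's theorem \cite{yang11}~Thm.~1.1 to this setting. The $G$-equivariant purity I need is that each stratum $\Def_G(X)^\bbf$ is open in $\overline{\Def_G(X)^\bbf}$ and that the codimension jump at any cover relation $\bbf' \lessdot \bbf$ in $B(G,\mu)$ equals one, matching the difference of the predicted dimensions at consecutive elements. The almost product correspondence between $\Nfr_G(X)$ and $\Cfr_G(X) \times \Ifr_G(X)$ is generically finite and inseparable, hence dimension-preserving, and combined with the separate computations of $\dim \Cfr_G(X)$ and $\dim \Ifr_G(X)$ carried out earlier in the paper it gives
\[
  \dim \Nfr_G(X) = \langle \rho, \mu + \overline{\nu}(\bbf_0)\rangle - \tfrac{1}{2}\defect(\bbf_0),
\]
which is the predicted formula at $\bbf = \bbf_0$.

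To conclude (i) and (ii) for general $\bbf \geq \bbf_0$, I would observe that the $\mu$-ordinary stratum is open and dense in $\Def_G(X)$ and so has the full dimension $\langle 2\rho, \mu\rangle$, matching the formula at the maximum of $B(G,\mu)$. Since $B(G,\mu)$ is graded and the predicted dimension increases by exactly one along any cover relation, purity forces every intermediate stratum $\Def_G(X)^\bbf$ to be non-empty of the predicted dimension: a smaller dimension would leave a codimension gap contradicting purity, while a larger dimension would overshoot $\dim \Def_G(X)$. Assertion (ii) then follows from the inclusion $\overline{\Def_G(X)^\bbf} \subseteq \bigcup_{\bbf' \leq \bbf} \Def_G(X)^{\bbf'}$ supplied by semicontinuity of the Newton point, combined with the non-emptiness just established and purity of each stratum in the closure.

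The main obstacle is twofold and lies in the preparatory steps rather than in the final combinatorics: constructing the almost product structure together with the separate dimension formulae for $\Cfr_G(X)$ and $\Ifr_G(X)$, and upgrading Yang's purity theorem to the setting of $F$-isocrystals with $G_{\QQ_p}$-structure, i.e.\ verifying that cutting by Newton class in $B(G,\mu)$ rather than by the underlying Newton polygon defines the same Zariski-locally closed condition. Once these are in place, the argument above yields both assertions of Theorem~\ref{thm main} simultaneously.
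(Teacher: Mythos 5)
Your overall plan — almost product structure, purity for $F$-isocrystals with $G$-structure, and the combinatorics of \cite{viehmann}~Lemma~5.12 — is exactly the paper's strategy. But there is a logical gap in how you chain these together.

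You assert that the almost product structure together with ``the separate computations of $\dim \Cfr_G(X)$ and $\dim \Ifr_G(X)$'' gives the exact value $\dim \Nfr_G(X) = \langle\rho,\mu+\overline\nu(\bbf_0)\rangle - \tfrac12\defect(\bbf_0)$. The paper does not compute $\dim \Ifr_G(X)$ exactly at this point: identifying $\Ifr_G(X)$ with a formal neighbourhood in a reduced Rapoport-Zink space only yields the \emph{inequality} $\dim \Ifr_G(X) \leq \langle\rho,\mu-\overline\nu(\bbf_0)\rangle-\tfrac12\defect(\bbf_0)$ via Zhu's dimension formula, hence only the upper bound $\dim\Nfr_G(X) \leq \cdots$, equivalently $\codim\Nfr_G(X) \geq \ell[\bbf_0,\bbf_{\max}]$. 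Equidimensionality of the Rapoport-Zink space, and hence the exact value of $\dim\Ifr_G(X)$, is deduced \emph{afterwards} as a corollary of the theorem; taking it as an input makes your argument circular. Relatedly, you claim purity forces the codimension jump across any cover relation $\bbf'\lessdot\bbf$ to equal one. Topological strong purity (the generalisation of Yang's theorem proved in the paper) gives only $\leq 1$, i.e.\ that $I^{\leq\bbf'}$ is either empty or of codimension at most one in each irreducible component $I$ of $S^{\leq\bbf}$. The correct combinatorial argument therefore runs in one fixed direction: the purity bound gives $\codim\Nfr_G(X)\leq\ell[\bbf_0,\bbf_{\max}]$, and combined with the inequality from the almost product structure this pins everything down, with non-emptiness and the closure relations falling out of the equality case. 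You should make this one-sided bookkeeping explicit; as written, the proposal assumes both the exact dimension of $\Nfr_G(X)$ and exact codimension-one drops, neither of which is available before the argument has run.
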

 In the cases where $X$ is without additional structure, or when the additional structure is given by a polarisation (i.e.\ $G=\GL_n$ or $G=\GSp_{n}$), this result was proven by Oort (\cite{oort00} Th.~3.2,~3.3). If the additional structure is of PEL-type, the above theorem was proven in a earlier work of this author (\cite{hamacher15b}) invoking more complicated results on Shimura varieties.
 
 \subsection{Application to Shimura varieties and Rapoport-Zink spaces} 
   
  Let $(\Gsf,X)$ be a Shimura datum of Hodge-type, $K \subset G(\AA_f)$ a small enough compact open subgroup which is hyperspecial at $p$. Denote by $G$ the reductive model of $\Gsf$ corresponding to $K_p$. The existence of a canonical integral model $\Sscr_G$ of the Shimura variety $\Sh_K(\Gsf,X)$ was shown by Kisin \cite{kisin10}, with some exceptions in the case $p=2$. His construction of $\Sscr_G$ also equips it with an Abelian scheme $\Acal_G \to \Sscr_G$ and crystalline Tate tensors $\tund_{G,x}$ on $\DD(\Acal_{G,x}[p^\infty])$ for every point $x \in \Sscr_G(\FFbar_p)$. While the {\BTT} $(\Acal_{G,x}[p^\infty], \tund_x)$ depends on some choices made during the construction of $\Sscr_G$, it induces an isocrystal with $G$-structure $\Gcal_x$ over $\FFbar_p$ which is independent of them. Lovering constructed an isocrystal with $G$-structure over the special fiber $\Sscr_{G,0}$ of $\Sscr_G$, which specialises to $\Gcal_x$ for every $x \in \Sscr_G(\FFbar_p)$. Thus we have a well-defined Newton stratification on $\Sscr_{G,0}$; we denote the Newton strata by $\Sscr_{G,0}^\bbf$.
  
  As the isomorphism of the formal neighbourhood $\Sscr_{G,0,x}^\wedge$ with $\Def_G(\Acal_{G,x}[p^\infty])$ given in \cite{kisin10} \S~2.3 respects the Newton stratification, the following is a direct consequence of Theorem~\ref{thm main}
  \begin{theorem}
   Assume that $\Sscr_G^\bbf$ is non-empty. Then
   \begin{subenv}
    \item $\Sscr_G^\bbf$ is of pure dimension $\langle \rho, \mu + \overline{\nu}(\bbf)\rangle - \frac{1}{2} \defect(\bbf)$.
    \item $\overline{\Sscr_G^\bbf} = \bigcup_{\bbf' \leq \bbf} \Sscr_G^{\bbf'}$.
   \end{subenv}
  \end{theorem}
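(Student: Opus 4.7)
The plan is to deduce the statement from Theorem~\ref{thm main} by a purely local argument, checking both assertions in the formal neighbourhood of each closed point. The essential input, stated in the paragraph preceding the theorem, is that for every closed point $x \in \Sscr_{G,0}$ the isomorphism $\Sscr_{G,0,x}^\wedge \cong \Def_G(\Acal_{G,x}[p^\infty])$ of \cite{kisin10}~\S~2.3 respects the Newton stratification; equivalently, Lovering's isocrystal with $G$-structure on $\Sscr_{G,0}$ pulls back to the universal one on the deformation space. This reduces everything about the stratification of $\Sscr_{G,0}$ to the corresponding statements inside each $\Def_G(\Acal_{G,x}[p^\infty])$.

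For part (1), I would fix an arbitrary closed point $x \in \Sscr_G^\bbf$ and observe that the isogeny class of $(\Acal_{G,x}[p^\infty],\tund_{G,x})$ is exactly $\bbf$; applying Theorem~\ref{thm main}~(1) with $X = \Acal_{G,x}[p^\infty]$ (so that $\bbf_0 = \bbf$ in the notation there) shows that $\Def_G(\Acal_{G,x}[p^\infty])^\bbf$ has pure dimension $\langle \rho, \mu + \overline{\nu}(\bbf)\rangle - \tfrac{1}{2}\defect(\bbf)$. Transferring through the formal isomorphism, $\Sscr_G^\bbf$ has the same local dimension at $x$, and since $x$ was arbitrary pure-dimensionality globally follows.

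For part (2), the inclusion $\overline{\Sscr_G^\bbf} \subseteq \bigcup_{\bbf' \leq \bbf}\Sscr_G^{\bbf'}$ is Grothendieck's specialisation theorem for $F$-isocrystals with $G$-structure applied to Lovering's universal family, or equivalently it follows from applying Theorem~\ref{thm main}~(2) in the formal neighbourhood of each closed point and patching. For the reverse inclusion I would take $x \in \Sscr_G^{\bbf'}$ with $\bbf' \leq \bbf$ and work inside $\Def_G(\Acal_{G,x}[p^\infty])$, whose closed point has isogeny class $\bbf'$; since $\bbf \geq \bbf'$ lies in $B(G,\mu)$, Theorem~\ref{thm main}~(2) places the closed point in the closure of the $\bbf$-stratum of the deformation space, and the formal isomorphism then produces points of $\Sscr_G^\bbf$ in every Zariski neighbourhood of $x$, giving $x \in \overline{\Sscr_G^\bbf}$.

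No serious obstacle is anticipated: all mathematical content is carried by Theorem~\ref{thm main} together with Kisin's description of formal neighbourhoods and Lovering's compatible construction of the global isocrystal with $G$-structure. The only care needed is to verify that both assertions genuinely localise in the formal sense above — immediate for pure-dimensionality, and a matter of constructibility of Newton strata for the closure relation — and that the indexing by $B(G,\mu)$ on the two sides of the formal isomorphism is matched, which is precisely Lovering's compatibility result.
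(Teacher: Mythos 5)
Your proposal is correct and follows exactly the route the paper itself takes: the paper dispenses with the proof in one line, asserting that the statement ``is a direct consequence of Theorem~\ref{thm main}'' because Kisin's formal isomorphism $\Sscr_{G,0,x}^\wedge \cong \Def_G(\Acal_{G,x}[p^\infty])$ respects the Newton stratification (via Lovering's compatibility result), and your write-up simply spells out the local-to-global reduction — matching $\bbf_0$ with the isogeny class at the chosen point, reading off the local dimension for part (1), and using faithful flatness of completion together with constructibility of Newton strata to transfer the closure relation for part (2). No discrepancy with the paper's argument.
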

 
  Another interesting consequence of the almost product structure is the equidimensionality of Rapoport-Zink spaces, which was conjectured by Rapoport in \cite{rapoport05}. It was proven by Viehmann in the cases  $G=\GL_n$ in \cite{viehmann08a} and $G=\GSp_{n}$ in \cite{viehmann08b}. Our precise statement is as follows.
  
  \begin{theorem}
   Any Rapoport-Zink space of Hodge type coming from an unramified local Shimura datum as in \cite{kim} is equidimensional.
  \end{theorem}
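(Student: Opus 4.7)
The Rapoport-Zink space $\Mcal = \Mcal(G,\bbf,-\mu)$ of \cite{kim} is a formal scheme locally formally of finite type over $\Spf W$, and equidimensionality is a statement about its underlying reduced scheme $\Mcal^{red}$, which is a $k$-scheme locally of finite type. The plan is to read the local dimension at each closed point of $\Mcal^{red}$ directly off Theorem~\ref{thm main}, exploiting the fact that the Rapoport-Zink space is, locally, a minimal Newton stratum inside a deformation space.

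Let $y \in \Mcal^{red}(k)$ be a closed point, corresponding to a {\BTT} $(Y_y, \tund_y)$ together with a quasi-isogeny $(X,\tund) \dashrightarrow (Y_y, \tund_y)$; in particular $(Y_y,\tund_y)$ lies in the isogeny class $\bbf_0$. Since quasi-isogenies of BT groups lift uniquely along infinitesimal thickenings, the formal neighbourhood $\Mcal^{\wedge}_y$ pro-represents the deformation functor of $(Y_y,\tund_y)$ with its Tate tensors, so is isomorphic to $\Def_G(Y_y)$. Because every closed point of $\Mcal^{red}$ has Newton polygon $\bbf_0$, passing to the algebraised special fibre and then to reduced structure identifies $\Spec \widehat{\Ocal}_{\Mcal^{red},y}$ with the reduced subscheme of the minimal Newton stratum $\Nfr_G(Y_y) \subset \Def_G(Y_y)$. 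This identification is standard in the PEL case, and in the Hodge type case is implicit in Kim's construction of $\Mcal$.

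Applying Theorem~\ref{thm main} to $(Y_y,\tund_y)$ then shows that $\Nfr_G(Y_y)$ is equidimensional of dimension
\[
d_0 := \langle \rho, \mu + \overline{\nu}(\bbf_0)\rangle - \tfrac{1}{2}\defect(\bbf_0),
\]
a number that depends only on the datum $(G,\mu,\bbf_0)$ and not on the chosen closed point $y$. Equidimensionality transfers from the completion to the local ring $\Ocal_{\Mcal^{red},y}$, since completion preserves the dimension of the quotient by each minimal prime. Thus every irreducible component of $\Mcal^{red}$ passing through $y$ has dimension $d_0$, and because closed points are dense, $\Mcal^{red}$ is globally equidimensional of dimension $d_0$.

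The main obstacle is the first step, namely justifying the identification $(\Mcal^{red})^{\wedge}_y \cong \Nfr_G(Y_y)^{red}$ in the Hodge type setting; here one has to check that Kim's construction produces a formal scheme whose formal completions are exactly the deformation spaces $\Def_G(Y_y)$ and that the locus of points on which the quasi-isogeny stays a quasi-isogeny (equivalently, on which the Newton polygon remains $\bbf_0$) cuts out $\Mcal^{red}$ with reduced structure. Once this geometric input is recorded, the dimension computation above is purely formal.
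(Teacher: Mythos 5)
The identification in your second paragraph is incorrect, and this is the crux of the matter. The formal completion $(\Mcal^{\mathrm{red}})^{\wedge}_y$ is \emph{not} (the reduced subscheme of) the minimal Newton stratum $\Nfr_G(Y_y)$; by the very definition in Section~\ref{ss leaves} it is the isogeny leaf $\Ifr_G(Y_y)$, which is in general a strictly smaller closed subscheme of $\Nfr_G(Y_y)$. The Newton stratum is the locus of deformations whose geometric fibres remain in the isogeny class $\bbf_0$; the isogeny leaf is the smaller locus over which the quasi-isogeny itself extends. The almost product morphism $\widetilde\Cfr_G(Y_y)\times\Ifr_G(Y_y)\to\Nfr_G(Y_y)$ gives $\dim\Nfr_G=\dim\Cfr_G+\dim\Ifr_G$ with $\dim\Cfr_G(Y_y)=2\langle\rho,\overline{\nu}(\bbf_0)\rangle$, which is strictly positive whenever $\bbf_0$ is not basic, so the two schemes you conflate have different dimensions in general. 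A concrete counterexample: for $G=\GL_2$, $\mu=(1,0)$, $\bbf_0$ the $\mu$-ordinary class, the whole one-dimensional deformation space is the minimal Newton stratum, yet the reduced Rapoport--Zink space is a disjoint union of points. Consequently your $d_0=\langle\rho,\mu+\overline{\nu}(\bbf_0)\rangle-\tfrac12\defect(\bbf_0)$ is the dimension of $\Nfr_G$, not of $\Mcal^{\mathrm{red}}$; the correct value is $\langle\rho,\mu-\overline{\nu}(\bbf_0)\rangle-\tfrac12\defect(\bbf_0)$.

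The gap also cannot be closed simply by replacing $\Nfr_G$ with $\Ifr_G$ and invoking Theorem~\ref{thm main}, since that theorem says nothing directly about isogeny leaves. The paper's argument obtains equidimensionality and the dimension of $\Ifr_G(Y_y)$ precisely from the almost product structure: the morphism $\widetilde\Cfr_G(Y_y)\times\Ifr_G(Y_y)\to\Nfr_G(Y_y)$ is finite surjective, $\Nfr_G(Y_y)$ is equidimensional by Theorem~\ref{thm main}, and $\Cfr_G(Y_y)$ is irreducible (being formally smooth and local) of dimension $2\langle\rho,\overline{\nu}(\bbf_0)\rangle$; together these force $\Ifr_G(Y_y)$ to be equidimensional of dimension $\langle\rho,\mu-\overline{\nu}(\bbf_0)\rangle-\tfrac12\defect(\bbf_0)$, a quantity depending only on $(G,\mu,\bbf_0)$ and not on $y$. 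That transfer through the almost product morphism is the essential geometric input your proposal skips.
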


 This is proven by a similar method as in \cite{HV12} where the function field analogue is shown.

 \subsection{Overview}
 We first recall basic definitions and notation needed for the construction of the almost product structure in section~\ref{sect preliminaries}. Moreover we generalise various results known for \BT s to {\BTTs}, i.~a.\ Yang's purity theorem. In section \ref{sect leaves} we discuss geometric properties of the central and isogeny leaves as well as the restiction of the universal deformation to them before defining the almost product structure in section~\ref{sect almost product local}. Finally, we deduce the above theorems in section~\ref{sect dimension formulas}.

 This article was written independently from the article \cite{zhang} of Zhang, which was put on Arxiv approximately two weeks prior to the first version of this article. In his work, he also calculates the dimension of central leaves and Newton strata in $\Sscr_G^\bbf$. In contrast to my work, he does not consider the local geometry of the Shimura variety, but uses the geometry of stratifications of $\Sscr_G^\bbf$ directly.
  
 \emph{Acknowledgements:} I am grateful to Mark Kisin for many helpful discussions and his advice. I warmly thank Thomas Lovering for giving me a preliminary version of his thesis. I thank Stephan Neupert and Eva Viehmann for pointing out some mistakes in the preliminary version of this article. Most of this work was written during a stay at the Harvard University which was supported by a fellowship within the Postdoc program of the German Academic Exchange Service (DAAD). I want to thank the Harvard University for its hospitality. Moreover, the author was partially supported by the ERC starting grant 277889 ``Moduli spaces of local $G$-shtukas''.
  
 \section{Preliminaries} \label{sect preliminaries}
 
  \subsection{The Newton stratification}
 
Let $H$ be a reductive group over $\QQ_p$. We fix a quasi-split inner form $H_0$ togehter with an $H^\ad(\overline{L})$-conjugacy class of isomorphisms $\psi:H_{0,\overline{L}} \isom H_{\overline{L}}$. We fix  $S_0 \subset T_0 \subset B_0 \subset H_0$ where $S_0$ is a maximal split torus, $T_0$ a maximal torus and $B_0$ a Borel subgroup of $H_0$. Denote by $\pi_1(H)$ the  fundamental group of $H$, that is the quotient of the absolute cocharacter lattice by the coroot lattice. This construction can be made independent of the choice of a maximal torus (cf. \cite{RR96}~\S~1).

There is a canonical bijection $\bbf$ between the isomorphism classes of isocrystals with $H$-structure over $k$ and the set of $\sigma$-conjugacy classes. We briefly recall the classification of $\sigma$-conjugacy classes from \cite{kottwitz85}, \cite{kottwitz97} as presented in \cite{RR96}~\S~1.

 For an element $b\in H(L)$ denote by $[b] := \{gb\sigma(b)^{-1} \mid g \in G(L)\}$ the corresponding $\sigma$-conjugacy class and by $B(G)$ the set of $\sigma$-conjugacy classes of $G(L)$; this definition is independent of $k$ by \cite{RR96}~Thm.~1.1. Kottwitz classifies the set by the two invariants $\overline{\nu}$ and $\kappa$   in \cite{kottwitz97} \S~4.13. To define $\overline{\nu}$, denote by $D$ the pro-torus over $\QQ_p$ with character group $\QQ$. Kottwitz defines the slope morphism $\nu: H(L) \to \Hom(D_L,G_L)$ as the unique functorial morphism  such that for $H = \GL(V)$ the quasi-cocharacter $\nu(b)$ induces the slope decomposition of the isocrystal $(V,b\sigma)$. Since $\nu(gb\sigma(g)^{-1}) = Int(g) \circ \nu(b)$ the slope homomorphism induces a $G(L)$-conjugacy class $\overline{\nu}([b])$ in $\Hom(D_L,H_L)$, which is called the Newton point. It can be shown that $\nu([b])$ is invariant under $\sigma$. Rather than working with orbits, we regard $\overline{\nu}([b]) \in X_\ast(S_0)_{\QQ,\dom}$ as the unique dominant element in $\psi^{-1} \circ \nu([b])$. The Kottwitz point $\kappa: B(H) \to \pi_1(H)_{\Gal(\QQbar_p/\QQ_p)}$  is defined as the (unique) transformation of functors such that for $H=\GG_m$ it is induced by $\val_p: \GG_m(\QQ_p) \to \ZZ \cong \pi_1(\GG_m)$.  These invariants define a partial order on $B(G)$ given by $\bbf' \leq \bbf$ iff $\overline{\nu}(\bbf') \leq \overline{\nu}(\bbf)$ with respect to the dominance order and $\kappa(\bbf') = \kappa(\bbf)$. 
 
 We recall the basics of the Newton stratification. All statements in this paragraph are proven in \cite{RR96}~\S~3. If we are given an isocrystal $\Fcal$ with $G$-structure over a Noetherian $k$-scheme $S$, we denote
 \begin{eqnarray*}
  S^\bbf &=& \{ s \in S \mid \bbf(\Fcal_{\sbar}) = \bbf\},
 \end{eqnarray*}
 where $\sbar$ is an arbitrary geometric point over $s$. We define $S^{\leq \bbf}$ and $S^{<\bbf}$ analogously. The decomposition of $S$ as a union of $S^\bbf$ is called the Newton stratification. This is in general not a stratification in the strict sense, i.e.\ the closure of $S^\bbf$ does not need to be a union of strata. However, one has that $S^{\leq \bbf}$ is closed; in particular the $S^\bbf$ are locally closed and can be regarded as reduced subschemes.

 An important geometric property of the Newton stratification is its purity. We distinguish between the following notions of purity.
 
 \begin{definition}
  Let $\Fcal$ be an isocrystal with $H$-stucture over an $\FF_p$-scheme $S$, where $H$ is a reductive group over $\QQ_p$. We say that the Newton stratification of $S$ satisfies
  \begin{subenv}
   \item weak purity, if the embedding $S^\bbf \mono S^{\leq \bbf}$ is affine for every $\bbf \in B(H)$.
   \item strong purity, if the embedding $S^{\leq \bbf} \setminus S^{\leq \bbf'} \mono S^{\leq\bbf}$ is affine for every $\bbf \in B(G)$ and maximal element $\bbf' \in B(H)_{<\bbf}$. 
   \item topological strong purity, if for every $\bbf \in B(H)$ and maximal element $\bbf' \in B(H)_{<\bbf}$ and every irreducible component $I \subset S^{\leq \bbf}$ the subscheme $I^{\leq \bbf'} \subseteq I$ is either empty or pure of codimension $\leq 1$.
  \end{subenv}
 \end{definition}

 It was shown by Vasiu in \cite{vasiu06} that the Newton stratification associated to an $F$-isocrystal without additional structure satisfies weak purity. By evaluating $\Fcal$ at a familiy of representations of $H$ which seperates Newton points, one easily deduces that weak purity also holds for $F$-isocrystals with $H$-structure.

 It is not known whether the Newton stratification associated to an $F$-isocrystal without additional structure satisfies strong purity, but Yang's improvement of de Jong-Oort's purity theorem says that it satisfies topological strong purity if the base scheme is Noetherian (\cite{yang11}~Thm.~1.1).
  
 As topological strong purity will be needed in section~\ref{sect dimension formulas}, we prove the following proposition. 
  
 \begin{proposition} \label{prop purity}
  Let $S$ be an $\FF_p$-scheme such that every $F$-isocrystal over $S$ the Newton stratification satisfies strong purity (resp.\ topological strong purity). Then for any isocrystal with $H$-structure over $S$, the Newton stratification satisfies strong purity (resp.\ topological strong purity).
 \end{proposition}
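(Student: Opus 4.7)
My strategy is to reduce the statement for the $H$-isocrystal $\Fcal$ to the case of an ordinary $F$-isocrystal by evaluating $\Fcal$ at a suitable representation of $H$, following the same pattern that the author uses just above for weak purity. First I would work locally on $S$ and assume $S$ is connected, so that the Kottwitz invariant $\kappa(\bbf(\Fcal_s)) \in \pi_1(H)_{\Gal(\QQbar_p/\QQ_p)}$ is constant on $S$ and the partial order on the classes in $B(H)$ occurring on $S$ reduces to the dominance order on Newton points. Next I would pick a faithful representation $\rho : H \hookrightarrow \GL(V)$, with $V$ a direct sum of enough irreducibles that the induced pushforward $\rho_\ast : B(H) \to B(\GL(V))$ is injective and order-reflecting on the finitely many Newton types of $\Fcal$ actually occurring on $S$. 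Since any Newton type of $\Fcal(V)_s$ necessarily lies in the image of $\rho_\ast$, this yields the set-theoretic identification $S^{\leq\bbf}_\Fcal = S^{\leq\rho_\ast\bbf}_{\Fcal(V)}$ for each relevant $\bbf \in B(H)$.

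With these identifications, the purity statement for $\Fcal$ becomes a purity statement for the $F$-isocrystal $\Fcal(V)$, applied to the pair $(\rho_\ast\bbf, \rho_\ast\bbf')$ for any covering pair $\bbf' \lessdot \bbf$ in $B(H)$. If $\rho_\ast\bbf'$ is itself a maximal element below $\rho_\ast\bbf$ in $B(\GL(V))$, the hypothesis on $\Fcal(V)$ applies directly. In general one refines $\rho_\ast\bbf' \leq \rho_\ast\bbf$ to a maximal chain of covering relations $\rho_\ast\bbf = \gamma_0 \gtrdot \gamma_1 \gtrdot \ldots \gtrdot \gamma_n = \rho_\ast\bbf'$ in $B(\GL(V))$; order-reflection forces every intermediate $\gamma_i$ ($0 < i < n$) to lie outside the image of $\rho_\ast$ (otherwise it would provide an element of $B(H)$ strictly between $\bbf'$ and $\bbf$, contradicting the cover $\bbf' \lessdot \bbf$). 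The only strata of $\Fcal(V)$ actually occurring on $S$ along this chain therefore correspond to elements of $B(H)$, and the affine open immersions (resp.\ codimension-$\leq 1$ bounds on irreducible components) provided by strong (resp.\ topological strong) purity for $\Fcal(V)$ at each step of the chain assemble into the desired statement for $\Fcal$ on $S$.

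The main technical obstacle is arranging order-reflection of $\rho_\ast$ on the Newton types occurring on $S$. Order-preservation is automatic, but the positive-coroot cone of $H$ sits inside the positive-coroot cone of $\GL(V)$ in a subtle way, so $\rho_\ast$ need not be order-reflecting in general. The resolution, mirroring the weak purity case, is to choose $V$ as a sufficiently rich direct sum of irreducibles so that the collective Newton polygons of those irreducibles distinguish the finitely many Newton types in play on $S$ and detect the dominance relations between them. Once this is established, the argument is a direct transfer from the $F$-isocrystal setting.
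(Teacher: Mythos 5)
There is a genuine gap in the way you try to ``assemble'' the purity statements along a refined chain in $B(\GL(V))$. Suppose $\bbf' \lessdot \bbf$ in $B(H)$ and $\rho_\ast\bbf = \gamma_0 \gtrdot \gamma_1 \gtrdot \cdots \gtrdot \gamma_n = \rho_\ast\bbf'$ with $n > 1$. Even granting that the intermediate $\gamma_i$-strata of $\Fcal(V)$ are empty on $S$, the sets $S^{\leq\gamma_i}_{\Fcal(V)}$ for $0 < i < n$ are not under control: they may contain points whose $H$-Newton type $\bbf''$ satisfies $\bbf'' < \bbf$ but $\bbf'' \not\leq \bbf'$ (i.e.\ $\bbf''$ lies below a \emph{different} maximal element of $B(H)_{<\bbf}$), since $\rho_\ast\bbf'' \leq \gamma_i$ does not force $\rho_\ast\bbf'' \leq \gamma_n$. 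Consequently $S^{\leq\gamma_0}\setminus S^{\leq\gamma_n}$ decomposes as a union of pieces, each affine in a \emph{different} ambient closed set $S^{\leq\gamma_{i}}$, and there is no mechanism by which a chain of affine open immersions into successively smaller closed subsets yields an affine open immersion into the largest one. The same objection applies to the codimension bound in the topological case, where the relevant irreducible components of $S^{\leq\gamma_i}$ need not agree with those of $S^{\leq\bbf}_\Fcal$. In short, your argument proves statements about the Newton stratification of $\Fcal(V)$ at \emph{its own} covering pairs, but does not transport them back to the covering pair $(\bbf',\bbf)$ of $B(H)$ unless $n = 1$, which you cannot arrange in general.

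The paper sidesteps exactly this problem by using Viehmann's description of $B(H)_{\leq\bbf'}$: for a maximal $\bbf' < \bbf$ corresponding to a break point at the simple relative root $\beta$, one has $B(H)_{\leq\bbf'} = \{\bbf'' < \bbf \mid \pr_\beta(\overline{\nu}(\bbf'')) < \pr_\beta(\overline{\nu}(\bbf))\}$. The representation $\rho$ is then chosen \emph{for the given $\beta$} (with highest weight a multiple of the fundamental weight attached to $\beta^\vee$, and $i$ the multiplicity of that weight) so that the single functional $\pr_\beta$ on $B(H)$ is detected by the single functional $\pr_i$ on $B(\GL_n)$ and break points transfer. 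This makes $S^{\leq\bbf'}_\Fcal$ literally the intersection of $S^{\leq\bbf}_\Fcal$ with $S^{\leq\gamma'}_{\Fcal(\rho)}$ for the \emph{one} maximal $\gamma' < \rho_\ast\bbf$ corresponding to the break at $i$, so the hypothesis applies directly (after base change), with no chain refinement and no need for an ad hoc order-reflecting faithful embedding. If you want to salvage your approach, you would need to prove that the specific $\rho$ you choose collapses the chain to length one at the relevant break point, which is precisely the content of the paper's construction.
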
 

 The proof will use Viehmann's description of $B(H)_{\leq \bbf'}$, for which we need the following definition.
 
 \begin{definition}
 We consider $\nu \in X_\ast(S_0)_{\QQ, \dom}$ and a simple positive relative root $\beta$ of $H_0$.
  \begin{subenv}
   \item We say that $\nu$ has a break point at $\beta$ if $\langle \beta, \nu \rangle > 0$.
   \item The value of $\nu$ at $\beta$ is given by $ \pr_\beta(\nu) := \langle\omega_{\beta^\vee},\nu\rangle $, where $\omega_{\beta^\vee}$ denotes the corresponding relative fundamental weight of $H_{0,\ad}$. 
  \end{subenv}
 \end{definition}
 
 \begin{example}
 To motivate the definition of break points, consider $H = H_0 = \GL_n$ with $B_0$ the Borel of upper triangular matrices and $S_0=T_0$ the diagonal torus. We have the standard identification $X_\ast(S_0)_{\QQ} = \QQ^n$ such that $\nu \in \QQ^n$ is dominant if $\nu_1 \geq \nu_2 \geq \ldots \geq \nu_n$. The simple roots $\alpha_i$ of $(G,B,T)$ are given by $\langle \alpha_i, \nu \rangle = \nu_i-\nu_{i+1}$.
For simplicity we denote $\pr_{\alpha_i}$ by $\pr_i$.  
  
  Let $P_\nu$ be the polygon associated to $\nu$, that is the graph of the piecewise linear function $f_\nu$ over $[0,n]$ with $f(0) = 0$ and slope $\nu_i$ over $(i-1,i)$. Then the breakpoints of $P_\nu$ are precisely at the $x$-coordinates $i$ such that $\nu_i \not= \nu_{i+1}$, that is $\langle \alpha_i, \nu \rangle >0$. Note that the $y$-coordinate of $f$ at $i$ is given by
 \[
  f_\nu(i) = \nu_1 + \ldots + \nu_i = \pr_i(\nu) - \frac{i}{n} \cdot f_\nu(n).
 \]
 So $f_\nu(i)$ and $\pr_i(\nu)$ differ only by a constant depending on $i$ and $f(n)$. In particular for $\nu,\nu ' \in \QQ^n$ we have $f_{\nu '}(i) < f_{\nu}(i)$ if and only if $\pr_i(\nu ') < \pr_i(\nu)$.
 \end{example}

 \begin{lemma}[\cite{viehmann13}~Lemma~5, Rem.~6]
  There exists a canonical bijection between the maximal elements of $B(H)_{< \bbf}$ and the break points of $\overline{\nu}(\bbf)$. If $\bbf'$ corresponds to a break point at $\beta$, we have
  \[
   B(H)_{\leq \bbf'} = \{\bbf'' \in B(H) \mid \bbf'' < \bbf, \pr_{\beta}(\overline{\nu}(\bbf'')) < \pr_{\beta}(\overline{\nu}(\bbf)) \}
  \]
 \end{lemma}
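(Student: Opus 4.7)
The plan is to use Kottwitz's classification $B(H) \hookrightarrow X_\ast(S_0)_{\QQ,\dom} \times \pi_1(H)_{\Gal(\QQbar_p/\QQ_p)}$, which realizes the partial order on $B(H)$ (at fixed $\kappa$) as the dominance order on Newton points. For $\bbf''<\bbf$ we may thus write $\overline{\nu}(\bbf)-\overline{\nu}(\bbf'')=\sum_\gamma c_\gamma \gamma^\vee$ with $c_\gamma\in \QQ_{\geq 0}$ not all zero, summing over simple positive relative roots $\gamma$ of $H_0$. Since $\langle \omega_{\gamma^\vee}, \gamma'^\vee\rangle = \delta_{\gamma\gamma'}$, the condition $\pr_\beta(\overline{\nu}(\bbf''))<\pr_\beta(\overline{\nu}(\bbf))$ appearing on the right-hand side is equivalent to $c_\beta>0$, so the purported sets are visibly down-closed in $B(H)_{<\bbf}$.

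The first key claim is that for any $\bbf''<\bbf$, some break point $\beta$ of $\overline{\nu}(\bbf)$ satisfies $c_\beta>0$. Otherwise every $\gamma$ with $c_\gamma>0$ would be a non-break point, so $\langle \gamma,\overline{\nu}(\bbf)\rangle=0$ for all such $\gamma$; then for each such $\gamma'$, the dominance condition $\langle \gamma',\overline{\nu}(\bbf'')\rangle \geq 0$ reads $-\sum_\gamma c_\gamma\langle\gamma',\gamma^\vee\rangle\geq 0$. Pairing this against the positive vector $\vec{c}=(c_\gamma)_{\gamma\in I}$ supported on $I:=\{\gamma:c_\gamma>0\}$ yields $-\vec{c}^{\,t} A_I \vec{c} \geq 0$ for the Cartan submatrix $A_I$, contradicting positive-definiteness of $A_I$ on the finite-type Dynkin subdiagram $I$. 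Consequently $B(H)_{<\bbf}$ is covered by the sets on the right-hand side as $\beta$ ranges over the break points.

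For each break point $\beta$, I construct $\bbf'_\beta$ by moving along the ray $t\mapsto \overline{\nu}(\bbf) - t\beta^\vee$. Dominance is preserved for small $t>0$ because $\langle \gamma,\beta^\vee\rangle\leq 0$ for every simple $\gamma\neq \beta$, and by Kottwitz's image description the set of $t>0$ for which $\overline{\nu}(\bbf)-t\beta^\vee$ is the Newton point of some class in $B(H)$ is discrete; let $t_\beta$ be its smallest positive element and let $\bbf'_\beta$ denote the corresponding class. It remains to verify that $\bbf'_\beta$ is the unique maximum of the down-closed set $\{\bbf''<\bbf : c_\beta(\bbf'')>0\}$, which together with the previous step delivers both the bijection with break points and the characterization of $B(H)_{\leq\bbf'_\beta}$. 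The main obstacle is the inequality $c_\beta(\bbf'')\geq t_\beta$ for any such $\bbf''$: the admissibility constraint on $\overline{\nu}(\bbf'')$ couples all coordinates simultaneously, so isolating the $\beta$-component requires pulling admissibility back through the projection $\pr_\beta$ and exploiting the discreteness of the attainable values of $\pr_\beta$ on Newton points of classes with Kottwitz invariant $\kappa(\bbf)$, together with the minimality of $t_\beta$.
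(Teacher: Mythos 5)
The paper does not prove this lemma; it is cited directly from Viehmann's work, which in turn rests on Chai's lattice-point description of Newton points (the remark immediately following the lemma in the paper makes this dependence explicit and discusses the extension beyond the quasi-split case). So your proposal is an attempt to reconstruct an external proof.

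Your first two steps are sound. Writing $\overline{\nu}(\bbf)-\overline{\nu}(\bbf'')=\sum_\gamma c_\gamma\gamma^\vee$ with $c_\gamma\geq 0$ is legitimate because the matching Kottwitz invariants place the difference in the rational span of relative coroots, and then $\pr_\beta(\overline{\nu}(\bbf))-\pr_\beta(\overline{\nu}(\bbf''))=c_\beta$ since $\langle\omega_{\beta^\vee},\gamma^\vee\rangle=\delta_{\beta\gamma}$. The covering argument is essentially right, with one small repair: the Cartan pairing matrix $A_I=(\langle\gamma',\gamma^\vee\rangle)_{\gamma',\gamma\in I}$ is not symmetric in general, so you should contract the inequalities against the symmetrized form $D_I A_I$ (i.e., weight the $\gamma'$-th inequality by $d_{\gamma'}c_{\gamma'}$ where $D_I$ is a symmetrizing diagonal with positive entries) before invoking positive-definiteness of the finite-type Cartan form.

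The gap you flag at the end, however, is genuine and is not a detail but the heart of the lemma. You define $t_\beta$ as the smallest positive $t$ for which $\overline{\nu}(\bbf)-t\beta^\vee$ is a Newton point with the right Kottwitz invariant, and you then need $c_\beta(\bbf'')\geq t_\beta$ for \emph{every} $\bbf''<\bbf$ with $c_\beta(\bbf'')>0$. These are minima over two a priori unrelated sets: $t_\beta$ tracks Newton points lying exactly on the ray, whereas $c_\beta(\bbf'')$ is the $\beta$-coordinate of a Newton point that can move off the ray in all the other coroot directions at once, and nothing in the Kottwitz image description by itself forces the infimum of the second set to be attained on the ray. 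The proof in the literature resolves precisely this by establishing Chai's realization of $\{\overline{\nu}(\bbf''):\kappa(\bbf'')=\kappa(\bbf)\}$ as a set of lattice points in a convex region with explicit edge generators, from which one reads off simultaneously that the minimal nonzero step along the ray produces an element of $B(H)$ and that this element dominates every class in the sector $\{c_\beta>0\}$. Without re-deriving that structure theorem, the last step of your plan does not close; at that point you should cite Chai's Thm.~7.4 or Viehmann's Lemma~5, as the paper itself does.
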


 \begin{remark}
  Technically Chai's article \cite{chai00}, whose calculations are used in above lemma only considers the case where $H$ is quasi-split. But as Chai remarked, this assumption is only for simplicity. Indeed, we can replace $H$ by $H_\ad$ in above lemma and thus assume that $H$ is adjoint. Then by \cite{kottwitz97} p.~295f.\ there exists a bijection $B(H) \to B(H_0)$ which is the identity on Newton points and induces a bijection on Kottwitz points. 
 \end{remark}

 \begin{proof}[Proof of Proposition~\ref{prop purity}]
  None of the above notions change when we replace $\Fcal$ by the induced isocrystal with $H_\ad$-structure; thus we may assume that $H$ is semisimple. 
  
 We prove the theorem by constructing a suitable representation $\rho: H \to \GL_n$ and applying the postulated strong purity of the Newton stratification of $\Fcal(\rho)$ to a suitable breakpoint. Thus it suffices to show that for every simple relative root $\beta$ of $H$ there exists a representation $\rho: H \to \GL_n$ and an integer $1 \leq i \leq n-1$ such that the following conditions are satisfied.
  \begin{assertionlist}
   \item For any $\nu \in X_\ast(S)_{\QQ,\dom}$ with a break point at $\beta$, the quasi-cocharacter $\rho(\nu)_\dom$ has a break point at $i$.
   \item For $\nu' \leq \nu$ one has $\pr_\beta (\nu') < \pr_\beta (\nu)$ if and only if $\pr_i (\rho(\nu')_\dom) < \pr_\beta (\rho(\nu)_\dom)$.
  \end{assertionlist} 
 Let $\omega_\beta$ be the fundamental weight associated to $\beta^\vee$ and $N$ be a positive integer such that $N \omega_\beta \in X^\ast(S_0)$. Now let $\rho$ be any representation such that the restriction to $S_{0\, L}$ has unique highest weight $N\omega_\beta$, and let $i$ be the multiplicity of the weight $N\omega_\beta$ in $\rho$. Then
 \[
  \rho(\nu)_\dom =(\underbrace{N\cdot\pr_\beta(\nu), \ldots, N\cdot\pr_\beta(\nu)}_{i},\langle \lambda, \nu \rangle, \ldots),
 \]
 for some weight $\lambda \not= N\omega_\beta$ of $\rho$.
 
 Thus $\rho$ and $i$ satisfy (b) and $\rho(\nu)_\dom$ has a break point at $i$ iff $\langle N \omega_\beta - \lambda, \nu \rangle > 0$. As
 \[
  \langle N\omega_\beta - \lambda, \nu \rangle > \langle N\omega_\beta - \lambda, \langle \beta,\nu \rangle \cdot \omega_{\beta}^\vee \rangle = \langle \beta,\nu \rangle \cdot \langle N\omega_\beta - \lambda, \omega_\beta^\vee\rangle,
 \]
 it suffices to show that $\langle N\omega_\beta - \lambda, \omega_\beta^\vee\rangle > 0$. Assume the contrary, i.e.\ $\langle N\omega_\beta - \lambda, \omega_\beta^\vee \rangle =0$, or equivalently that $N\omega_\beta -\lambda$ is orthogonal to $N\omega_\beta$ with respect to a scalar product $(\cdot \mid \cdot)$ on $X^\ast(S)_\RR$ which is invariant under the Weyl group. In particular, we would that get $(\lambda | \lambda) > (N\omega_\beta | N\omega_\beta)$. But since the set of weights of $\rho$ is stable under the Weyl group of $G$, $\lambda$ must be contained in the convex hull of the Weyl group orbit of $N\omega_\beta$ (\cite{RR96}~Lemma~2.2). Hence we get $(\lambda | \lambda)\leq (N\omega_\beta | N\omega_\beta)$. Contradiction.
 \end{proof}
 
 \begin{corollary} \label{cor purity}
  Let $\Fcal$ be an isocrystal with $H$-structure over a Noetherian $\FF_p$-scheme $S$. Then the Newton stratification on $S$ satisfies topological strong purity.
 \end{corollary}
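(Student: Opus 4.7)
The plan is to combine Yang's purity theorem for $F$-isocrystals without additional structure with Proposition~\ref{prop purity}, which bootstraps purity properties from the $\GL_n$ case to the case of general reductive structure group. Essentially no new work is needed beyond invoking these two ingredients in sequence.

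More precisely, I would first recall that by Yang's improvement of the de Jong--Oort purity theorem (\cite{yang11}~Thm.~1.1), for any $F$-isocrystal (equivalently, any isocrystal with $\GL_n$-structure) over the Noetherian $\FF_p$-scheme $S$, the Newton stratification satisfies topological strong purity. This verifies the hypothesis of Proposition~\ref{prop purity} for $S$ in the topological strong purity variant.

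Applying Proposition~\ref{prop purity} to the given isocrystal $\Fcal$ with $H$-structure then yields that its Newton stratification on $S$ also satisfies topological strong purity, which is exactly the content of the corollary. The only subtlety worth pointing out is that the hypothesis of Proposition~\ref{prop purity} quantifies over \emph{all} $F$-isocrystals on $S$, which is guaranteed by Yang's theorem precisely because $S$ is Noetherian; no additional check is required once the combinatorial representation-theoretic argument from the proof of Proposition~\ref{prop purity} has been carried out. Since no further obstacle arises, the proof reduces to a one-line citation.
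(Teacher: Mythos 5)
Your proof is exactly the paper's intended argument: the corollary is stated without an explicit proof precisely because it follows immediately by combining Yang's topological strong purity theorem for $F$-isocrystals over Noetherian $\FF_p$-schemes (\cite{yang11}~Thm.~1.1), which verifies the hypothesis of Proposition~\ref{prop purity}, with the conclusion of that proposition applied to $\Fcal$. No further comment is needed.
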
  
 
 \subsection{\BTTs} \label{ss BTTs}
  
 We recall the definition of crystalline Tate tensors as given by Kim in \cite{kim}. For any object $M$ of a rigid quasi-Abelian tensor category, we denote by $M^\otimes$ the direct sum of any finite combination of tensor products, symmetric products, alternating products and duals of $M$. We define a \emph{tensor of $M$} to be a morphism $s: \mathbf{1} \to M^\otimes$, where $\mathbf{1}$ denotes the unit object.
 
  For any {\BT} $\Xcal$ over a formally smooth $\FF_p$-scheme $S$ denote by $\DD(\Xcal)$ its contravariant Dieudonn\'e crystal as in \cite{BBM82}~\S~3.3. It is a locally free crystal of $\mathcal{O}_{S/\ZZ_p,\CRYS}$-module of rank equal to the height of $\Xcal$. The pull-back $\DD(\Xcal)_S$ to the Zariski site of $S$ is naturally equipped with the Hodge filtration $\Fil_{\DD(\Xcal)}^1 := \omega_\Xcal \subset \DD(\Xcal)_S$, which is Zariski-locally a direct summand of rank $\dim \Xcal$. Moreover the relative Frobenius of $\Xcal$ over $S$ induces a map $F: \DD(\Xcal)^{(p)} \to \DD(\Xcal)$, which is also called the Frobenius. As the relative Frobenius is an isogeny, $F$ isnduces an isomorphism of isocrystals $\DD(\Xcal)[\frac{1}{p}]^{(p)} \isom \DD(\Xcal)[\frac{1}{p}]$. The Frobenius induces the Hodge filtration via
 \begin{equation} \label{eq Hodge Filtration vs Frobenius}
  {(\Fil_{\DD(\Xcal)}^1)}^{(p)} = \ker F_{|\DD(\Xcal)_S}.
 \end{equation}
  This additional structure induces a filtration $\Fil_{\DD(\Xcal)^\otimes}^\bullet$ of $\DD(\Xcal)^\otimes_S$ and a morphism of \emph{iso}crystals $F: \DD(\Xcal)[\frac{1}{p}]^{\otimes\, (p)} \to \DD(\Xcal)[\frac{1}{p}]^\otimes$.
 
 \begin{definition}
  Let $\Xcal$ be a {\BT} over a formally smooth scheme $S$ in characteristic $p$. A tensor $t$ of $\DD(\Xcal)$ is called a crystalline Tate tensor if it induces a morphism of $F$-isocrystals $\mathbf{1} \to \DD(\Xcal)[\frac{1}{p}]^\otimes$, i.e. it is Frobenius equivariant.
 \end{definition}
 
 \begin{remark}
  It follows from equation~\ref{eq Hodge Filtration vs Frobenius} and the Frobenius equivariance that any crystalline Tate tensor is contained in $\Fil_{\DD(\Xcal)^\otimes}^0$.
 \end{remark}

 We fix a reductive group scheme $G$ over $\ZZ_p$ and let $M$ be faithful representation of $G$ of finite rank. By \cite{kim}~Prop.~2.1.3 (see \cite{kisin10} Prop.1.3.2 for the original statement) there exists a family of tensors $\sund \subset M^\otimes$ such that $G$ equals the stabilizer of $M$. We fix such a choice of $(M,\sund)$.
 
 \begin{definition}
  Let $\Xcal$ be a {\BT} over a formally smooth scheme $S$ in characteristic $p$. An $\sund$-structure on $\Xcal$ is a family $\tund$ of crystalline Tate tensors of $\DD(\Xcal)$, such that $(\DD(\Xcal),\tund)$ is fppf locally isomorphic to $(M,\sund)$. That is, for each $(U,T,\delta) \in \CRYS(S/\ZZ_p)$ there is an fppf covering $(U_i,T_i,\delta_i)_i \to (U,T,\delta)$ such that $(\DD(\Xcal)_{(U_i,T_i,\delta_i)},\tund) \cong (M,\sund) \otimes \Ocal_{T_i}$.
 \end{definition}
 
 \begin{remark} \label{rem s structure}
 In the definitions 4.6 in \cite{kim} and  2.3.3 in \cite{HP} they demand that the family of crystalline Tate tensors satisfies the following conditions:
 \begin{assertionlist}
  \item $\Pbf := \underline\Isom ((\DD(\Xcal),\tund), (M,\sund) \otimes \Ocal_{S,\CRYS})$ is a crystal of $G$-torsors.
  \item  \'Etale locally, there exists an isomorphism $\DD(\Xcal)_S \cong M \otimes \Ocal_S$ such that $\Fil^1_{\DD(\Xcal)}$  is induced by a cocharacter in a fixed conjugacy class of $\Hom(\GG_m,G)$.
 \end{assertionlist}
 We note that the first condition is equivalent to our definition above and the second condition is satisfied if for a unique conjugacy class if $S$ is connected. Indeed, as $\Fil^1_{\DD(\Xcal)}$ is locally a direct summand, the second condition is clopen on $S$ and is satisfied for an arbitrary point of $S$ by \cite{kim} Lemma~2.5.7 (2).
 \end{remark}
 
 Given any {\BT} with $\sund$-structure, we obtain an functor
 \[
  \Gcal: \Rep_{\ZZ_p} G \to \{\textnormal{ strongly divisible filtered } F-\textnormal{crystals }\}.
 \]  
 by following Lovering's construction in \cite{lovering}. In particular, after inverting $p$, we obtain an $F$-isocrystal with $G_{\QQ_p}$-structure in the sense of Rapoport and Richartz.

  \subsection{Central leaves}
 We briefly recall Oort's notion of central leaves and extend it to {\BTTs}. We call a {\BTT} $(\Xcal^\univ, \tund)$ over an $\FF_p$-scheme $S$ \emph{geometrically fiberwise constant} if each pair of fibers of $(\Xcal^\univ, \tund)$ over points of $S$ becomes isomorphic after base change to a sufficiently large field. Under some mild conditions, it suffices to check whether $\Xcal^\univ$ is geometrically fiberwise constant.
 
 \begin{lemma} \label{lem constant BTT}
  A {\BTT} $(\Xcal, \tund)$ over a connected $k$-scheme $S$ is geometrically fiberwise constant if and only if $\Xcal$ is.
 \end{lemma}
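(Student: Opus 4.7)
The ``only if'' direction is immediate, since any isomorphism of pairs $(\Xcal_{\bar s}, \tund_{\bar s}) \isom (\Xcal_{\bar t}, \tund_{\bar t})$ restricts to an isomorphism of the underlying {\BT}s. For the ``if'' direction, suppose $\Xcal$ is geometrically fiberwise constant. The strategy is to trivialize $\Xcal$ on a suitable cover of $S$ and then exploit the Frobenius-equivariance of the crystalline Tate tensors to conclude that, up to such a trivialization, $\tund$ is forced to be constant.

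First I would replace $S$ by its perfection (which preserves connectedness and does not alter the geometric fibers). On the perfect connected base $S^{\rm perf}$, the hypothesis that $\Xcal$ is geometrically fiberwise constant places us inside a central leaf for the underlying {\BT}; by Oort's structural theory of central leaves, there exists a faithfully flat (in fact pro-\'etale after passing to the completely slope divisible form) cover $\pi\colon S' \to S^{\rm perf}$, with $S'$ connected, such that $\pi^{\ast}\Xcal \cong \Xbf \times_k S'$ for some fixed {\BT} $\Xbf$ over $k$.

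Under such a trivialization, $\DD(\pi^{\ast}\Xcal) \cong \DD(\Xbf) \otimes_{W} \Ocal_{S'/W}$ as $F$-crystals, with Frobenius acting diagonally. Thus $\pi^{\ast}\tund$ consists of $F$-invariant global sections of $\DD(\Xbf)^{\otimes} \otimes_{W} \Ocal_{S'/W}$. Since $S'$ is perfect and connected, the $F$-invariants of $\Ocal_{S'/W}$ are canonically $\ZZ_p$, and hence
\[
 \Gamma\bigl(S'/W,\, \DD(\Xbf)^{\otimes} \otimes \Ocal_{S'/W}\bigr)^{F=1} \;=\; \DD(\Xbf)^{\otimes,\,F=1}.
\]
Consequently $\pi^{\ast}\tund$ is induced by a fixed tuple $\tund_0 \in \DD(\Xbf)^{\otimes,\,F=1}$, and so $(\pi^{\ast}\Xcal, \pi^{\ast}\tund) \cong (\Xbf, \tund_0) \times_k S'$ as {\BTTs}. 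Since every geometric point of $S$ lifts to $S'$, all geometric fibers of $(\Xcal, \tund)$ become isomorphic to $(\Xbf, \tund_0)$, as required.

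The main obstacle is the existence of the trivializing cover $\pi\colon S' \to S^{\rm perf}$: this rests on Oort's theory of central leaves and completely slope divisible {\BT}s for the underlying {\BT}, and it is the only point where the assumption of geometric fiberwise constancy of $\Xcal$ is really used. Once this trivialization is available, the remaining step is the essentially formal ``constant $F$-invariants'' principle over a perfect connected base.
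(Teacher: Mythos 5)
Your argument follows essentially the same route as the paper's: pass to a cover trivializing $\Xcal$ (the paper invokes Chai's Lemma~3.3.3, whereas you appeal to Oort's theory of central leaves — closely related but not quite the same reference), reduce to a perfect connected affine base, and then use rigidity of $F$-invariant sections over such a base to force the tensors to be constant; the paper packages this last step as \cite{RR96}~Lemma~3.9 rather than arguing by hand. One small caveat: your step ``$\Ocal_{S'/W}^{F=1}=\ZZ_p$, and hence $\Gamma(S'/W,\DD(\Xbf)^{\otimes}\otimes\Ocal_{S'/W})^{F=1}=\DD(\Xbf)^{\otimes,\,F=1}$'' is not a purely formal consequence, since the Frobenius on $\DD(\Xbf)^{\otimes}$ is not the identity; to make it precise one should observe that an $F$-invariant section necessarily lives in the slope-zero part of $\DD(\Xbf)^{\otimes}[\tfrac1p]$, which (because $k$ is algebraically closed) is of the form $V\otimes_{\QQ_p}L$ with $F$ acting as $\mathrm{id}_V\otimes\sigma$, so the invariants are $V\otimes_{\QQ_p}W(R)[\tfrac1p]^{\sigma}=V$ — and this is precisely what the cited lemma records. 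Also note the paper first reduces to $S$ integral (rather than merely perfect and connected), which makes the computation of $W(R)^{\sigma}$ immediate; your connectedness assumption alone does suffice, but it requires the extra argument that $\{r\in R: r^p=r\}=\FF_p$ for any connected $\FF_p$-algebra $R$.
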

 \begin{proof}
  Obviously $\Xcal$ is geometrically fiberwise constant if $(\Xcal, \tund)$ is. On the other hand, assume that $\Xcal$ is geometrically fiberwise constant. After replacing $S$ by an irreducible component, we may assume that $S$ is integral. By \cite{chai}~Lemma~3.3.3 there exists a surjective morphism $T \to S$, such that $\Xcal_T$ is constant, i.e.\  $\Xcal_T \cong X_T$ for a {\BT} $X$ over $k$. Thus we may assume without loss of generality that $\Xcal$ is constant and further that $S = \Spec R$ is affine and perfect. Then by \cite{RR96}~Lemma~3.9 the morphism $u: \mathbf{1} \to \DD(\Xcal)$ is constant, so in particular the isomorphism type of $(\Xcal,(u_\alpha))$ over any point of $S$ is the same.
  \end{proof}
 
 In \cite{oort04}, Oort associates to a {\BT} $\Xcal$ over an $\FF_p$-scheme $S$ a foliation of $S$, such that the restriction of $\Xcal$ to the strata is geometrically fiberwise constant. We generalize this definition to {\BTTs} as follows.
 
 \begin{definition}
  Let $(\Xcal,\tund)$ be a {\BTT} over a $k$-scheme $S$. For a fixed {\BTT} $(X_0,\tund_0)$ over in $S$ the central leaf of $(\Xcal,\tund)$ is defined as
  \[
   C_{(\Xcal,\tund)}(X_0,\tund_0) := \{ s \in S \mid (\Xcal^\univ,\tund)_{\overline{s}} \cong (X_0,\tund_0) \otimes \overline{\kappa(s)} \}
  \]
 \end{definition}
 
 \begin{proposition}
  Assume that $S$ is an excellent $\FF_p$-scheme. Then the central leaf is a closed subset of a Newton stratum.
 \end{proposition}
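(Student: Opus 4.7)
The plan is to reduce to Oort's closedness theorem for central leaves of the underlying \BT s, and then handle the tensor data via a constancy argument on a trivializing cover.

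First I would show that $C_{(\Xcal,\tund)}(X_0,\tund_0)$ is contained in the Newton stratum $S^\bbf$, where $\bbf$ is the $\sigma$-conjugacy class of $(X_0,\tund_0)$: any isomorphism of \BTT s at a geometric point induces an isomorphism of the associated isocrystals with $G$-structure, and hence preserves Newton points. Moreover $C_{(\Xcal,\tund)}(X_0,\tund_0)\subseteq C_\Xcal(X_0)$, the Oort central leaf for the underlying \BT, which by \cite{oort04} is closed in $S^\bbf$. Replacing $S$ by $C_\Xcal(X_0)$ (still excellent as a closed subscheme of an excellent scheme), I am reduced to showing closedness of the central leaf in $S$ under the additional assumption that $\Xcal$ is geometrically fiberwise constant.

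Following the strategy of the proof of Lemma~\ref{lem constant BTT}, after restricting to irreducible components of $S$ I apply \cite{chai}~Lemma~3.3.3 to obtain a finite surjective morphism $T \to S$ with $\Xcal_T \cong X_0 \times T$. Let $\Ctilde \subseteq T$ be the preimage of $C_{(\Xcal,\tund)}(X_0,\tund_0)$; it is saturated under $T \to S$, since its defining condition is fiberwise. Passing to the perfection $T^\perf$, which is a universal homeomorphism and so preserves connected components, and restricting to an arbitrary connected component $T_i^\perf$, I have $\Xcal_{T_i^\perf} \cong X_0 \times T_i^\perf$, and by \cite{RR96}~Lemma~3.9 each tensor in $\tund_{T_i^\perf}$, viewed as a morphism $\mathbf{1} \to \DD(X_0 \times T_i^\perf)^\otimes[\tfrac{1}{p}]$ of isocrystals, is constant. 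Hence $\tund_{T_i^\perf}$ descends to a fixed $\sund$-structure $\tund'_0$ on $X_0$ over $k$, and $T_i^\perf$ lies in (the preimage of) $\Ctilde$ if and only if $(X_0,\tund'_0) \cong (X_0,\tund_0)$. Thus $\Ctilde$ is a union of connected components of $T$.

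Since $T$ is of finite type over the excellent --- hence locally Noetherian --- scheme $S$, working affine-locally one has that $T$ is Noetherian and has only finitely many connected components, which are clopen. Hence $\Ctilde$ is closed in $T$. Descending along the closed surjection $T \to S$ using the saturation of $\Ctilde$, the image $C_{(\Xcal,\tund)}(X_0,\tund_0) \subseteq S$ is closed. The main obstacle is this descent-constancy interplay: one establishes constancy of tensors only on the perfection $T^\perf$, but must convert it back to a closed description on $T$ itself. This works precisely because $T^\perf \to T$ is a universal homeomorphism preserving connected components and because $T$ has only finitely many of them, by excellence of $S$ combined with finite-typeness of $T \to S$.
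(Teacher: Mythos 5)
Your overall strategy matches the paper's: reduce to Oort's result (\cite{oort04}~Prop.~2.2) that the Oort central leaf $C_\Xcal(X_0)$ for the underlying Barsotti--Tate group is closed in a Newton stratum, then handle the tensor data via a constancy statement. However, there is a genuine gap in the way you handle the second step.

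You claim that Chai's Lemma~3.3.3 produces a \emph{finite} surjective morphism $T \to S$ with $\Xcal_T \cong X_0 \times T$. The paper's own proof of Lemma~\ref{lem constant BTT} only asserts that this morphism is surjective, not finite, and I see no reason a finite cover should trivialize the entire $p$-divisible group (trivializing $p^m$-torsion at each level would require a profinite tower). This finiteness assertion is load-bearing in two places: you use it to ensure that $T$ is Noetherian with finitely many connected components, and more seriously you use it for the final descent step (``Descending along the closed surjection $T \to S$''). Without finiteness, $T \to S$ need not be closed, and the image of the saturated clopen $\Ctilde$ need not be closed in $S$.

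The paper avoids this entirely by invoking Lemma~\ref{lem constant BTT} as a black box rather than re-running its internals. Once you replace $S$ by $C_\Xcal(X_0)$, for every connected component $S_0$ the lemma applied to $(\Xcal,\tund)|_{S_0}$ says that this restriction is geometrically fiberwise constant, so $S_0$ is either entirely contained in $C_{(\Xcal,\tund)}(X_0,\tund_0)$ or disjoint from it. Hence the central leaf with tensors is a union of connected components of $C_\Xcal(X_0)$, which --- because $C_\Xcal(X_0)$ is a closed subset of the locally Noetherian $S^\bbf$ and so is locally connected --- is automatically clopen, hence closed in $S^\bbf$. No trivializing cover or descent is needed, and the unsupported finiteness claim disappears.
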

 \begin{proof}
  By \cite{oort04}~Prop.~2.2 this holds for \BT s without additional structure. Now the claim follows by Lemma~\ref{lem constant BTT}.
 \end{proof}
 
 \subsection{Transfer of quasi-isogenies}
 The almost product structure is based on the following construction. 
  
 \begin{construction} \label{constr isog}
  Let $S$ be an $\FF_p$-scheme and $\Xcal, \Ycal$ \BT s over $S$ such that we have an isomorphism $j: \Xcal[p^{m}] \isom \Ycal[p^{m}]$ for some integer $m$. Let $\varphi: \Ycal \to \Ycal'$ be a quasi-isogeny such that there exist integers $m_1,m_2$ with $m_1+m_2 \leq m$ such that $p^{m_1} \varphi$ and $p^{m_2} \varphi^{-1}$ are isogenies. We define the quasi-isogeny $\varphi_{\Xcal,j}$ as the concatenation
 \[
  \Xcal \stackrel{\cdot p^{-m_1}}{\longrightarrow} \Xcal \epi \bigslant{\Xcal}{j^{-1}(\ker (p^{m_1}\varphi))}.
 \]
 If it is obvious from the context which isomorphism $j$ is used we may also write $\varphi_{\Xcal,j}$ as $\varphi_\Xcal$
 \end{construction}
 
  To generalise this contruction to {\BTTs}, define the truncation $t \mod p^m$  of a crystalline Tate tensor $t$ as concatenation $\mathbf{1} \to \DD(\Xcal)^\otimes \to \DD(\Xcal[p^m])^\otimes$. Note that $\DD(\Xcal[p^m]) \cong \DD(\Xcal) \otimes \Ocal_{S, \CRYS}/p^m$ (\cite{BBM82}~Thm.~3.3.2 (ii)).
 
 \begin{proposition} \label{prop constr}
  Let $S$ be a formally smooth $\FF_p$-scheme and $(\Xcal,\tund), (\Ycal,\uund)$ be \BT s with $\sund$-structure over $S$ such that we have an isomorphism $j: \Xcal[p^{m}] \isom \Ycal[p^{m}]$ which identifies $\tund\ {\rm mod}\ p^m$ and $\uund\ {\rm mod}\ p^m$. Let $\varphi: (\Ycal,\uund) \to (\Ycal',\uund')$ be a quasi-isogeny of {\BTTs} such that there exist integers $m_1,m_2$ as above. Then $\tund' := \varphi_{\Xcal,j\, \ast} (\tund)$ is a $\sund$-structure on $\Xcal' := \image\varphi_{\Xcal,j}$. 
 \end{proposition}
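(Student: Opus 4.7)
The plan is to transfer the $\sund$-structure from $(\Ycal',\uund')$ to $(\Xcal',\tund')$ through a truncation isomorphism induced by $j$. There are three things to check: that each component of $\tund'$ is Frobenius-equivariant, that it is integral (a priori $\tund'$ lies only in $\DD(\Xcal')^\otimes[\tfrac{1}{p}]$), and that fppf-locally $(\DD(\Xcal'),\tund')$ is isomorphic to $(M,\sund) \otimes \Ocal$. Frobenius-equivariance is immediate: any quasi-isogeny induces an isomorphism of rational $F$-isocrystals, so the pushforward of a Frobenius-equivariant tensor remains Frobenius-equivariant.

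The key step is to propagate $j$ through the two quasi-isogenies. Writing $K_\Ycal := \ker(p^{m_1}\varphi) \subset \Ycal[p^{m_1}]$ and $K_\Xcal := j^{-1}(K_\Ycal) \subset \Xcal[p^{m_1}]$, one has $\Xcal' = \Xcal/K_\Xcal$ and $\Ycal' = \Ycal/K_\Ycal$. Since $K_\Xcal, K_\Ycal$ lie in the $p^{m_1}$-torsion and are identified by $j$, the isomorphism $j$ descends to
\[
 j': \Xcal'[p^{m-m_1}] \isom \Ycal'[p^{m-m_1}].
\]
By Dieudonné theory this gives $j'_\ast:\DD(\Xcal')/p^{m-m_1} \isom \DD(\Ycal')/p^{m-m_1}$, and a direct unwinding of Construction~\ref{constr isog} together with the hypothesis that $j_\ast(\tund \bmod p^m) = \uund \bmod p^m$ shows that $j'_\ast$ sends $\tund' \bmod p^{m-m_1}$ to $\uund' \bmod p^{m-m_1}$.

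From this truncated identification and the fact that $\uund'$ is a $\sund$-structure, both integrality of $\tund'$ and the local isomorphism follow. For integrality: the quasi-isogeny $\varphi_{\Xcal,j}$ has denominators bounded by a constant $c$ depending only on $m_1, m_2$ and the tensor weights, so $\tund' \in p^{-c}\DD(\Xcal')^\otimes$; enlarging $m$ if necessary so that $m - m_1 > c$ (which we can do by lifting $j$ to a higher level of truncation), the mod-$p^{m-m_1}$ integrality forces $\tund' \in \DD(\Xcal')^\otimes$. For the fppf-local isomorphism: working on an fppf cover that trivialises $(\DD(\Ycal'),\uund') \cong (M,\sund) \otimes \Ocal$, we transport the trivialisation through $j'_\ast$ to obtain one for $(\DD(\Xcal')/p^{m-m_1}, \tund' \bmod p^{m-m_1})$, and then lift to an integral trivialisation by the smoothness of the $G$-torsor of frames (cf.\ Remark~\ref{rem s structure}).

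The main obstacle is the middle step: rigorously checking that $j'_\ast$ identifies $\tund' \bmod p^{m-m_1}$ with $\uund' \bmod p^{m-m_1}$. This requires a careful diagram-chase combining the compatibility of $\DD(\cdot)$ with isogeny quotients and with truncation (Berthelot-Breen-Messing) with the hypothesis that $j$ intertwines $\tund$ and $\uund$ at level $p^m$. Once this compatibility is in place the remainder of the proof is essentially bookkeeping.
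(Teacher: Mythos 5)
Your outline correctly identifies the three things to check (Frobenius equivariance, integrality, fppf-local triviality), and the first step — descending $j$ to a truncated isomorphism $j'$ of the isogeny quotients — is fine. However, the integrality step has a genuine gap. You invoke ``enlarging $m$ if necessary so that $m - m_1 > c$ (which we can do by lifting $j$ to a higher level of truncation)'', but no such lift is available from the hypotheses: the data only provide $j$ at level $p^m$, and there is no reason for $j$ to extend to $\Xcal[p^{m'}] \isom \Ycal[p^{m'}]$ for $m' > m$. Two Barsotti-Tate groups over $S$ can agree mod $p^m$ without agreeing at any higher level (this is exactly the situation away from the central leaf). Since the denominator bound $c$ of $\varphi_{\Xcal,j\,\ast}$ grows with the tensor degree and with $m_1, m_2$, it can easily exceed $m - m_1$, and then the identification of $\tund' \bmod p^{m-m_1}$ with $\uund' \bmod p^{m-m_1}$ does not force $\tund'$ to be integral. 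There is also a mild circularity in your last step: the $G$-torsor of frames for $(\DD(\Xcal'),\tund')$ only makes sense once $\tund'$ is known to be integral.

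The paper sidesteps both problems by applying the smoothness lifting \emph{before} passing to the isogeny quotients, and to a different torsor: it considers $P = \underline{\Isom}_A\bigl((\Mbf_0,\uund),(\Mbf,\tund)\bigr)$ with $\Mbf = \DD(\Xcal)(A)$, $\Mbf_0 = \DD(\Ycal)(A)$, which is a $G_A$-torsor and hence formally smooth. The truncated isomorphism $j$ gives a section $j_{\CRYS} \in P(A/p^m)$, and formal smoothness lifts it to an integral section $\tilde{j}_{\CRYS} \in P(A)$. Because the lattices $\Mbf'$ and $\Mbf_0'$ sit between $p^{m_1}\Mbf \subset p^{-m_2}\Mbf$ (resp.\ for $\Mbf_0$) and are pinned down by the kernel data $j^{-1}(\ker(p^{m_1}\varphi))$, which is exactly mod-$p^m$ information, the integral isomorphism $\tilde{j}_{\CRYS}$ carries $\Mbf_0'$ onto $\Mbf'$. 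It also intertwines $\tund'$ and $\uund'$ after the rational identifications by $\varphi_{\Xcal}$ and $\varphi$. Integrality of $\tund'$ and the fppf-local isomorphism with $(M,\sund)\otimes\Ocal$ then both fall out of the corresponding properties of $(\Mbf_0',\uund')$ in one stroke, without any need for denominator estimates or enlarging $m$. Your instinct to use the smoothness of the $G$-torsor is the right one; the fix is to apply it once at the level of $\DD(\Xcal), \DD(\Ycal)$ rather than trying to bootstrap from truncated data on $\DD(\Xcal'), \DD(\Ycal')$.
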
 
 \begin{proof}
  By construction, $\tund'$ is a family of tensors on $\DD(\Xcal')[\frac{1}{p}]$. We have to show that they factor through $\DD(\Xcal)$, are $F$-equivariant and that $(\DD(\Xcal),\tund)$ is locally isomorphic to $(M,\sund)$.
 
  We assume without loss of generality that $S = \Spec R$ is affine and denote by $A$ the universal $p$-adically complete PD-extension of a smooth lift of $R$. It suffices to check the above assertions on $A$-sections. We denote
  \begin{eqnarray*}
   \Mbf &:=& \DD(\Xcal)(A) \\
   \Mbf' &:=& \DD(\Xcal')(A) \\
   \Mbf_0 &:=& \DD(\Ycal)(A) \\
   \Mbf'_0 &:=& \DD(\Ycal')(A).
  \end{eqnarray*}
  By Remark~\ref{rem s structure}, the scheme
  \[
   P := \underline\Isom_A((\Mbf_0,\uund), (\Mbf,\tund))
  \]
  is a $G_A$-torsor and in particular smooth. Now $j$ induces an isomorphism $j_{\CRYS} \in P(A/p^m)$ which by the infinitesimal lifting criterion lifts to a section $\tilde{j}_{\CRYS} \in P(A)$. We extend $\tilde{j}_{\CRYS}$ to an isomorphism $\Mbf_0[\frac{1}{p}] \isom \Mbf[\frac{1}{p}]$.
  
 We identify $\Mbf[\frac{1}{p}] = \Mbf'[\frac{1}{p}]$ and $\Mbf_0[\frac{1}{p}] = \Mbf_0'[\frac{1}{p}]$ using the isomorphisms induced by $\varphi_{\Xcal}$ and $\varphi$. As those isogenies are compatible with crystalline Tate tensors this also identifies $\tund$ with $\tund'$ and $\uund$ with $\uund'$. In particular, $\tund'$ is $F$-equivariant.
 
  By exactness of the crystal functor (\cite{BBM82}~Cor.~4.2.8), we have
 \[
  p^{m_1} \Mbf \subset \Mbf' \subset p^{-m_2} \Mbf
 \]
 \[
  p^{m_1} \Mbf_0 \subset \Mbf_0' \subset p^{-m_2} \Mbf_0
 \]
 such that $\Mbf'$ is mapped onto $\Mbf_0'$ by $p^{m_1} \circ \tilde{j}_{\CRYS} \circ p^{-m_1} = \tilde{j}_{\CRYS}$. Thus $\tund$ factorises through $\DD(\Xcal')$ and $j_{\CRYS}$ defines an isomorphism $(\Mbf_0',\uund') \cong (\Mbf',\tund')$. In particular $(\Mbf',\tund')$ is locally isomorphic to $(M_A, \sund \otimes 1)$.
  \end{proof}
 
 \section{Leaves on deformation spaces} \label{sect leaves}
 
 In this section we mostly work with local schemes obtained by algebraising formal schemes. Let $({\rm CNLocSch}_k)$ be the full subcategory of local schemes whose objects are local schemes which are isomorphic to the spectrum of a complete Noetherian local $k$-algebra whose residue field is isomorphic to $k$. We denote an object of $({\rm CNLocSch}_k)$ as $(\Sfr, s)$ where $\Sfr$ is the scheme and $s$ the closed point of $\Sfr$. By a finite covering of $(\Sfr,s)$ we mean a surjective finite morphism $(\Sfr',s') \to (\Sfr,s)$. For any $p$-power $q$, we denote by $F_{q/k}: (\Sfr^{(q^{-1})},s) \to (\Sfr,s)$ the relative arithmetic Frobenius.
 
 \subsection{Central and isogeny leaves} \label{ss leaves}
 
  We fix an unramified local Shimura datum $(G,\bbf,-\mu)$ in the sense of \cite{kim}~Def.~2.5.10. That is, there exists a faithful $G$-representation $\Lambda$ and $b \in \bbf \cap G(W)\mu(p)G(W)$ such that $\Mbf_0 := W \otimes \Lambda^\ast$ is $b\sigma$-stable.
We choose a finite family of tensors $\sund \in \Mbf_0^\otimes$ such that $G$ is their stabilizer. We denote by $(X_0,\tund_0)$ the {\BTT} associated to the $F$-crystal with tensors $(\Mbf_0, b\sigma, \sund)$. Let $(X,\tund)$ be a {\BTT} isogenous to $(X_0,\tund_0)$; fix a quasi-isogeny $\rho: (X_0,\tund_0) \to (X,\tund)$.

 We first define the central and isogeny leaves on  the algebraisation of the mod $p$ deformation space $(\Def(X), s_{X})$  of $X$. That is, $(\Def(X),s_{X})$ represents the functor
 \begin{eqnarray*}
 ({\rm CNLocSch}_k) &\to&  ({\rm Sets})^{\rm opp} \\ 
 (\Sfr,s) &\mapsto& \{(\Xcal,\alpha) \mid \Xcal \textnormal{ is a {\BT} over } \Sfr, \alpha: X  \isom \Xcal_s \}
 \end{eqnarray*}
 Here we use that every {\BT} over the formal spectrum of a complete Noetherian $k$-algebra has a unique algebraisation by \cite{messing72}~Lemma~II.4.16 to extend the deformation problem from Artin $k$-algebras to $({\rm CNLocSch}_k)$.
 Let $(\Xcal^\univ,\alpha^{\univ})$ denote the (algebraisation  of the) universal deformation of $X_0$. The central leaf of $(\Def(X),s_{X})$ is defined as $(\Cfr(X),s_{X}) := C_{\Xcal^\univ}(X)$. We use analogous notions for the deformation space of $X_0$.
 
  Now let $\Mscr(X_0)$ be the Rapoport-Zink space associated to $X_0$ and $x \in \Mscr(X_0)(k)$ be the point corresponding to $\rho$. By the rigidity of quasi-isogenies we have a natural isomorphism between the formal neighbourhood $(\Mscr(X_0)_x^\wedge,x)$ of $x$ and $(\Def(X), s_X)$ (see for example \cite{kim}~Lemma~4.3.1). The \emph{isogeny leaf} $(\Ifr(X), s_X)$ of $(\Def(X), s_X)$ is defined as the image of the embedding \newline
  $((\Mscr(X_0)^{\rm red})_{x}^\wedge ,x) \subset (\Mscr(X_0)_x^\wedge,x) = (\Def(X), s_X)$. 
  In particular, we get a universal quasi-isogeny $\rho^\univ: X_{0, \Ifr(X)} \to \Xcal^\univ_{\Ifr(X)}$. Note that while $\rho^\univ$ does depend on the choice of $\rho$ and $X_0$, the isogeny leaf itself does not.
  
 By definition, the intersection $\Cfr(X) \cap \Ifr(X)$ parametrizes the deformations of $X$ over \emph{reduced} complete Noetherian local schemes which are geometrically fiberwise constant and isogenous to $X$. It follows from the proof of \cite{oort04}~Thm.~1.3 that these conditions imply that the deformation is in fact isomorphic to $X$. We extract the arguments from Oort's paper for the reader's convenience.

 \begin{lemma}[cf.\ \cite{oort04} \S~1.11] \label{lemma oort}
  Let $S$ be a quasi-compact reduced connected scheme over $k$ and let $\Ycal$ and $\Ycal'$ be geometrically fiberwise constant {\BT}s over $S$ together with a quasi-isogeny $\rho: \Ycal' \to \Ycal$. Let $m_1,m$ be integers such that $p^{m_1} \rho$ is an isogeny with $\ker (p^{m_1}\rho) \subset \Ycal'[p^m]$ and assume that $\Ycal'[p^m]$ is defined over $k$. Then so is $\ker (p^{m_1}\rho)$.
 \end{lemma}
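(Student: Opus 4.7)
The plan is to reduce to the case where both $\Ycal'$ and $\Ycal$ are constant over a surjective cover $\pi : T \to S$, apply rigidity of quasi-isogenies between constant {\BT}s over a reduced connected base, and then descend the resulting constancy of $\ker(p^{m_1}\rho)$ back to $S$.

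First, applying Chai's Lemma~3.3.3 (cited in the proof of Lemma~\ref{lem constant BTT}) twice---first to $\Ycal'$ and then to $\Ycal$---produces a surjective morphism $\pi : T \to S$ together with isomorphisms $\Ycal'|_T \cong Y' \times_k T$ and $\Ycal|_T \cong Y \times_k T$ for {\BT}s $Y'$, $Y$ over $k$. After restricting to an irreducible component of $T^{\mathrm{red}}$ that dominates a fixed irreducible component of $S$ one may assume $T$ is reduced and irreducible, hence connected; the case of several irreducible components of $S$ is handled by gluing, using that $S$ itself is connected.

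The crucial input is the rigidity of Hom-groups between constant {\BT}s: for {\BT}s $Y, Y'$ over $k$ and a reduced connected $k$-scheme $T$ the natural map
\[
  \Hom_k(Y', Y) \lisom \Hom_T(Y' \times_k T,\, Y \times_k T)
\]
is an isomorphism of $\ZZ_p$-modules, and likewise after inverting $p$. This reflects the fact that the contravariant Dieudonn\'e crystal of a constant {\BT} is itself constant on the crystalline site of $T$, so its Hom-crystal on a reduced connected base has the same global sections as over $k$. Applying this to $\rho|_T$ one obtains a quasi-isogeny $\rho_0 : Y' \to Y$ over $k$ with $\rho|_T = \rho_0 \times \id_T$; consequently $\pi^{\ast}\ker(p^{m_1}\rho) = \ker(p^{m_1}\rho_0) \times_k T$, which is visibly defined over $k$.

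To conclude, note that $\Ycal'[p^m]$ is by hypothesis of the form $Y_0 \times_k S$. The two trivialisations of $\Ycal'[p^m]|_T$---one induced by this hypothesis, one by $\Ycal'|_T \cong Y' \times_k T$---differ by an isomorphism $Y_0 \cong Y'[p^m]$ over $k$ (constant on $T$ by the same rigidity applied to the $p^m$-torsion of $\Ycal'$). Transporting $\ker(p^{m_1}\rho_0) \subset Y'[p^m]$ across this isomorphism yields a closed subgroup scheme $K_0 \subset Y_0$ over $k$, and comparing $\ker(p^{m_1}\rho)$ with $K_0 \times_k S$ as closed subgroup schemes of $Y_0 \times_k S$ which are flat over $S$ and have equal pullbacks along the surjection $\pi$ gives the equality $\ker(p^{m_1}\rho) = K_0 \times_k S$. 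The main obstacle is the rigidity statement for Hom between constant {\BT}s over a reduced connected $k$-scheme; once this is in hand, the rest of the argument is essentially bookkeeping in descent theory.
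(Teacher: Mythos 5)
You take a genuinely different route from the paper. The paper works directly on $S$: since $\Ycal'[p^m]\cong Y_0\times_k S$ by hypothesis, the kernel $K:=\ker(p^{m_1}\rho)$ defines a morphism from $S$ into the projective $k$-scheme $Gr$ of subgroup schemes of fixed rank (via \cite{oort04}~Lemma~2.9); the key step is that this morphism factors through a central leaf of the universal family $(Y\times Gr)/\Hcal$, which is a \emph{finite} $k$-scheme by \cite{oort04}~Lemma~1.10, so connectedness of $S$ forces the morphism to be constant. In particular no trivialising cover of $\Ycal'$ itself is ever taken. Your argument instead passes to a cover $T\to S$ trivialising both $\Ycal'$ and $\Ycal$ (Chai's Lemma~3.3.3 applied twice), uses rigidity of quasi-isogenies between constant {\BT}s, and then descends.

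There is a genuine gap in the descent step. You assert that the two trivialisations of $\Ycal'[p^m]|_T$ --- the one pulled back from the hypothesis isomorphism $\Ycal'[p^m]\cong Y_0\times_k S$, and the one obtained by restricting $\Ycal'|_T\cong Y'\times_k T$ to $p^m$-torsion --- ``differ by an isomorphism $Y_0\cong Y'[p^m]$ over $k$ (constant on $T$ by the same rigidity applied to the $p^m$-torsion of $\Ycal'$).'' But rigidity fails at finite level: the automorphism scheme of a finite commutative $p$-group scheme over $k$ is in general a positive-dimensional affine group scheme, not an \'etale one. For example $\underline{\Aut}(\alpha_p)\cong\GG_m$, so over $T=\Spec k[x,x^{-1}]$ one has $\Aut_T(\alpha_p\times_k T)=\Ocal(T)^\times\supsetneq k^\times$. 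Consequently the comparison isomorphism $Y_0\times_k T\isom Y'[p^m]\times_k T$ is a priori an arbitrary, possibly non-constant $T$-point of the finite-type $k$-scheme $\underline\Isom(Y_0,Y'[p^m])$, and transporting the constant subgroup $\ker(p^{m_1}\rho_0)\times_k T$ across it need not produce a constant subgroup of $Y_0\times_k T$. The rigidity you invoke for the \emph{full} {\BT}s $\Ycal'|_T$, $\Ycal|_T$ is correct (it reduces to constancy of morphisms of constant isocrystals over a reduced connected base, in the spirit of \cite{RR96}~Lemma~3.9), but it says nothing about the relation between the Chai trivialisation of $\Ycal'|_T$ and the \emph{given} trivialisation of $\Ycal'[p^m]$ --- which is exactly what your final ``compare and descend'' step needs. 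The paper's Grassmannian argument sidesteps this comparison entirely by never introducing a second trivialisation.
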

 \begin{proof}
 We assume without loss of generality that $\rho$ is an isogeny and denote its kernel by $K$. By \cite{oort04} Lemma~2.9 the functor
 \[
  T \mapsto \{ H \subset Y_T \mid \rank (H/T) = \rank(K/S) \}.
 \]
 is representable by a proper $k$-scheme $Gr$. Denote by $\Hcal \to Gr$ the universal object. By \cite{oort04}~Lemma~1.10 the central leaf $C_{(Y \times Gr)/\Hcal}(Y')$ is finite for any {\BT} $Y'$ over $k$. As the morphism $S \to Gr$ induced by $K$ factors through such a central leaf and $S$ is connected, the morphism factors through a single point, i.e.\ $K$ is constant.
 \end{proof}
 
 In particular, any geometrically fiberwise constant {\BT} which is isogenous to a {\BT} which is constant (i.e.\ defined over $k$) is constant itself. Thus we obtain

 \begin{lemma} \label{lem leaf intersection}
  $\Cfr(X) \cap \Ifr(X) = \{s_X\}$.
 \end{lemma}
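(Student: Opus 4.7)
The plan is to set $Z := \Cfr(X) \cap \Ifr(X)$ with the reduced induced scheme structure, and to show that the restriction of the universal deformation to $Z$ is the trivial deformation of $X$. The claim will then follow formally from the universal property of $(\Def(X), s_X)$.

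First I would record that $s_X \in Z$ — the fibre of $\Xcal^\univ$ at $s_X$ is $X$, and $\rho^\univ$ specialises to $\rho$ there. Since $\Cfr(X)$ is closed in the Newton stratum, and $\Ifr(X)$ lies in that same stratum, $Z$ is closed in $\Ifr(X)$, which is itself a complete Noetherian local $k$-scheme. Therefore $Z$ is a connected, reduced, quasi-compact $k$-scheme with unique closed point $s_X$ — exactly the setup required by Lemma~\ref{lemma oort}.

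The crux is the constancy of $\Xcal^\univ|_Z$. By construction $\Xcal^\univ|_Z$ is geometrically fibrewise constant (since $Z \subset \Cfr(X)$), and it is isogenous via $\rho^\univ|_Z$ to the constant BT $X_{0,Z}$ (since $Z \subset \Ifr(X)$). The immediate consequence of Lemma~\ref{lemma oort} highlighted in the text following its proof — that a geometrically fibrewise constant BT isogenous to a constant BT is itself constant — then produces an isomorphism $\beta: \Xcal^\univ|_Z \isom X_Z$.

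To align $\beta$ with the rigidification, I would post-compose it with the constant automorphism $(\beta|_{s_X} \circ \alpha^\univ|_{s_X})^{-1} \in \Aut(X)$ pulled back to $Z$, obtaining an isomorphism which identifies $(\Xcal^\univ|_Z, \alpha^\univ|_{s_X})$ with the trivial deformation $(X_Z, \id)$. By the universal property of $(\Def(X), s_X)$, the inclusion $Z \hookrightarrow \Def(X)$ classifying this deformation must coincide with the constant morphism to $s_X$; as $Z$ is reduced and contains $s_X$, this forces $Z = \{s_X\}$. The only substantive step is the constancy of $\Xcal^\univ|_Z$, which is exactly the content of Oort's lemma; the automorphism twist and the invocation of representability are routine.
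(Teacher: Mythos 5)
Your argument is correct and follows exactly the paper's intended route: restrict the universal deformation to the reduced intersection $Z$, invoke Lemma~\ref{lemma oort} to conclude that a geometrically fibrewise constant Barsotti--Tate group isogenous to a constant one is itself constant, and then apply the universal property to force $Z = \{s_X\}$. The only (cosmetic) slip is writing $\alpha^\univ|_{s_X}$; since $\alpha^\univ$ is by definition the rigidification $X \isom \Xcal^\univ_{s_X}$, no further restriction is needed.
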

  
 We denote by $\Def_G (X) \subset \Def (X)$ the algebraisation of the deformation space constructed by Faltings reduced modulo $p$ in \cite{faltings99}. We will recall its explicit construction in the following subsection, here we will use a moduli description (see for example \cite{moonen98} Prop.~4.9 \cite{kim} Thm.~3.6). It is the formally smooth local subscheme of $\Def(X)$ given by the following property. A morphism $f:(\Spec k\pot{x_1,\ldots,x_N}, (x_1,\ldots,x_N)) \to (\Def(X),s_{X})$ factors through $\Def_G (X)$ if and only if there exist (necessarily unique) crystalline Tate tensors $\underline{u}$ in $f^\ast \Xcal^\univ$ which lift $\tund$. We denote by $\tund^\univ$ the universal crystalline Tate tensors of $\Xcal^\univ$ over $\Def_G (X)$.
 
\begin{remark}
  Technically, the above sources only give the moduli description is only for the formal spectra of $\Spec k\pot{x_1,\ldots,x_N}$. But as noted in \cite{HP} Rem.~2.3.5(c), the proof of \cite{dJ98} Prop.~2.4.8 shows that there exists a (necessarily unique) algebraisation of the crystalline Tate tensors. Note that they induce an $\sund$-structure. Indeed, by the rigidity of crystals it suffices to check this over the closed point, where it holds true by definition. 
\end{remark} 
 
 We define the central leaf in $\Def_G(X_0)$ as $\Cfr_G(X_0) := C_{(\Xcal_0^\univ, \tund_0^\univ)}(X_0,\tund_0)$. By Lemma~\ref{lem constant BTT} we have $\Cfr_G(X_0) = \Cfr(X_0) \cap \Def_G(X_0)$.
 
 To define the isogeny leaf $\Iscr_G(X_0)$ in $\Def_G(X)$, repeat the above construction, replacing $\Mscr(X_0)$ by the Rapoport-Zink space of Hodge type constucted in \cite{kim} (see also \cite{HP}). The universal quasi-isogeny $\rho_G^\univ$ thus obtained is in particular compatible with the crystalline Tate-tensors. As in the case of the central leaf we have $\Ifr_G(X) = \Ifr(X) \cap \Def_G(X)$, which can be used as an alternative definition if one wants to avoid using Rapoport-Zink spaces of Hodge type.
 
 \subsection{Explicit coordinates for central leaves} \label{ss central leaf}
 In this section we give an explicit description of $\Cfr_G(X_0)$ in terms of Falting's construction of the versal deformation ring of $X_0$. To make the computations bearable, we restrict ourselves to certain $b$. For example, in the case of \BT s without additional structure our restriction is equivalent to the requirement for $X_0$ to be completely slope divisible. As a consequence, we show that the central leaf is formally smooth of dimension $2\langle \rho, \nu \rangle$. This statement can be deduced in greater generality (see Proposition~\ref{prop geometry of central leaves for all}).
 
 Our construction generalises part of Chai's work, where he gave an explicit description of $\Cfr(X_0)$ as a formal subgroup of the group of successive extensions of the isoclinic subquotients of $X_0$ (\cite{chai}~\S~3, \cite{chai05}~\S~7). In particular, in this case the dimension formula and formal smoothness is due to Chai.
 
  We briefly recall Falting's construction (\cite{faltings99}~\S~7, see also \cite{moonen98}~\S~4.8, \cite{kisin10} \S~1.6, \cite{kim}~\S~3.2). Denote by $U_0 \subset \GL(\Mbf_0)$ the unipotent subgroup opposite to the unipotent induced by $\mu$ and let $\hat{U}_0$ denote the completion of $U_0$ along the identity section. Consider the following objects over the ring $A_0$ of global sections of $\hat{U}_0$.
 \begin{eqnarray*}
  \Mbf_0^\univ &=& A_0 \otimes_W \Mbf_0 \\
  \Fil^1 \Mbf_0^\univ &=& A_0 \otimes_W \Fil^1 \Mbf_0 \\
  F &:=& (u_0^{-1} \cdot (1 \otimes b)) \sigma,
 \end{eqnarray*}
 where $u_0 \in \hat{U}_0(A_0)$ is the ``universal'' section. There exists a unique connection $\nabla$ on $\Mbf_0^\univ$ which is compatible with the Frobenius $F$ and the filtration $\Fil^1 \Mbf_0^\univ$ (\cite{moonen98}~\S~4.5). Then $(\Mbf_0^\univ,\Fil^1 \Mbf_0^\univ,F,\nabla)$ is a crystalline Dieudonn\'e module, which gives rise to a {\BT} $\Xcal_0$ over $A_0/p$. Faltings showed that this is a universal deformation, thus $\Spec A_0/p \cong \Def(X_0)$ and $\Xcal_0 \cong \Xcal_0^\univ$.
 
 Denote $U := U_0 \cap G$ and let $\hat{U}$ be its completion along the identity. Let $A_G$ be the ring of global sections of $\hat{U}$ and let $(\Mbf_G^\univ,\Fil^1 \Mbf_G^\univ,F,\nabla)$ be the restriction of $(\Mbf_0^\univ,\Fil^1 \Mbf_0^\univ,F,\nabla)$ to $A_G$. Then $\Def_G(X_0) := \Spec A_G$ and $\tund_0^\univ$ is given by $\tund_0^\univ = (1 \otimes \tund_0) \subset \Mbf_G^\univ$.
 
 More explicitly, let $T \subset B \subset G$ be a maximal torus and a Borel subgroup of $G$ such that $\mu \in X_\ast(T)$. We denote by $(X^*(T),R,X_*(T),R^\vee)$ be the absolute root datum associated to $T \subset G$. For $\alpha \in R$ let $U_\alpha$ be the associated root subgroup of $G_W$ and fix an isomorphism $u_\alpha: \GG_a \isom U_\alpha$. We denote
 \[
  R_\mu = \{ \alpha \in R \mid \langle \alpha,\mu \rangle <0 \}.
 \]
  Then we get an isomorphism $\prod_{\alpha\in R_\mu} U_\alpha \cong U$ by multiplication. Note that $U$ is Abelian. so we do not need to fix an order of the factors. The $u_\alpha$ induce an isomorphism
 \[
  A_G \cong W\pot{t_\alpha \mid \alpha \in R_\mu}
 \]
 Moreover the canonical section $u \in U(A_G)$ is given by $u = \prod_{\alpha\in R_\mu} u_\alpha(t_\alpha)$. 
 
 \begin{notation} \label{not csd}
 We assume that the following assumptions hold for $b$.
  \begin{assertionlist}
   \item We require that $b = \dot{w} \sigma(\mu)(p)$ where $\dot{w} \in (\Norm_G T)(W)$. In particular, we get $\nu(g) \in X_*(T)_{\QQ}$.
   \item Let $M$ be the centralizer of $\nu$ in $G$. We require that $\dot{w} \in M$, i.e.\ $\dot{w}$ is a representative of a Weyl group element $w$ of $(M,T)$.
   \item $\nu(b)$ is anti-dominant with respect to $B$.
  \end{assertionlist}
 We can always find such an element in $\bbf \cap G(W)\mu(p)G(W)$, see for example \cite{HR}~Thm.~5.1.
 \end{notation}
 In particular, we can write $X_0 = X_0^1 \oplus \ldots \oplus X_0^m$ such that $X_0^i$ isoclinic. Let $(\Mbf_0^1,\Fil^1 \Mbf_0^1) \oplus \ldots \oplus (\Mbf_0^m,\Fil^1 \Mbf_0^m)$ the induced decomposition of $(\Mbf_0, \Fil^1\Mbf_0)$.

 Let $\Cfr$ be an irreducible component of $\Cfr_G(X_0)$. Then the restriction $\Xcal_{0 |\Sfr}^\univ$ is completely slope divisible as it is completely slope divisible over the generic point (\cite{OZ04} Prop~2.3). Its isoclinic subquotients $(\Xcal_{0 |\Cfr}^\univ)^i$ are isomorphic to $X_0^i$ as any completely slope divisible {\BT} over a Henselian ring is constant by the argument of the proof of \cite{OZ04}~Prop.~3.1.
 
 As a first step, we give thus an explicit description of the locus in $\Def_G(X_0)$ where $\Xcal_0^\univ$ allows a filtration with constant isoclinic subquotients. Let
 \begin{eqnarray*}
  R_\nu &=& \{ \alpha \in R \mid \langle \alpha,\nu \rangle <0 \} \\
  R_{\mu,\nu} &=& R_\mu \cap R_\nu
 \end{eqnarray*}
 and
 \[
  \Def_{G,\nu}(X_0) := \Spec k\pot{ t_\alpha | \alpha \in R_{\mu,\nu}}.
 \]
 
 \begin{lemma} \label{lemma coordinates for extensions}
  Let $f: (\Sfr,s) \to (\Def_G (X_0),s_{X_0})$ be a morphism corresponding to a deformation $(\Ycal,\uund; \alpha)$ over $\Sfr$. Then $\Ycal$ allows a filtration $0 = \Ycal_0 \subset \Ycal_1 \subset \Ycal_2 \subset \ldots \subset \Ycal_r = \Ycal$ with constant isoclinic subquotients if and only if $f$ factorizes over $\Def_{G,\nu}(X_0)$.
 \end{lemma}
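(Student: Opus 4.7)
The plan is to match the geometric filtration condition with an explicit coordinate condition by exploiting Faltings' description of $(\Xcal_0^\univ, \tund_0^\univ)$ recalled above. Introduce the standard parabolic $P \subset G$ with Levi $M = \Cent_G(\nu)$ and unipotent radical $U_P$ whose roots are $\{\alpha \in R \mid \langle \alpha,\nu\rangle < 0\}$. Because $\nu$ is anti-dominant for $B$ and $\Mbf_0 = \bigoplus_i \Mbf_0^i$ is the $\nu$-weight decomposition (which coincides with the slope decomposition of $(\Mbf_0, b\sigma)$), $P$ is precisely the stabiliser of the ascending slope filtration $0 \subset \Mbf_0^{\leq s_1} \subset \cdots \subset \Mbf_0^{\leq s_m} = \Mbf_0$, while $M$ is the stabiliser of the induced decomposition. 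Under the product presentation $U = \prod_{\alpha \in R_\mu} U_\alpha$ one reads off $U \cap U_P = \prod_{\alpha \in R_{\mu,\nu}} U_\alpha$, and the remaining factors in $U \cap P$, namely those $U_\alpha$ with $\alpha \in R_\mu$ and $\langle \alpha,\nu\rangle = 0$, form the opposite of the $\mu$-unipotent inside $M$. In particular, factoring $f$ through $\Def_{G,\nu}(X_0)$ is equivalent to $f^{\ast}u \in (U \cap U_P)(\Ocal_\Sfr)$.

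The ``if'' direction is then a direct verification. Assuming $f^{\ast}u \in U_P(\Ocal_\Sfr)$, the Frobenius $F = u^{-1}(1\otimes b)\sigma$ stabilises each $\Mbf_0^{\leq s_i} \otimes \Ocal_\Sfr$ because $b \in M \subset P$ does; via \eqref{eq Hodge Filtration vs Frobenius} the Hodge filtration is compatible with this, and standard Dieudonn\'e theory produces a filtration of $\Ycal$ by sub-{\BT}s. The induced Frobenius on the $i$-th graded piece is $(1 \otimes b_i)\sigma$ acting on $\Mbf_0^i \otimes \Ocal_\Sfr$, because the image of $f^{\ast}u$ in $M$ is trivial, so the subquotient is the constant {\BT} $X_0^i \otimes_k \Ocal_\Sfr$.

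Conversely, given a filtration of $\Ycal$ with constant isoclinic subquotients, the uniqueness of the slope filtration of $(f^{\ast}\Mbf_G^\univ, F)$ forces it to coincide with the filtration by $\Mbf_0^{\leq s_i} \otimes \Ocal_\Sfr$, so $F$ stabilises this filtration. Combined with $b \in P$, this forces $u \in P$, which kills the coordinates $t_\alpha$ with $\langle \alpha,\nu\rangle > 0$. The heart of the argument, and the step I expect to be the main obstacle, is to upgrade the constancy of each graded piece to the vanishing of the remaining coordinates, those with $\langle \alpha,\nu\rangle = 0$. The strategy is to observe that the slope decomposition of $X_0$ is a reduction of $G$-structure to $M$ (using the tensors $\sund$ together with the quasi-cocharacter $\nu$), and then to apply Faltings' construction to $(M, \mu)$ in order to obtain a deformation space of $(X_0,\tund_0)$ viewed as an $M$-object with coordinate ring $k\pot{t_\alpha : \alpha \in R_\mu,\, \langle \alpha,\nu\rangle = 0}$. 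By versality, the image $\bar u \in M(\Ocal_\Sfr)$ of $f^{\ast}u$ classifies the tuple of graded pieces, so constancy of each subquotient forces $\bar u = 1$, hence $t_\alpha = 0$ for all $\alpha \in R_\mu$ with $\langle \alpha,\nu\rangle = 0$. Combining with the previous vanishing gives $f^{\ast}u \in (U \cap U_P)(\Ocal_\Sfr)$, completing the proof.
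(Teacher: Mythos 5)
Your ``if'' direction matches the paper's in spirit, but you have elided a necessary step: having $F = u^{-1}(1 \otimes b)\sigma$ preserve $\Mbf_0^{\leq s_i} \otimes \Ocal_\Sfr$ does not by itself make the submodule into a sub-Dieudonn\'e crystal. You also need $\nabla$-stability, which the paper obtains from \cite{kisin10} Lemma~1.4.2 applied to $M \cdot U \subset G$; without that you cannot invoke Dieudonn\'e theory to produce a sub-{\BT} of $\Ycal$.

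The ``only if'' direction has a genuine gap at the very first step. You assert that the uniqueness of the slope filtration of $(f^\ast \Mbf_G^\univ, F)$ ``forces it to coincide with the filtration by $\Mbf_0^{\leq s_i} \otimes \Ocal_\Sfr$,'' and then deduce $F$-stability of the constant filtration, i.e.\ $u \in P$. This is circular: the constant filtration $\Mbf_0^{\leq s_i} \otimes \Ocal_\Sfr$ is $F$-stable if and only if $u \in P$, so nothing identifies it as a slope filtration in the first place. Uniqueness (itself nontrivial over the non-reduced bases one must allow here) tells you there is only one slope filtration and it comes from $\Ycal$'s filtration, but it gives no reason for that filtration to be the constant one rather than a twist of it --- over $k[t]/t^n$ the two reduce to the same thing over the closed point, so a ``rigidity'' appeal does not help either. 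The paper avoids this entirely: it reduces by induction to Artinian thickenings $(\Spec k[t]/t^n,(t))$, fixes a reference lift $\Ycal_0 \in \Def_{G,\nu}(X_0)(k[t]/t^n)$, and then uses Grothendieck--Messing theory to translate the given filtration of $\Ycal$ into the statement that the unipotent element $\gamma\gamma_0^{-1}$ stabilizes the Hodge filtration pieces $\Fil^1_{\Mbf_0^i}$ and acts trivially on the subquotients. The inductive step is essential: passing to a square-zero thickening is what makes the Grothendieck--Messing dictionary bite, and it kills the $\sigma$-action on the nilpotent part.

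Your final versality argument for the $\langle\alpha,\nu\rangle = 0$ directions is an interesting intuition and you flag it as the main obstacle, but as sketched it leaves two real things to prove: that the tuple of graded pieces is actually classified by the image $\bar u \in M(\Ocal_\Sfr)$ (a compatibility of deformation problems that is not a formal consequence of Faltings' construction), and that this identification is compatible with the $G$-structure. The paper's Grothendieck--Messing computation handles both the $\langle\alpha,\nu\rangle > 0$ and $\langle\alpha,\nu\rangle = 0$ cases in one go, by showing the $U$-element acts trivially on the successive $\Fil^1$-subquotients.
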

 \begin{proof}
  Let $(\Mbf_{G,\nu},\Fil^1 M_{G,\nu},F,\nabla) \coloneqq (\Mbf_G^\univ,\Fil^1 \Mbf_G^\univ,F,\nabla) \otimes_{A_G} W\pot{t_\alpha | \alpha\in R_{\mu,\nu}}$ denote the Dieudonn\'e-module of ${\Xcal_0^{\univ}}|_{\Def_{G,\nu}(X_0)}$. The submodule $M_0^{i+1} \otimes \ldots M_0^m) \otimes_W W\pot{t_\alpha \mid \alpha \in R_{\mu,\nu}}$ is $F$-stable by construction and thus $\nabla$-stable by \cite{kisin10}~Lemma~1.4.2 (see also the errata \cite{kisin}~E.1) applied to the group $M\cdot U \subset G$. Thus the canonical projection $\Mbf_{G,\nu} \epi (\Mbf_0^1 \oplus \ldots \oplus \Mbf_0^i) \otimes_W W\pot{t_\alpha | \alpha \in R_{\mu,\nu}}$ is a morphism of Dieudonn\'e modules and thus corresponds to a filtration of ${\Xcal_0^{\univ}}|_{\Def_{G,\nu}(X_0)}$. By construction, the Dieudonn\'e module of its subquotients is isomorphic to $(\Mbf_0^i,\Fil^1 \Mbf_0^i, \dot{w}_{| \Mbf_0^i}\sigma) \otimes_W W\pot{t_\alpha|\alpha \in R_{\mu,\nu}}$ with connection uniquely determined by \cite{moonen98}~\S~4.3.2. Thus the subquotients are isomorphic to $X_0^i$ and hence indeed constant isoclinic.

  On the other hand, let $\Ycal$ be as above.  It suffices to check the assertion on infinitesimal neighbourhoods, so assume $(\Sfr,s) = (\Spec k[t]/t^n,(t))$. We denote by $\overline{f}$ and $\overline{\Ycal}$ the respective restrictions to $(\overline{\Sfr},\overline{s}) := (\Spec k[t]/t^{n-1},(t))$. By an induction argument, we may assume that $\overline{f}$ factors through $\Def_{G,\nu}(X_0)$. Let $\gamma, \overline\gamma \in U$ correspond to $f$ and $\overline{f}$, respectively. Note that $f$ factors through $\Def_{G,\nu}(X_0)$ if and only if $\gamma$ stabilises the filtration  \[ k[t]/t^n \otimes \Fil^1_{\Mbf_0^m}  \subset k[t]/t^n \otimes (\Fil^1_{\Mbf_0^m} \oplus \Fil^1_{\Mbf_0^{m-1}})  \subset \ldots \subset \Fil^1_{\Mbf_0}  \] and fixes the subquotients. 
  
  We now use Grothendieck-Messing theory on the thickening $\overline{\Ycal} \mono \Ycal$ to deduce that $f$ factorizes through $\Def_{G,\nu}(X_0)$. Fix a lift $\Ycal_0 \in \Def_{G,\nu}(X_0)(k[t]/t^n)$ and let $\gamma_0 \in U$ be the corresponding element. We identify
  \[
   \DD(\overline\Ycal)(k[t]/t^n) \cong \DD(\Ycal_0)(k[t]/t^n) \stackrel{\gamma_0}{\cong} k[t]/t^n \otimes M_0
  \]
  Using the analogous identifications for $\overline\Ycal_i$, the filtration on $\Ycal$ induces the following morphisms on the Dieudonne\'e crystal. 
  \begin{align*}
   \DD(\Ycal)(k[t]/t^n) \epi \DD(\Ycal_i)(k[t]/t^n) &\textnormal{ equals } k[t]/t^n \otimes M_0 \ \epi k[t]/t^n \otimes (M_0^1 \oplus \cdots \oplus M_0^i) \\
   \DD(\Ycal_i/\Ycal_{i+1}) \mono \DD(\Ycal_i) &\textnormal{ equals } k[t]/t^n \otimes M_0^i \mono k[t]/t^n \otimes (M_0^1 \oplus \cdots \oplus M_0^i) 
  \end{align*} 
 Note that $\Fil^1 \DD(\Ycal_0) = k[t]/t^n \otimes \Fil^1\Mbf_0 $. By an analogous argument as in \cite{kim}~Lemma~3.2.1 one deduces that $\Ycal$ corresponds via (contravariant) Grothendieck-Messing theory to the lift $\gamma\gamma_0^{-1} \cdot (k[t]/t^n \otimes \Fil^1_{\Mbf_0})$. Thus the projection $\Ycal \epi \Ycal/\Ycal_{m-1} = X_0^m \times \Sfr$ induces the embedding $k[t]/t^n \otimes \Fil^1_{\Mbf_0^m} \subset \gamma \gamma_0^{-1}(k[t]/t^n \otimes \Fil^1_{\Mbf_0})$. As $U$ is opposite to the unipotent subgroup stabilising $\Fil^\bullet_{\Mbf_0}$, this implies that $\gamma\gamma_0^{-1}$ fixes $k[t]/t^n \otimes \Fil^1_{\Mbf_0^m}$. Repeating the argument above, we obtain 
  \[ {\gamma\gamma_0^{-1}}_{|k[t]/t^n \otimes (\Fil^1_{\Mbf_0^m} \oplus \Fil^1_{\Mbf_0^{m-1}}) } \equiv 1 \mod k[t]/t^n \otimes \Fil^1_{\Mbf_0^m} .\] More generally, we obtain by induction that $\gamma\gamma_0^{-1}$ stabilizes the filtration $k[t]/t^n \otimes \Fil^1_{\Mbf_0^r}  \subset k[t]/t^n \otimes (\Fil^1_{\Mbf_0^m} \oplus \Fil^1_{\Mbf_0^{m-1}})  \subset \ldots \subset k[t]/t^n \otimes \Fil^1_{\Mbf_0}$ and acts trivially on the subquotients. As $\gamma_0$ stabilises this, thus so does $\gamma$.
 \end{proof}
 
 Let $w$ be the image of $\dot{w}$ in the Weyl group of $(G,T)$. We denote $\delta := w\sigma$. 
 
 \begin{proposition}
  $\Cfr_G(X_0) = \Spec k\pot{t_\alpha \mid \alpha \in R_C}$ where \[R_C := \{ \alpha \in R_{\mu,\nu} \mid \sum_{i=1}^{r} \langle \delta^{-i}(\alpha),\mu\rangle \leq 0 \textnormal{ for any } r> 0 \} \]
 \end{proposition}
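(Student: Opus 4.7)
My plan is to first locate $\Cfr_G(X_0)$ inside the smaller deformation space $\Def_{G,\nu}(X_0)$, and then cut it out explicitly by a twisted Frobenius-conjugation equation. For the first step: if $\Cfr$ is an irreducible component of $\Cfr_G(X_0)$, then $\Xcal_0^\univ|_\Cfr$ is completely slope divisible (as recalled above, using \cite{OZ04}~Prop.~2.3), and since its geometric fibres are all isomorphic to $X_0$ its isoclinic subquotients are constant. Hence Lemma~\ref{lemma coordinates for extensions} applies and $\Cfr \hookrightarrow \Def_{G,\nu}(X_0)$.

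Inside $\Def_{G,\nu}(X_0)$, I would characterize the central leaf as the locus where a twisted conjugation equation is solvable. At a geometric point $s$ with coordinates $(t_\alpha)_{\alpha \in R_{\mu,\nu}}$, the fibre Dieudonn\'e module has Frobenius $u(s)^{-1} b \sigma$ with $u(s) = \prod_{\alpha \in R_{\mu,\nu}} u_\alpha(t_\alpha)$ (commutative, since $\mu$ is minuscule). Then $s \in \Cfr_G(X_0)$ iff there exists $g \in G(W(\overline{\kappa(s)}))$ with
\[
  u(s) \;=\; g \cdot b \cdot \sigma(g)^{-1} \cdot b^{-1}.
\]
Using $\dot w \in M$, conjugation by $b\sigma$ permutes the root subgroups $U_\alpha$ via $\delta = w\sigma$ and rescales each coordinate by a power of $p$ given by a pairing with $\mu$. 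Writing $g = \prod u_\alpha(s_\alpha)$ and projecting the equation onto each $U_\alpha$, commutativity of $U$ decouples the system into one equation per $\delta$-orbit on $R_\mu$, which I would solve iteratively: each $s_\alpha$ becomes a formal series in the $\sigma^{-i}(t_{\delta^{-i}\alpha})$ whose $p$-adic valuations are controlled by the partial sums $\sum_{j=1}^{i}\langle\delta^{-j}\alpha,\mu\rangle$. Integrality of these coefficients forces the partial-sum inequality defining $R_C$ for every $\alpha$ with $t_\alpha \neq 0$, and conversely that inequality lets the iteration converge and produces an honest element of $G(W)$; this would yield the stated identification.

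The delicate step, I expect, is upgrading the pointwise solvability to a statement at the level of formal subschemes (closedness, reducedness, and the fact that the cumulative inequality is genuinely both necessary and sufficient in mixed characteristic, rather than an artefact of the particular $g$ one writes down). A clean way to organize this, in line with the paper's strategy of generalising Chai, is to reduce to the case without tensors via $\Cfr_G(X_0) = \Cfr(X_0) \cap \Def_G(X_0)$: Chai's explicit description of $\Cfr(X_0)$ as a formal subgroup of the group of successive self-extensions of the isoclinic subquotients of $X_0$ (\cite{chai}~\S~3, \cite{chai05}~\S~7) then provides the BT-level subscheme, and the desired statement reduces to verifying the combinatorial identity $R_\mu \cap R_C^{\mathrm{BT}} = R_C$. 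The anti-dominance of $\nu(b)$ enters precisely here, to control the $\delta$-orbit behaviour on $R_\mu$ so that the partial-sum condition is exactly the right one.
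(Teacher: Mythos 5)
Your first step and the overall shape of the argument match the paper: reduce to $\Def_{G,\nu}(X_0)$ via the preceding lemma, characterise membership in the central leaf at geometric points by the solvability in $G(W)$ of a twisted $\sigma$-conjugation equation, and then extract the defining condition on the $t_\alpha$'s by controlling $p$-adic valuations through the partial sums $\sum_{j=1}^{r}\langle\delta^{-j}\alpha,\mu\rangle$.

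There is, however, a genuine gap in the middle. You write $g=\prod u_\alpha(s_\alpha)$ and assert that ``commutativity of $U$ decouples the system into one equation per $\delta$-orbit on $R_\mu$.'' That is not correct. Any isomorphism of the two Dieudonn\'e modules must preserve the slope filtration, so after dividing off the centraliser $M$ of $\nu$ the conjugating element $g$ is taken in the unipotent radical $N=\prod_{\alpha\in R_\nu}U_\alpha$ of the standard parabolic attached to $\nu$ --- indexed by $R_\nu$, not $R_\mu$ --- and $N$ is \emph{not} abelian in general. (Only $U=\prod_{\alpha\in R_\mu}U_\alpha$ is abelian, by minusculeness of $\mu$.) Consequently, when you sort $u(\eta)=\Ad(b\sigma)(n)\cdot n^{-1}$ into root-group factors you pick up commutator contributions. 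The paper's displayed system is
\[
 \tau_\alpha = -x_\alpha + C_\alpha + p^{\langle\delta^{-1}\alpha,\mu\rangle}\,\epsilon_\alpha\, x_{\delta^{-1}\alpha}^\sigma \qquad(\alpha\in R_\nu),
\]
where $C_\alpha$ collects exactly those commutator terms, and a nontrivial portion of the proof (the induction on $\langle\alpha,\nu\rangle$) is devoted to showing $\val_p C_\alpha\ge\sum_{k=1}^r\langle\delta^{-k}\alpha,\mu\rangle$ so that the $C_\alpha$ can be safely absorbed. Without this, the iterative scheme you sketch has no control over the cross-orbit interference, and the necessity of the partial-sum inequality does not follow.

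Your proposed shortcut via $\Cfr_G(X_0)=\Cfr(X_0)\cap\Def_G(X_0)$ and a ``combinatorial identity $R_\mu\cap R_C^{\mathrm{BT}}=R_C$'' is also not how the paper proceeds, and it is not well-formed as stated: the absolute root datum of $G$ does not sit inside that of $\GL(M)$ by a root-set inclusion, so $R_\mu\cap R_C^{\mathrm{BT}}$ has no meaning, and it is not clear that Chai's coordinates on $\Cfr(X_0)\subset\Def(X_0)$ restrict to Falting's coordinates on $\Def_G(X_0)$ in a way compatible with the $t_\alpha$. The paper sidesteps this entirely by running the root-group computation directly in $G$. Finally, the ``delicate step'' you flag (pointwise versus schematic) is a non-issue here: the central leaf and $\Spec k\pot{t_\alpha\mid\alpha\in R_C}$ are both reduced closed subschemes of the formally smooth $\Def_G(X_0)$, so agreement of geometric points suffices.
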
 
 
 \begin{proof}
  Let $\eta$ be a geometric point of $\Def_{G,\nu}(X_0,\tund_0)$. Then $\eta \in \Cfr_G(X)$ if and only if there exists $g \in G(W(\eta))$ such that $gb\sigma(g)^{-1} = u(\eta)^{-1}b$. As any isomorphism respects the slope filtration, $g$ must be contained in the standard parabolic associated to $M$. Denote by $N$ the unipotent radical of this parabolic, i.e. $N = \prod_{\alpha\in R_{\nu}} U_\alpha$. Writing $g = nm$ with $n \in N(W(k(\eta))$ and $m \in M(W(k(\eta)))$, we see that we must have $mg\sigma(m)^{-1} = b$. Thus we may assume without loss of generality that $g = n \in N(W(k(\eta))$. Now
  \[
   nb\sigma(n)^{-1} = u(\eta)^{-1}b \Lra u(\eta) = \Ad(b\sigma)(n) \cdot n^{-1},
  \]
  which can be expressed as system of equations as follows. Let $w$ be the image of $\dot{w}$ in the Weyl group and $\delta := w\sigma$. We write
  \[
   n = \prod_{\alpha\in R_{\nu}} u_\alpha(x_\alpha)
  \]
  for any (fixed) order of factors which satisfies the following conditions. We demand that $\langle \alpha, \nu \rangle$ is decreasing from the left-most to the right-most factor and that the factors corresponding to any $\delta$-orbit in $R_\nu$ are consecutive.  We define $\tau_\alpha := [t_\alpha(\eta)]$ for $\alpha \in R_{\mu,\nu}$ and $\tau_\alpha = 0$ otherwise, i.e. it is the $U_\alpha$-component of $u(\eta)$. 
  
  Writing both sides of $u(\eta) = \Ad(b\sigma)(n) \cdot n^{-1}$ as products of elements of the root groups in the order discussed above, we obtain the system of equations in variables $x_\alpha$ over $W(k(\eta))$
  \begin{equation} \label{eq root group}
   \tau_\alpha = - x_\alpha + C_\alpha + p^{\langle \delta^{-1}(\alpha), \mu \rangle} \epsilon_{\alpha} x_{\delta^{-1}(\alpha)}^\sigma \textnormal{ for every } \alpha \in R_\nu
  \end{equation}
  where $\epsilon_\alpha$ and $C_\alpha$ are defined below. First note that the lemma is equivalent to the claim that the above system of equations has a solution if and only if $\tau_\alpha = 0$ for $\alpha \not\in R_C$ - as we are only interested 
in whether an $n \in N(W(k))$ with $u(\eta) = \Ad(b\sigma)(n) \cdot n^{-1}$ exists, we are only interested in the existence of solutions, not their actual value.

   The constant $\epsilon_\alpha \in W^\times$ is defined by $\Ad(\dot{w})(u_\alpha(x)) = \epsilon_{w\alpha}u_{w(\alpha)}(x)$ for any $x\in W$.
   In order to cancel root group factors on both sides of the equation $u(\eta) = \Ad(b\sigma)(n) \cdot n^{-1}$, we have to commute elements from different root groups. The commutator of these element yields an additional summand for equations corresponding to higher roots. $C_\alpha$ is the sum of all such contributions to the $U_\alpha$-factor, more precisely it is obtained from commutators $[u_\beta(x_\beta), u_\gamma(\tau_\gamma)]$ with $i\beta + j\gamma = \alpha$ for some positive integers $i,j$ (cf.~\cite{springer09}~Prop.~8.2.3).
   
 Assume that there exists a system of integral solutions $(x_\alpha)$ to the above system of equations. We show by induction on $\langle \alpha, \nu \rangle$ that the following assertions hold.
 \begin{assertionlist}
  \item $\val x_\alpha \geq \sum_{k=1}^r \langle \delta^{-k}(\alpha), \mu \rangle$ for every $r \geq 0$
  \item $\tau_\alpha = 0$ if $\sum_{k=1}^r \langle \delta^{-k}(\alpha), \mu \rangle > 0$ for some $r \geq 0$ (i.e.\ if $\alpha\not\in R_C$). As $\val_p \tau_\alpha =0$ unless $\tau_\alpha = 0$, this assertion is equivalent to $\val_p \tau_\alpha \geq \sum_{k=1}^r \langle \delta^{-k}(\alpha), \mu \rangle$.
 \end{assertionlist}
  So assume this holds true for every $\alpha' \in R_\nu$ for which $\langle \alpha', \nu \rangle < \langle \alpha, \nu \rangle$. We first show that $\val_p C_\alpha \geq \sum_{k=1}^r \langle \delta^{-k}(\alpha), \mu \rangle$ for $r \geq 0$ which will basically allow us to ignore $C_\alpha$ for the rest of this proof. Indeed, the valuation of a summand corresponding to $[u_\beta(x_\beta), u_\gamma(\tau_\gamma)]$ with $i\beta + j\gamma = \alpha$ has valuation
  \[
   i\val_p(x_\beta) + j\val_p(\tau_\gamma) \geq  i \sum_{k=1}^r \langle \delta^{-k}(\beta), \mu \rangle + j \sum_{k=1}^r \langle \delta^{-k}(\gamma), \mu \rangle \\
   = \sum_{k=1}^r \langle \delta^{-k}(\alpha), \mu \rangle.
  \]
  Now we show the above claims by induction on $r$. For $r=0$ the claims are obviously true, so assume $r > 0$ and that the assertions are satisfied for $r' < r$. Then by (\ref{eq root group})
  \[ 
   \val_p(\tau_\alpha + x_\alpha) \geq \min\{C_\alpha, \langle \delta^{-1}(\alpha),\mu\rangle+\val_p(x_{\delta^{-1}(\alpha)})\} \geq \sum_{k=1}^r \langle \delta^{-k}(\alpha), \mu \rangle.
  \]

  In the case $\langle \alpha, \mu \rangle \not= -1$, we have $\tau_\alpha =0$ and the above assertions follow. So assume $\langle \alpha, \mu \rangle = -1$. Then (\ref{eq root group}) implies
  \[
   x_\alpha^\sigma = \epsilon_{\delta(\alpha)}^{-1} \cdot p^{-\langle \alpha, \mu \rangle} \cdot (\tau_{\delta(\alpha)} + x_{\delta(\alpha)} - C_{\delta(\alpha)}),
  \]
  and thus $\val_p x_\alpha \geq -\langle \alpha, \mu \rangle = 1$. Hence if $\sum_{k=1}^r \langle \delta^{-k}(\alpha), \mu \rangle > 0$, we get $\val_p \tau_\alpha >0$ and thus $\tau_\alpha=0$.

  On the other hand, assume that $\tau_\alpha = 0$ for $\alpha \not\in R_C$. We get a set of integral solutions as follows. Let $N$ be a positive integer such that $\delta^N =1$. Then inserting (\ref{eq root group}) $N-1$ times into itself, we obtain
  \begin{eqnarray*}
   x_{\alpha}  &=&   C_{\alpha} + \sum_{k=1}^{N-1} \epsilon_{\alpha,k} p^{\langle \delta^{-k}(\alpha) + \ldots + \delta^{-1}(\alpha),\mu\rangle} (C_{\delta^{-k}(\alpha)} - \tau_{\delta^{-k}(\alpha)})^{\sigma^k} \\ & & + \epsilon_{\alpha,N}p^{\langle \alpha,N\nu \rangle} x_\alpha^{\sigma^N},
  \end{eqnarray*}
  where
  \[
   \epsilon_{\alpha,k} := \prod_{l=0}^{k-1} \epsilon_{\delta^{-l}(\alpha)}^{\sigma^l}.
  \]
  Multiplying by $p^{-\langle \alpha, N\nu\rangle}$, we get
  \begin{eqnarray*}
   p^{-\langle \alpha,N\nu \rangle}\cdot x_{\alpha}  &=&   C_{\alpha} + \sum_{k=1}^{N-1} \epsilon_{\alpha,k} p^{-\langle \alpha +\ldots +\delta^{-k+1}(\alpha),\mu\rangle} (C_{\delta^{-k}(\alpha)} - \tau_{\delta^{-k}(\alpha)})^{\sigma^k} \\ & & + \epsilon_{\alpha,N} x_\alpha^{\sigma^N}.
  \end{eqnarray*}
 As all exponents are non-negative, the standard approximation argument shows that this equation has integral solutions. For every $\delta$-orbit in $R_\nu$ fix a representative $\alpha_0$ and solve the above equation for $\alpha = \alpha_0$. We obtain $x_\alpha$ for the other elements in the $\delta$-orbit by (repeatedly) solving (\ref{eq root group}) for $x_\alpha$.
 \end{proof}
 
 Now the dimension formula for $\Cfr_G(X_0)$ follows from the lemma below.
 
 \begin{lemma}
  $\# R_C = - 2\langle \rho, \nu \rangle$, where $\rho$ denotes the half-sum of positive roots in $G$.
 \end{lemma}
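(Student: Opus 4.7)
The plan is to partition $R_\nu$ into $\delta$-orbits (possible since $\nu$ is $\delta$-fixed by Notation~\ref{not csd}) and treat $R_C$ orbit by orbit. A short computation, based on the fact that $\nu$ is the $\delta$-average of $\mu$, shows that for every $\delta$-orbit $\mathcal{O}$ of size $m_\mathcal{O}$ and every $\alpha_\mathcal{O}\in\mathcal{O}$,
\[
\sum_{\beta\in\mathcal{O}}\langle\beta,\mu\rangle \;=\; m_\mathcal{O}\,\langle\alpha_\mathcal{O},\nu\rangle.
\]
Since $\nu$ is anti-dominant one has $R_\nu\subseteq R_+$ and $\langle\alpha,\nu\rangle=0$ on $R_+\setminus R_\nu$, so the target identity can be rewritten as
\[
-2\langle\rho,\nu\rangle \;=\; \sum_{\mathcal{O}\subseteq R_\nu} m_\mathcal{O}\,\lvert\langle\alpha_\mathcal{O},\nu\rangle\rvert.
\]
It therefore suffices to prove $\#(R_C\cap\mathcal{O})=m_\mathcal{O}\,\lvert\langle\alpha_\mathcal{O},\nu\rangle\rvert$ for every orbit $\mathcal{O}\subseteq R_\nu$.

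Fix such an orbit, write $\mathcal{O}=\{\alpha_0,\dots,\alpha_{m-1}\}$ with $\alpha_{i+1}=\delta^{-1}\alpha_i$, and set $b_i:=\langle\alpha_i,\mu\rangle$. Because $\mu$ acts on $M$ with weights $0$ and $1$, its adjoint action on $\gfr$ has weights in $\{-1,0,1\}$, so each $b_i\in\{-1,0,1\}$; by the displayed identity above, $\sum b_i=-T$ with $T=m\,\lvert\langle\alpha_0,\nu\rangle\rvert>0$. In these terms, $\alpha_j\in R_C$ translates into $b_j=-1$ together with $\sum_{i=1}^{r} b_{(j+i)\bmod m}\le 0$ for every $r\ge 1$. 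The proof thus reduces to a combinatorial cycle-type claim: in any cyclic sequence $(b_0,\dots,b_{m-1})$ with $b_i\in\{-1,0,1\}$ and $\sum b_i=-T<0$, exactly $T$ indices $j$ satisfy these two conditions.

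For the combinatorial claim I would extend $b$ periodically to $\mathbb{Z}$, form the walk $W_k:=\sum_{i<k}b_i$ (which satisfies $W_{k+m}=W_k-T$), and work with the future-maximum function $F(k):=\max_{\ell\ge k}W_\ell$. The conditions on $j$ are equivalent to the conjunction \emph{``$W_{j+1}$ is a future maximum and $W_j>W_{j+1}$''}. The crux of the proof — and the point at which the minuscule hypothesis $b_i\in\{-1,0,1\}$ is genuinely used — is the observation that whenever $k<k'$ are two consecutive future maxima with $k'>k+1$, one must have $W_k=W_{k'}$: since $k+1$ is not a future maximum, $W_{k+1}<F(k+1)=W_{k'}$ forces $W_{k+1}\le W_{k'}-1$, while $b_k\ge-1$ gives $W_{k+1}\ge W_k-1\ge W_{k'}-1$, so equality holds throughout. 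Consequently every drop of $F$ occurs between two adjacent future maxima and has size exactly $1$; since $F$ drops by $T$ over each period of length $m$, there are precisely $T$ such drops per period, each of which corresponds bijectively to an element of $R_C\cap\mathcal{O}$. Summing over orbits yields the lemma.
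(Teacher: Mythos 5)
Your proposal is correct, and it shares the paper's overall strategy — partition $R_\nu$ into $\delta$-orbits, reduce to a per-orbit count using that $\langle\alpha,\mu\rangle$ and $\langle\alpha,\nu\rangle$ have the same orbit-sums, and exploit the minuscule constraint $\langle\alpha,\mu\rangle\in\{-1,0,1\}$ — but the per-orbit counting step is carried out by a genuinely different mechanism. The paper constructs an explicit ``first-passage'' bijection $\Phi:\alpha\mapsto\delta^{-r(\alpha)}(\alpha)$ (with $r(\alpha)$ the first time the partial sums $\sum_{k=1}^{r}\langle\delta^{-k}\alpha,\mu\rangle$ go positive) between the elements of the orbit with $\langle\alpha,\mu\rangle=-1$ that lie \emph{outside} $R_C$ and the elements with $\langle\alpha,\mu\rangle=1$, then obtains $\#(R_C\cap\mathcal{O})=\#\{b_j=-1\}-\#\{b_j=+1\}=-\sum_j b_j$. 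You instead track the future-maximum function $F$ of the periodic walk and show that drops of $F$ have size exactly one and correspond bijectively to elements of $R_C\cap\mathcal{O}$, hence number $T=-\sum_j b_j$ per period. These are two faces of the same cycle-lemma phenomenon, but they are different proofs: yours makes the use of $b_i\ge -1$ explicit and self-contained (the key step $W_k=W_{k'}$ for non-adjacent consecutive future maxima), whereas the paper's argument leans on the same fact implicitly — the assertion that $\Phi$ lands in $\{\langle\alpha,\mu\rangle=+1\}$, and that the set cardinality difference equals $-\sum\langle\alpha,\mu\rangle$, both silently use that the pairings are confined to $\{-1,0,1\}$. Your version is arguably slightly cleaner in isolating exactly where the minuscule hypothesis enters; the paper's bijection has the virtue of being entirely explicit and requiring no auxiliary function.
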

 \begin{proof}
  First, fix a root $\alpha_0 \in R_\nu$ and consider the set $R_C \cap \delta^\ZZ\cdot \alpha_0$. Note that $R_C \cap \delta^\ZZ\cdot \alpha_0 \subset R_{\mu,\nu} \cap \delta^\ZZ\cdot \alpha_0 = \{\alpha \in \delta^\ZZ\cdot\alpha_0\mid \langle \alpha, \mu \rangle = -1\}$. In determine the order of the complement, consider the bijection $\Phi$ given by
  \[
   \{\alpha \in \delta^\ZZ \cdot\alpha_0 \setminus R_C \mid \langle \alpha, \mu \rangle = -1\} \to \{\alpha \in \delta^\ZZ\cdot\alpha_0\mid \langle \alpha, \mu \rangle = 1\}
  \]
  \[
   \alpha \mapsto \delta^{-r(\alpha)}(\alpha)\textnormal{ with } r(\alpha) \textnormal{ minimal such that } \sum_{k=1}^{r(\alpha)} \langle \delta^{-k}(\alpha), \mu \rangle > 0.
  \]
 To prove that $\Phi$ is a bijection, we show that \[\Psi: \alpha \mapsto \delta^{r'(\alpha)} \textnormal{ with } r'(\alpha) \textnormal{ minimal such that } \sum_{k=0}^{r'(\alpha)} \langle \delta^{k}(\alpha), \mu \rangle = 0\] is its inverse. Indeed, we have $\Psi \circ \Phi (\alpha) = \alpha$, since 
 \[
  \langle\delta^{-r(\alpha)}(\alpha) + \ldots + \alpha, \mu\rangle = \langle\delta^{-r(\alpha)}(\alpha) + \ldots + \delta^{-1}(\alpha), \mu\rangle + \langle \alpha, \mu\rangle = 1 +(-1) = 0
 \]
 and if there were $r' < r(\alpha)$ such that $\sum_{k=0}^{r'} \langle \delta^{-r(\alpha)+k}(\alpha), \mu \rangle = 0$, we would obtain \newline $\sum_{k=1}^{r(\alpha)-r'} \langle \delta^{-k}(\alpha), \mu \rangle > 0$, contradicting the definition of $r(\alpha)$. A analogous argument shows $\Phi \circ \Psi = \id$. Now
  \begin{eqnarray*}
   \# R_C \cap \delta^\ZZ\cdot \alpha_0 &=& \#\{\alpha \in \delta^\ZZ\cdot\alpha_0\mid \langle \alpha, \mu \rangle = -1\} - \{\alpha \in \delta^\ZZ\cdot\alpha_0\mid \langle \alpha, \mu \rangle = 1\} \\
   &=& -\sum_{\alpha \in \delta^\ZZ\cdot\alpha_0} \langle \alpha, \mu \rangle \\
   &=& -\sum_{\alpha \in \delta^\ZZ\cdot\alpha_0} \langle \alpha, \nu \rangle.
  \end{eqnarray*}
  By summing over all orbits, we obtain
  \begin{eqnarray*}
   \# R_C &=& - \sum_{\alpha \in R_\nu} \langle \alpha, \nu \rangle \\
   &=& -\sum_{\alpha >0} \langle \alpha, \nu \rangle \\
   &=& -\langle 2\rho, \nu \rangle
  \end{eqnarray*}
 \end{proof} 
 
 As a consequence, we obtain the following proposition. 
 \begin{proposition} \label{prop geometry of central leaves}
  If the assumptions of Notation~\ref{not csd} are satisfied, $\Cfr_G(X_0)$ is formally smooth of dimension $2\langle \rho,\overline\nu(b) \rangle$.
 \end{proposition}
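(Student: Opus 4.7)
The plan is to read off the proposition directly from the two preceding results. The preceding proposition identifies $\Cfr_G(X_0)$ with $\Spec k\pot{t_\alpha \mid \alpha \in R_C}$, i.e.\ the spectrum of a formal power series ring over $k$ in $\#R_C$ indeterminates. Such a spectrum is automatically formally smooth over $k$ of Krull dimension equal to the number of indeterminates (the ring is regular and already complete, so the infinitesimal lifting criterion is satisfied term-by-term in the power series expansion). The preceding lemma then identifies this number with $-2\langle \rho, \nu\rangle$, so it only remains to match this with the Newton point normalisation appearing in the statement.

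For the sign bookkeeping, recall that by Notation~\ref{not csd}~(c), $\nu$ is \emph{anti}-dominant with respect to $B$, while $\overline\nu(b)$ is by convention the \emph{dominant} representative of the Weyl orbit of $\nu$. Hence $\overline\nu(b) = w_0\cdot\nu$, where $w_0$ denotes the longest element of the absolute Weyl group of $(G,T)$. Using the standard identity $w_0\cdot\rho = -\rho$, we obtain
\[
 \langle \rho, \overline\nu(b)\rangle = \langle \rho, w_0\cdot \nu\rangle = \langle w_0^{-1}\cdot \rho, \nu\rangle = -\langle \rho, \nu\rangle,
\]
so $-2\langle\rho,\nu\rangle = 2\langle\rho,\overline\nu(b)\rangle$, as desired.

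Since both ingredients are already at hand, there is no substantive obstacle; the only item that requires care is the sign conversion in the last display, which follows from the anti-dominance hypothesis and the behaviour of $\rho$ under $w_0$.
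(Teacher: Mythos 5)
Your proposal matches the paper's (implicit) argument exactly: the preceding proposition exhibits $\Cfr_G(X_0)$ as a power series ring in $\#R_C$ variables, hence formally smooth of that dimension, and the preceding lemma computes $\#R_C = -2\langle\rho,\nu\rangle$. The sign conversion $-\langle\rho,\nu\rangle = \langle\rho,\overline\nu(b)\rangle$ via $\overline\nu(b) = w_0\nu$ and $w_0\rho = -\rho$ is the right bookkeeping, which the paper simply leaves implicit after stating Notation~\ref{not csd}~(c).
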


 The conditions we imposed on $(X_0,\tund_0)$ are rather restrictive. However, anticipating results from later sections of this article, we can show that the above Proposition holds in greater generality. 
 
 \begin{proposition} \label{prop geometry of central leaves for all}
 Let $(X_0,\tund_0)$ be an arbitrary {\BT} with $\sund$-structure over $k$.
  \begin{subenv}
   \item We have $\dim \Cfr_G(X_0) = 2\langle \rho,\overline\nu(b) \rangle$.
   \item If $(G,\bbf,-\mu)$ is induced by a Shimura datum of Hodge type, $\Cfr_G(X_0)$ is formally smooth.
  \end{subenv}
 \end{proposition}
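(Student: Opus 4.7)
The plan is to reduce to Proposition~\ref{prop geometry of central leaves} by transporting central leaves across quasi-isogenies within the isogeny class $\bbf$. By \cite{HR}~Thm.~5.1 one can choose $(X_0',\tund_0')$ in the isogeny class of $(X_0,\tund_0)$ satisfying Notation~\ref{not csd}, so that Proposition~\ref{prop geometry of central leaves} gives $\dim\Cfr_G(X_0')=2\langle\rho,\overline\nu(b)\rangle$ with $\Cfr_G(X_0')$ formally smooth. I fix a quasi-isogeny $\psi:(X_0,\tund_0)\to(X_0',\tund_0')$ together with integers $m,m_1,m_2$ as required to apply Construction~\ref{constr isog} to $\psi$.

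For part (1), I would construct a finite surjective correspondence between $\Cfr_G(X_0)$ and $\Cfr_G(X_0')$. By Lemma~\ref{lem constant BTT} and Chai's Lemma~3.3.3 used in its proof, there is a finite surjective cover $\pi:\widetilde{\Cfr}\to\Cfr_G(X_0)$ over which $\pi^\ast\Xcal^\univ[p^m]$ is isomorphic to $X_0[p^m]\times\widetilde{\Cfr}$. Applying Proposition~\ref{prop constr} to $\psi$ via this trivialisation produces a deformation of $X_0'$ over $\widetilde{\Cfr}$ quasi-isogenous to $\pi^\ast\Xcal^\univ$, and hence a morphism $\pi':\widetilde{\Cfr}\to\Def_G(X_0')$. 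Lemmas~\ref{lem constant BTT} and~\ref{lemma oort} together force this deformation to be geometrically fiberwise constant of the correct isomorphism type, so $\pi'$ factors through $\Cfr_G(X_0')$. Running the same construction with $\psi^{-1}$ yields the reverse direction; as both arrows are finite surjective, they preserve dimension, giving $\dim\Cfr_G(X_0)=\dim\Cfr_G(X_0')=2\langle\rho,\overline\nu(b)\rangle$.

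For part (2), when $(G,\bbf,-\mu)$ is of Hodge type, the quasi-isogenies $\psi,\psi^{-1}$ correspond to elements of $J_b(\QQ_p)$ acting by formal automorphisms on the Rapoport-Zink space of Hodge type from \cite{kim}. This action identifies the formal neighbourhoods of the points corresponding to $X_0$ and $X_0'$ and preserves the central leaf locus, so the finite correspondence constructed above is in fact induced by a formal isomorphism between suitable formal neighbourhoods, and formal smoothness descends from $\Cfr_G(X_0')$ to $\Cfr_G(X_0)$.

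The main obstacle will be verifying that Proposition~\ref{prop constr} really produces a deformation whose special fibre is $X_0'$ (rather than just isogenous to it) and whose $\pi'$-image exhausts $\Cfr_G(X_0')$; this requires careful matching of trivialisations on $\widetilde{\Cfr}$ with the chosen isogeny $\psi$, possibly after passing to a further finite cover to absorb the ambiguity introduced by the $p^{m_1}$-twist in the construction. For (2), the extra delicate point is identifying the correspondence, a priori only finite, with the formal isomorphism coming from the $J_b(\QQ_p)$-action on $\Mscr_G$, which requires the Hodge-type Rapoport-Zink theory of Kim and is not available at the purely local level used in part~(1).
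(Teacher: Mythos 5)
For part~(1), your approach works in spirit, but you are essentially re-deriving (in more explicit, hands-on form) exactly the content of Proposition~\ref{prop almost product local}~(2) together with Proposition~\ref{prop properties almost product local hodge}~(2): the almost product morphism $\pi_G$ restricted to $\widetilde\Cfr_G(X_0)\times\{s_X\}$ is a finite surjection onto $\Cfr_G(X)$, which immediately gives that $\dim\Cfr_G(X)$ depends only on the isogeny class. The paper simply invokes this pre-established statement rather than reconstructing the correspondence from scratch. Your reconstruction is plausible but involves re-verifying the delicate trivialisation and transport-of-isogeny arguments that the almost-product machinery was built to encapsulate; citing the proposition directly is both shorter and safer.

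Part~(2) has a genuine gap. The $J_b(\QQ_p)$-action on the Rapoport--Zink space $\Mscr(X_0)$ sends a point $(\Xcal,\rho)$ to $(\Xcal,\rho\circ g)$, so it only changes the framing quasi-isogeny and never the isomorphism class of the underlying {\BTT}. Consequently the formal neighbourhood at the image point is again $\Def_G(\Xcal)$; there is no $g\in J_b(\QQ_p)$ carrying the point corresponding to $X_0$ to the point corresponding to a non-isomorphic $X_0'$, and the $J_b$-action cannot identify $\Def_G(X_0)$ with $\Def_G(X_0')$, let alone their central leaves. Thus the claimed ``formal isomorphism coming from the $J_b(\QQ_p)$-action'' does not exist, and smoothness cannot descend this way. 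The paper's proof of~(2) is necessarily global: by \cite{kisin}~Prop.~1.3.10 and \cite{lovering}~Thm.~3.3.12, $\Cfr_G(X_0)$ is identified with the formal neighbourhood of a closed point of a central leaf in a Shimura variety of Hodge type; all closed-point formal neighbourhoods of that leaf are mutually isomorphic by definition of the central leaf, so generic smoothness of a finite-type $k$-scheme forces smoothness everywhere. This homogeneity argument has no purely local replacement at the level of a single deformation space, which is precisely why the Shimura-variety hypothesis enters in~(2) but not in~(1).
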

 \begin{proof}
  As a consequence of Proposition~\ref{prop almost product local}~(2), the dimension of the central leaf only depends on the isogeny class of $(X_0,\tund_0)$. Thus the dimension formula follows from  Proposition~\ref{prop geometry of central leaves}.
   If $(G,b,-\mu)$ is induced by a Shimura datum of Hodge type,
 $\Def_G(X_0)$ is canonically isomorphic the formal neighbourhood of  a point in a Shimura variety of Hodge type (\cite{kisin}~Prop.~1.3.10). It is shown in \cite{lovering}~Thm.~3.3.12, that this identifies $\Cfr_G(X)$ with the formal neighbourhood of the central leaf in the Shimura variety. But central leaves in Shimura varieties are smooth, as the formal neighbourhoods of its closed points are isomorphic to each other.
 \end{proof}
  
 \subsection{Trivialisation of $p^m$-torsion} 
 
  By \cite{oort04}~Thm.~1.3 there exists a finite surjective map of schemes $\widetilde\Cfr(X_0) \to \Cfr(X_0)$ such that the pullback of $\Xcal_0^\univ[p^m]$ to $\widetilde\Cfr(X_0)$ is constant. As we will have certain compatibility conditions, we cannot use this theorem directly. Instead, we follow Oort's construction and explain why all criteria are met. Starting point is the following observation (see \cite{mantovan04}~Lemma~4.1, \cite{OZ04}~Prop.~1.3 for proofs).
  
 If $\Xcal$ is a completely slope divisible {\BT} with isoclinic subquotients $\Xcal^i$, then there exists a canonical isomorphism
 \[
  \Xcal[p^m]^{(p^N)} \cong \Xcal^1[p^m]^{(p^N)} \oplus \ldots \oplus \Xcal^n[p^m]^{(p^N)}
 \]
 for $m>0$ and $N$ big enough (depending on $m$). As these isomorphism commute with base change, we obtain a family of isomorphisms
 \[
  i_{N,m}:  F_{p^N/k}^\ast \Xcal^1[p^m] \oplus \ldots \oplus F_{p^N/k}^\ast \Xcal^n[p^m] \isom F_{p^N/k}^\ast \Xcal[p^m].
 \]
 The $\iota_{N,m}$ satisfy the obvious commutativity relations, allowing us to define
 \[
  i_\infty := \underrightarrow{\lim}_m\, \underleftarrow{\lim}_N\, \iota_{N,m} : F_{p^\infty/k}^\ast (\Xcal^1 \oplus \ldots \oplus \Xcal^n) \isom F_{p^\infty/k}^\ast \Xcal.
 \]
 
 \begin{lemma} \label{lemma covering}
  Assume that the assumptions of Notation~\ref{not csd} are satisfied.
  \begin{subenv}
   \item There exists a canonical isomorphism $\iota_{N,m}:  X_0[p^m] \times\Cfr(X_0)^{(p^{-N})} \isom F_{p^{N}/k}^\ast \Xcal_0^\univ[p^m]$ for $m > 0$ and $N$ big enough (depending on $m$) such that the obvious compatibility criteria are satisfied for varying $m,N$ and such that $\iota_{N,m, s_0} = \alpha^\univ$.
   \item $\iota_{N,m}$  identifies $\tund_0 \mod p^m$ and $\tund_0^{\univ} \mod p^m$.  
  \end{subenv}
 \end{lemma}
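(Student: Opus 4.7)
The plan is to deduce the lemma essentially from the general slope-decomposition construction recalled just before the statement, applied to the restriction of $\Xcal_0^\univ$ to the central leaf, and then to track the Tate tensors through that construction.

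First I would restrict to $\Cfr(X_0) \subset \Def_G(X_0)$. As noted in Section~\ref{ss central leaf}, under the hypotheses of Notation~\ref{not csd}, ${\Xcal_0^\univ}_{|\Cfr(X_0)}$ is completely slope divisible (via \cite{OZ04}~Prop.~2.3), and each isoclinic subquotient $(\Xcal_0^\univ)^i$ is constant and canonically identified with $X_0^i \times \Cfr(X_0)$. So the general construction recalled before the lemma yields canonical isomorphisms
\[
 \widetilde\iota_{N,m}: F_{p^N/k}^\ast\bigl(X_0^1[p^m]\oplus\cdots\oplus X_0^n[p^m]\bigr) \times \Cfr(X_0)^{(p^{-N})} \isom F_{p^N/k}^\ast \Xcal_0^\univ[p^m],
\]
compatibly for varying $N, m$. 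Precomposing with the canonical splitting $X_0 = X_0^1\oplus\cdots\oplus X_0^n$ and twisting if necessary by $\alpha^\univ$ at the closed point produces the desired $\iota_{N,m}$ satisfying $\iota_{N,m,s_0} = \alpha^\univ$. The compatibility $\iota_{N+1,m}\circ F_{p/k}^\ast = F_{p/k}^\ast \circ \iota_{N,m}$ and the reduction $\iota_{N,m+1}[p^m] = \iota_{N,m}$ are inherited from the analogous properties of the $\widetilde\iota_{N,m}$.

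For part (2), I would argue that the isomorphism $\iota_{N,m}$ automatically matches the Tate tensors modulo $p^m$. The key point is that the crystalline Tate tensors $\tund_0^\univ$ are $F$-equivariant, hence preserve the slope filtration of $\DD(\Xcal_0^\univ)[\tfrac{1}{p}]$ on the central leaf, and in fact factor through the slope-zero piece inside the tensor algebra. Consequently, after pulling back by $F_{p^N/k}$ and reducing mod $p^m$, the tensor $\tund_0^\univ \bmod p^m$ splits as a direct sum of the tensors induced on the isoclinic subquotients; since these subquotients are constant and canonically isomorphic to $X_0^i \times \Cfr(X_0)$, these summands are (by construction) the corresponding components of $\tund_0 \bmod p^m$. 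Since $\iota_{N,m}$ is obtained from the canonical slope-splitting, it therefore identifies $\tund_0 \bmod p^m$ with $\tund_0^\univ \bmod p^m$.

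The main obstacle I foresee is making the tensor argument in the previous paragraph rigorous: one has to check that the canonical construction of $\widetilde\iota_{N,m}$ (which a priori only uses the completely slope divisible structure of $\Xcal_0^\univ$) is compatible with the crystalline Tate tensor mod $p^m$. The cleanest way is to perform the check at the level of the Dieudonn\'e crystal modulo $p^m$: both $\iota_{N,m}^\ast(\tund_0^\univ \bmod p^m)$ and $\tund_0 \bmod p^m$ are crystalline Tate tensors on $X_0[p^m] \times \Cfr(X_0)^{(p^{-N})}$ which coincide at the closed point $s_0$ (by the normalization $\iota_{N,m,s_0}=\alpha^\univ$) and which, by the slope-filtration argument above, induce the same tensors on each isoclinic graded piece after Frobenius pullback. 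Since the central leaf is connected and the $\sund$-torsor $\Pbf$ is a crystal (Remark~\ref{rem s structure}(a)), these two data coincide; this reduces to an elementary rigidity argument for $F$-equivariant tensors on a completely slope divisible crystal.
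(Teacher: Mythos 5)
For part (1) your approach is essentially the same as the paper's: restrict to the central leaf, use complete slope divisibility, and compose the slope-splitting isomorphism $i_{N,m}$ with an identification of the isoclinic subquotients with $X_0^1 \oplus \cdots \oplus X_0^n$. One minor imprecision: "twisting if necessary by $\alpha^\univ$ at the closed point" is not a well-defined operation over the whole leaf. What the paper actually does is observe that $\alpha^\univ$ \emph{extends uniquely} to an isomorphism $j: X_0 \times \Cfr(X_0) \isom \bigoplus_i (\Xcal_0^\univ)^i$ (because a completely slope divisible group over a Henselian local ring is constant, so the isoclinic pieces are canonically $X_0^i \times \Cfr(X_0)$), and then sets $\iota_{N,m} = i_{N,m} \circ j|_{[p^m]}$. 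You should make this extension of $\alpha^\univ$ explicit rather than a pointwise twist.

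Part (2) is where there is a genuine gap. Your slope-filtration argument asserts that $\tund_0^\univ \bmod p^m$ "splits as a direct sum of the tensors induced on the isoclinic subquotients." This is not correct: the tensors land in the slope-zero part of $\DD(\Xcal_0^\univ)^\otimes$, but for a nontrivial slope decomposition $M = \bigoplus M^i$ the slope-zero part of $M^\otimes$ contains cross-terms such as $M^i \otimes (M^j)^\vee$ for $i \neq j$, not only the tensor algebras of the individual $M^i$. Consequently, matching "on each isoclinic graded piece" is circular: the graded piece of a tensor $\mathbf{1} \to M^\otimes$ landing in slope zero \emph{is} the whole tensor. Your fallback, "the central leaf is connected and $\Pbf$ is a crystal, hence the two data coincide," is not a complete argument either — being a crystal does not by itself give constancy of sections. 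The missing idea, which the paper uses, is to pass to the perfection $\Cfr_G(X_0)^{(p^{-\infty})}$, where the compatible system $(\iota_{N,m})$ assembles into a genuine isomorphism $\iota_\infty$ of \emph{Barsotti-Tate groups} (not just $p^m$-torsion). Over that perfect connected base, $\iota_\infty(\tund_0 \otimes 1)$ and $\tund_0^\univ$ agree at $s_{X_0}$, and the constancy/rigidity result for morphisms of $F$-(iso)crystals over such a base (\cite{RR96}~Lemma~3.5) upgrades agreement at a point to agreement everywhere; reducing mod $p^m$ then gives (2). Without the passage to the perfection and that specific rigidity input, the argument does not close.
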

 \begin{proof}
  Recall that $\Xcal_{|\Cfr(X_0)}^\univ$ is completely slope divisible and that $\alpha^\univ$ extends to a (necessarily unique) isomorphism $j$ sending $X_0 \times \Cfr(X_0)$ to the direct sum of the isoclinic subquotients of $\Xcal^\univ_{0\,|\Cfr(X)}$. Now let for $N$ big enough $\iota_{N,m} = i_{N,m} \circ j_{|X[p^m] \times \Cfr(X)}$.
  
  It suffices to show the second assertion after pulling back to  $\Cfr_G(X_0)^{(p^{-\infty})}$.
  By construction, $\Cfr_G(X_0)^{(p^{-\infty})}$ is perfect and there exists an isomorphism $\iota_\infty: X_0 \times \Cfr_G(X_0)^{(p^{-\infty})} \isom \Xcal^\univ_{0\, \Cfr_G(X_0)^{(p^{-\infty})}} $. Now $\iota_\infty(\tund_0 \otimes 1)$ and $\tund^\univ_0 \otimes 1$ are equal over $s_{X_0}$ by construction, thus they are equal over $\Cfr(X_0)^{(p^\infty)}$ by \cite{RR96}~Lemma~3.5. In particular, they are equal modulo $p^m$.
 \end{proof}  

 Similar as in the previous chapter, the assertions of Notation~\ref{not csd} are not necessary.
 
 \begin{lemma} \label{lemma covering for all}
  Let $(X_0,\tund_0)$ a {\BT} with $\sund$-structure over $k$.
   \begin{subenv}
   \item There exists a canonical isomorphism $\iota_{N,m}:  X_0[p^m] \times\Cfr(X_0)^{(p^{-N})} \isom F_{p^{N}/k}^\ast \Xcal_0^\univ[p^m]$ for $m > 0$ and $N$ big enough (depending on $m$) such that the obvious compatibility criteria are satisfied for varying $m,N$ and such that $\iota_{N,m, s_0} = \alpha^\univ$.
   \item For any $f:(\Sfr,s) \to \Cfr_G(X_0)$ with $(\Sfr,s)$ formally smooth $\iota_{N,m}$ identifies $f^\ast \tund_0 \mod p^m$ and $f^\ast \tund_0^{\univ} \mod p^m$.  
  \end{subenv}
 \end{lemma}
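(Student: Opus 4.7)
The plan is to mirror the proof of Lemma~\ref{lemma covering}, with the key task being to bypass the restrictive assumptions of Notation~\ref{not csd} used there. In that earlier proof, the essential input was that $\Xcal^\univ_0|_{\Cfr(X_0)}$ is completely slope divisible, which allowed $\alpha^\univ$ to be extended globally via the isoclinic decomposition. In the general setting this need not hold on the nose, but a sufficiently high Frobenius pullback should enjoy the weaker splitting property needed to run the same argument at a fixed $p^m$-level.

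The first step is a descent claim: for $N_0$ large (depending on $m$), the pullback $F_{p^{N_0}/k}^\ast \Xcal^\univ_0[p^m]|_{\Cfr(X_0)^{(p^{-N_0})}}$ splits as a direct sum of the $p^m$-truncated isoclinic summands $X_0^i[p^m] \times \Cfr(X_0)^{(p^{-N_0})}$, compatibly with $\alpha^\univ$ at the closed point. Over the perfection $\Cfr(X_0)^{(p^{-\infty})}$ this splitting exists: \cite{chai}~Lemma~3.3.3 together with the argument of \cite{oort04}~Thm.~1.3 produces a finite surjective cover trivializing the $p^m$-torsion, and over a perfect noetherian base such a cover is purely inseparable and hence dominated by a Frobenius twist. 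Since the existence of the splitting is a finite-presentation condition on the $p^m$-torsion, it then descends to some $\Cfr(X_0)^{(p^{-N_0})}$ by a standard noetherian argument.

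Granting the descent, the construction of $\iota_{N,m}$ proceeds exactly as in the proof of Lemma~\ref{lemma covering}~(1): extend $F_{p^{N_0}/k}^\ast \alpha^\univ = \alpha^\univ$ (canonically, since $X_0$ is defined over $k$) to the splitting $j$ on $p^m$-torsion, and compose with the canonical isomorphism $i_{N,m}$ of the completely-slope-divisible setup. Compatibility under varying $N,m$ and the normalization $\iota_{N,m,s_0} = \alpha^\univ$ follow automatically from the uniqueness of such a splitting compatible with the fibre at $s_{X_0}$.

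For part (2), the argument of the proof of Lemma~\ref{lemma covering}~(2) goes through verbatim. The assembled isomorphism $\iota_\infty : X_0 \times \Cfr_G(X_0)^{(p^{-\infty})} \isom \Xcal^\univ_{0,\Cfr_G(X_0)^{(p^{-\infty})}}$ satisfies $\iota_\infty^\ast \tund^\univ_0 = \tund_0 \otimes 1$ at the closed point by construction, and since $\Cfr_G(X_0)^{(p^{-\infty})}$ is reduced and perfect, \cite{RR96}~Lemma~3.5 forces equality of these Frobenius-equivariant tensors over the whole perfection. The formal smoothness hypothesis on $(\Sfr,s)$ ensures that the pullback of crystalline tensors modulo $p^m$ is well-defined and respects this identification. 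The main obstacle throughout is the descent claim of the second paragraph; everything else is either a direct transcription of the proof of Lemma~\ref{lemma covering} or a formal bookkeeping step.
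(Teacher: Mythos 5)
The central descent claim of your second paragraph does not hold, and the gap is the load-bearing one. You assert that, because $\Cfr(X_0)^{(p^{-\infty})}$ is perfect, the finite surjective cover from Chai/Oort trivialising the $p^m$-torsion "is purely inseparable and hence dominated by a Frobenius twist." This is false in two ways. First, finite surjective covers of a perfect scheme need not be radicial: perfect schemes admit plenty of nontrivial finite \'etale covers (e.g.\ the perfection of an Artin--Schreier cover of $\mathbb{A}^1_{\overline{\mathbb{F}}_p}$), and the Oort cover trivialising $p^m$-torsion is precisely of this type. Second, $\Cfr(X_0)^{(p^{-\infty})}$ is not Noetherian, so the "perfect Noetherian base" phrase cannot apply there, and the descent to a finite Frobenius level has no mechanism to run on. There is also a more elementary obstruction: for a non-(completely slope divisible) $X_0$, the truncation $X_0[p^m]$ does not split as $\oplus_i X_0^i[p^m]$ over $k$, so the putative splitting $F^\ast_{p^{N_0}/k}\,\Xcal^\univ_0[p^m]\cong\oplus_i X_0^i[p^m]\times\Cfr(X_0)^{(p^{-N_0})}$ already fails in the fibre over $s_{X_0}$, no matter how large $N_0$ is. Finally, even if one could force a splitting by a further cover, that would only produce existence, not the canonical isomorphism the lemma asks for.

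The source of the Frobenius twist in Lemma~\ref{lemma covering} is not the trivialisation of some torsor; it is the splitting of the slope filtration of a completely slope divisible Barsotti--Tate group. The constancy of the isoclinic subquotients comes for free from Henselianity of $\Cfr(X_0)$ and uses no cover at all. The paper handles the general $(X_0,\tund_0)$ by the standard device of \cite{OZ04}: one produces a quasi-isogeny $\Ycal\to\Xcal^\univ_{0\,|\Cfr_G(X_0)}$ with $\Ycal$ (completely) slope divisible, applies the argument of Lemma~\ref{lemma covering} to $\Ycal$ to get $\iota'_{N,m}$, and then transports the resulting trivialisation back through the quasi-isogeny at the cost of a bounded loss of level $m'$ (determined by $\ker\rho\subset\Ycal[p^{m'}]$), producing $\iota_{N,m-m'}$. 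Your plan of bypassing the complete slope divisibility hypothesis directly on $\Xcal^\univ_0$ cannot work; the quasi-isogeny replacement is essential. Your treatment of part (2) is fine as written, and the paper handles it exactly the same way.
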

 \begin{proof}
  By \cite{OZ04} Prop.~3.1 we can find a quasi-isogeny $\Ycal \to 
 \Xcal^\univ_{0\, |\Cfr_G(X_0)}$ with $\Ycal$ slope divisible. Arguing as in the proof above, we obtain a family of canonical isomorphisms $\iota_{N,m}' :Y[p^m] \times\Cfr(X_0)^{(p^{-N})} \isom F_{p^{N}/k}^\ast \Ycal[p^m]$ for a {\BT} $Y$ over $k$. If $\ker \rho \subset \Ycal[m']$, then $\iota_{N,m}'$ induces the isomorphism $\iota_{N,m-m'}$ with the wanted properties. As the second part did not use any assumptions on $(X_0,\tund_0)$, the proof is literally the same as above.
 \end{proof}
 
 \section{The almost product structure for deformation spaces} \label{sect almost product local}

 \subsection{Construction of the almost product structure}
 Now fix $m_1,m_2$ big enough such that $p^{m_1}\rho^\univ$ and $p^{m_2}\rho^{\univ\, -1}$ are isogenies and let $m = m_1+m_2$. We fix a finite covering $(\widetilde\Cfr(X_0), \tilde{s}_{X_0}) \to (\Cfr(X_0),s_{X_0})$ such that there exists an isomorphism $\iota: X_0[p^m] \times \widetilde\Cfr(X_0) \isom \Xcal_0^\univ[p^m]_{\widetilde\Cfr(X_0)}$ which is compatible with $\alpha^\univ$ and which can be lifted to $p^{m'}$-torsion points after pulling back to some finite covering for any $m' > m$.
 
 Consider the product $ (\Pfr,p) := (\widetilde\Cfr(X_0),\tilde{s}_{X_0}) \times (\Ifr(X),s_X)$ with canonical projections $\pr_1, \pr_2$. Denote by $\tilde\Xcal_0 := \pr_1^\ast \Xcal^\univ_0$ and by $\tilde\Xcal$ the image of $\tilde\rho := (\pr_2^\ast \rho^\univ)_{\tilde\Xcal_0}$. Then $\tilde\Xcal$ induces a morphism $\pi: (\Pfr,p) \to (\Mscr(X_0)_x^\wedge, x) \cong (\Def (X), s_X)$. It is easy to see that $\pi$ factorises through $\Nfr(X)$. 
 
 Of course $\pi$ depends on the choice of $\widetilde\Cfr(X_0)$, but for two choices $\widetilde\Cfr(X_0)$ and $\widetilde\Cfr(X_0)'$ such that $\widetilde\Cfr(X_0) \to \Cfr(X_0)$ factorises through $\widetilde\Cfr(X_0)'$ and the trivialisations of the $p^m$-torsion points are compatible we get a commutative diagram
 \begin{center}
  \begin{tikzcd}
   &  (\widetilde\Cfr(X_0),\tilde{s}_{X_0}) \times (\Ifr(X),s_X) \arrow{dd} \arrow{dddl} \arrow{dddr}{\pi} & \\ \\
   & (\widetilde\Cfr(X_0)',\tilde{s}_{X_0}') \times (\Ifr(X),s_X) \arrow{dl} \arrow{dr}{\pi'} & \\
   (\Cfr(X_0),s_{X_0}) & & (\Def(X),s_X)
  \end{tikzcd}
 \end{center}
 We will use this fact frequently to argue that we can enlarge $\widetilde\Cfr(X_0)$ without losing generality. 
 
  \begin{remark}
  In the case where $X_0$ is completely slope divisible and the coverings are as in Lemma~\ref{lemma covering}, $\pi$ is the restriction of Mantovan's morphism $\pi_N$ in (\cite{mantovan04}) to formal neighbourhoods.
 \end{remark} 
 
 \begin{proposition} \label{prop almost product local}
  Let $\pi$ as above.
  \begin{subenv}
   \item For a geometric point $\eta = (\eta_1,\eta_2) \in \Pfr$ the isomorphism class of $\tilde\Xcal^\univ_\eta$ coincides with the isomorphism class of $X_0/(\ker (p^m \rho^\univ))_{\eta_2}$.
   \item $\pi^{-1}(\Cfr(X)) = \widetilde\Cfr(X_0) \times \{s_X\}$ and $\pi^{-1}(\Ifr(X)) = \{\tilde{s}_{X_0}\} \times \Ifr(X)$.
   \item $\pi$ is finite and surjective.
   \item Let $(\Sfr,s) \in ({\rm CNLocSch})$ arbitrary and $P_1,P_2$ two $(\Sfr,s)$-valued points of $(\Pfr,p)$. We denote $(\Ycal_i, \alpha_i) := P_i^\ast (\widetilde\Xcal_0,\tilde\alpha)$  and $\rho_i := P_i^\ast\tilde\rho$. One has $\pi(P_1) = \pi(P_2)$ iff $(\Ycal_1,\alpha_1) = (\Ycal_2,\alpha_2)$ and $\rho_{1, \Ycal_1} = \rho_{2, \Ycal_2}$.
  \end{subenv}
 \end{proposition}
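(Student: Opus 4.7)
For Part (1), I would directly unpack Construction~\ref{constr isog}. At a geometric point $\eta=(\eta_1,\eta_2)$, the trivialization $\iota$ identifies $\tilde\Xcal_{0,\eta}[p^m]$ with $X_0[p^m]$, so the image of the transferred quasi-isogeny $\tilde\rho_\eta = ((\pr_2^\ast\rho^\univ)_{\tilde\Xcal_0,\iota})_\eta$ equals $X_0/\iota_\eta^{-1}(\ker(p^{m_1}\rho^\univ_{\eta_2}))$. Since $p^m = p^{m_2}\cdot p^{m_1}$ and multiplication by $p^{m_2}$ is an isogeny, the quotient $X_0/\ker(p^m\rho^\univ)_{\eta_2}$ is canonically isomorphic to $X_0/\ker(p^{m_1}\rho^\univ)_{\eta_2}$, yielding the claimed isomorphism class.

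I would prove Part (4) before Part (2), since the latter will use it. Part (4) follows by Yoneda applied to the moduli functor $\Def(X)$: the equality $\pi(P_1)=\pi(P_2)$ is equivalent to equality of the pulled-back deformations $P_i^\ast\tilde\Xcal$ with their natural trivializations at $s$. Unwinding Construction~\ref{constr isog}, $P_i^\ast\tilde\Xcal$ is determined by $(\Ycal_i,\alpha_i) = P_i^\ast(\tilde\Xcal_0,\tilde\alpha)$ together with the transferred quasi-isogeny $\rho_{i,\Ycal_i}$, and conversely this data can be recovered from $P_i^\ast\tilde\Xcal$ together with the factorization inherited from the construction of $\pi$. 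With Part (4) in hand, Part (2) splits into two arguments. The equality $\pi^{-1}(\Cfr(X))=\widetilde\Cfr(X_0)\times\{s_X\}$ is a direct consequence of Part (1): $\tilde\Xcal_\eta\cong X$ iff $\Xcal^\univ_{\eta_2}\cong X$ iff $\eta_2\in\Ifr(X)\cap\Cfr(X)=\{s_X\}$ by Lemma~\ref{lem leaf intersection}. For $\pi^{-1}(\Ifr(X))=\{\tilde{s}_{X_0}\}\times\Ifr(X)$, the inclusion $\supseteq$ holds because at $\eta_1=\tilde{s}_{X_0}$ the trivialization $\iota$ specializes to $\alpha^\univ$ by Lemma~\ref{lemma covering for all}, so $\tilde\Xcal$ is identified with $\Xcal^\univ|_{\Ifr(X)}$ and $\pi$ restricts to the canonical inclusion. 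For the reverse inclusion I would compare $P_1=(\eta_1,\eta_2)$ with $P_2=(\tilde{s}_{X_0},\pi(\eta_1,\eta_2))$: once $\pi(\eta_1,\eta_2)\in\Ifr(X)$, both lie in $\Pfr$ and map to the same point, so Part~(4) forces $(\Ycal_1,\alpha_1)=(\Ycal_2,\alpha_2)$; since $\Ycal_2$ is the constant deformation $X_0$, the map $\eta_1\to\Def(X_0)$ is the closed point, and as the fibre of $\widetilde\Cfr(X_0)\to\Cfr(X_0)$ over the closed point is $\{\tilde{s}_{X_0}\}$ (local morphism), this forces $\eta_1=\tilde{s}_{X_0}$.

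For Part (3), finiteness of $\pi$ follows from the observation that source and target are complete Noetherian local schemes and that the fibre over the closed point of $\Def(X)$ is the single point $(\tilde{s}_{X_0},s_X)$ (by Part (2) applied to $\Cfr(X)\cap\Ifr(X)=\{s_X\}$), so $\pi$ is quasi-finite and hence finite. Surjectivity onto $\Nfr(X)$ is a moduli-theoretic lifting: given $y\in\Nfr(X)$, the fibre $\Xcal^\univ_y$ is isogenous to $X_0$, and such a quasi-isogeny defines a point $\eta_2\in\Ifr(X)$; one then selects $\eta_1\in\widetilde\Cfr(X_0)$ lifting the point of $\Cfr(X_0)$ whose isomorphism class matches the central-leaf datum recovered from $y$, so that the transferred quasi-isogeny data recovers $y$. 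The main obstacle I expect is the reverse inclusion in Part (2) for the isogeny leaf, since $\Ifr(X)$ is defined schematically via the reduced Rapoport--Zink space rather than pointwise; the delicate point is ensuring that the compatibility between $\iota$ at $\tilde{s}_{X_0}$ and $\alpha^\univ$ from Lemma~\ref{lemma covering for all} propagates correctly through the transferred isogeny machinery of Construction~\ref{constr isog}, so that the Rapoport--Zink data on the image side cleanly match the product data on the source side.
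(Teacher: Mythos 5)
Your overall plan is right in outline, and your alternative argument for the second equality in (2) via part (4) is a valid variant of the paper's specialization argument. But there are three places where you skip over the technical core of the proof.

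\textbf{Part (1).} You write that the image of $\tilde\rho_\eta$ "equals $X_0/\iota_\eta^{-1}(\ker(p^{m_1}\rho^\univ_{\eta_2}))$". The image is $(\tilde\Xcal_0)_\eta/\iota_\eta(\ker(p^{m_1}\rho^\univ_{\eta_2}))$, with the ambient group $(\tilde\Xcal_0)_\eta$, not $X_0$. You only have an identification $\iota_\eta : X_0[p^m]\isom(\tilde\Xcal_0)_\eta[p^m]$ on $p^m$-torsion; $(\tilde\Xcal_0)_\eta$ is abstractly isomorphic to $X_0$ because $\eta_1$ lies on the central leaf, but there is no reason an isomorphism of BTs \emph{extending} $\iota_\eta$ exists, and without such an extension the two quotients need not be isomorphic. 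The paper handles this by enlarging the cover so as to trivialize $p^{m+N}$-torsion for arbitrary $N$, deducing $\tilde\Xcal_\eta[p^N]\cong (X_0/\ker(p^m\rho^\univ)_{\eta_2})[p^N]$, and then invoking \cite{oort04}~Cor.~1.7 to upgrade this to an isomorphism of BTs. This step is genuinely needed.

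\textbf{Part (4).} You treat the backward implication as a formal Yoneda consequence, saying the factorization data "can be recovered from $P_i^\ast\tilde\Xcal$". That is exactly what has to be proved. The paper sets $\varphi := \rho_2^{-1}\circ\rho_1:\Ycal_1\to\Ycal_2$, which a priori is only a quasi-isogeny between two fiberwise constant BTs with the same image, and then uses Lemma~\ref{lemma oort} (Oort's constancy-of-kernel argument) to conclude that $\ker(p^{n_1}\varphi)$ is constant; combined with $\varphi$ being an isomorphism over the closed point this forces $\ker(p^{n_1}\varphi)=X_0[p^{n_1}]$, hence $\varphi$ is an isomorphism. None of this is formal; without it you cannot conclude $(\Ycal_1,\alpha_1)=(\Ycal_2,\alpha_2)$.

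\textbf{Part (3), surjectivity.} Your sketch "lift the quasi-isogeny data" glosses over the real content: given $f:(\Sfr,s)\to(\Nfr(X),s_X)$ one must construct a quasi-isogeny $\Xcal^\univ_\Sfr\to\Ycal$ with $\Ycal$ fiberwise isomorphic to $X_0$, \emph{over all of $\Sfr$}. This is not a pointwise matter; it requires the Oort--Zink machinery (spreading out a generic quasi-isogeny over the flat locus, taking closures in the space of quasi-isogenies, descent), which is what the paper carries out following \cite{OZ04}~\S2, before further adjustment to match the prescribed isogeny at the closed point. This construction is the engine of the surjectivity proof and cannot be replaced by "one then selects $\eta_1\in\widetilde\Cfr(X_0)$".

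Finally, a small point on finiteness: "quasi-finite and hence finite" is false in general; what makes it work here is that the source is local and topologically of finite type over the complete local target, so that finiteness of the special fibre implies finiteness by EGA~I~Cor.~7.4.3 — you should cite something of that kind rather than a blanket implication.
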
 

 \begin{proof}
  For any natural number $N$ we may enlarge $\Cfr(X_0)$ to assume that the pullback of $\Xcal^\univ_{0}[p^{m+N}]$ and $X_0[p^{m+N}]$ to $\widetilde\Cfr(X_0)$ are isomorphic and thus the group schemes of $p^N$ torsion points of $\tilde\Xcal^\univ_\eta$ and $X_0/(\ker (p^m \rho^\univ))_{\eta_2}$ are isomorphic. By \cite{oort04}~Cor.~1.7 this implies that the \BT s are also isomorphic if $N$ is big enough, proving (1). 
  
  Now (1) implies that $\pi(\eta) \in \Cfr(X)$ if and only if $\eta_2 \in \Cfr(X)$, which by Lemma~\ref{lem leaf intersection} is equivalent to $\eta_2 = s_X$; this shows the first assertion of (2). Certainly $ \{\tilde{s}_{X_0}\} \times \Ifr(X) \subset \pi^{-1}(\Ifr(X))$. To show equality, it suffices to show that $\Tfr := \pi^{-1}(\Ifr(X)) \cap \widetilde\Cfr(X_0) \times \{s_X\}$ equals $\{p\}$, as $\pi^{-1}(\Ifr(X))$ is closed under specialization. Indeed, we have
  \[
   \pi( \Tfr ) \subset \Ifr(X) \cap \pi(\widetilde\Cfr(X_0) \times \{s_X\}) = \Ifr(X) \cap \Cfr(X) = \{p\},
  \]
  which implies that $\tilde\Xcal^\univ_{0,\Tfr} \cong X_0$. Hence $\Tfr \subset \pr_1^{-1}(s_{X_0}) = \{p\}$.  
  
  As consequence of (2) we see that $\pi^{-1}(s_X) = \{p\}$, in particular the restriction $\pi_{|\pi^{-1}(s_X)}$ is finite, as $(\Pfr,p)$ is topologically of finite type. By \cite{EGA1}~Cor.~7.4.3 this implies that $\pi$ is finite. To show surjectivity, fix a morphism $f: (\Sfr,s) \to (\Nfr(X),s_X)$ with $(\Sfr,s)$ integral. We will show that after replacing $(\Sfr,s)$ by a finite covering, this  $(\Sfr,s)$-point is contained in the image of $\pi$. For this we need to construct a quasi-isogeny $\varphi: \Ycal \to \Xcal^\univ_\Sfr$  where the geometric fibres of $\Ycal$ are isomorphic to $X_0$ and $\varphi_s = \rho$ after fixing an isomorphism $X_0 \cong \Ycal_s$. Now $\Ycal$ corresponds to an $(\Sfr,s)$-valued point $(\Cfr(X_0),s_{X_0})$ and after replacing $(\Sfr,s)$ by a finite covering, we may assume that there exists a lift $\tilde{y}$ to $(\widetilde{\Cfr}(X_0),s_{X_0})$. Then $\varphi$ is induced by an isogeny $\varphi_0: X \to X_{\Sfr}^\univ$ and $\pi(\tilde{y},\varphi_0) = f$.
  
   Following the construction in \S 2 of \cite{OZ04}, we obtain a quasi-isogeny $\psi: \Xcal^\univ_\Sfr \to \Ycal$ where the geometric fibres of $\Ycal$ are isomorphic to $X_0$.  We sketch the construction of $\psi$ for the reader's convenience.
  \begin{bulletlist}
   \item Let $\eta$ be the generic point of $\Sfr$. Since $\Xcal_\eta \otimes \overline{k(\eta)}$ is isogenous to $X_0 \otimes \overline{k(\eta)}$, there exists a finite extension $K$ of $k(\eta)$ such  that $\Xcal^\univ_\eta \otimes K$ is isogenous to $X_0 \otimes K$. By replacing $\Sfr$ by its normalisation in $K$, we may assume $K = k(\eta)$.
   \item Fix an isogeny $\psi_\eta: \Xcal^\univ_\eta \to X \otimes k(\eta)$. Let $U \subset \Sfr$ the flat locus of the closure of $\ker \psi_\eta$. Let $\psi_U: \Xcal^\univ_U \to \Xcal^\univ_U/\overline{\ker \psi_\eta}$.
   \item Let $\Mscr$ the scheme representing quasi-isogenies from $\Xcal^\univ_{\Sfr}$ of degree $d := \deg \psi_\eta$. Then $\psi_U$ defines a section $U \to \Mscr_U$. Taking the closure of its image we obtain a continuation of $\psi_U$ to a projective $\Sfr$-scheme $\tilde{S}$ (cf.\ \cite{OZ04}~Lemma~2.4).
   \item This isogeny descends to $\Sfr$ by the same argument as in \cite{OZ04}~Prop.~2.7.
  \end{bulletlist}
  Since $\Ycal$ is generically geometrically isomorphic to $X_0$ and has constant Newton polygon, it is geometrically fiberwise isomorphic to $X_0$ by \cite{oort04}~Prop.~2.2. Choose an isomorphism $\alpha_0: \Ycal_s \cong X_0$ and let $\rho':= \alpha_0 \circ \psi_s \circ \alpha^{\univ \, -1}$. We need to tweak $\psi$ such that $\rho' = \rho^{-1}$. So let $j:= \rho' \circ \rho$ and choose $m_1',m_2'$ such that $p^{m_1'}j$ and $p^{m_2'}j^{-1}$ are isogenies and set $m' = m_1'+m_2'$. After replacing $(\Sfr,s)$ by a finite covering, we may assume that $\Ycal[p^{m'}]$ is constant and choose an isomorphism $\Ycal[p^{m'}] \cong X_0[p^{m'}]$ which can be extended to truncations of any level after pulling back to a finite covering. Now $\varphi := \psi \circ j_\Ycal$ satisfies the conditions above.
  
  The ``if'' assertion in (4) is obvious. Now assume that $\pi(P_1) = \pi(P_2)$. We denote $\varphi := \rho_2^{-1} \circ \rho_1: \Ycal_1 \to \Ycal_2$. It suffices to show that $\varphi$ is an isomorphism. Fix $n_1$ such that $p^{n_1}\varphi$ is an isogeny and $n$ such that $\ker(p^{n_1} \varphi) \subset \Ycal_1[p^n]$. Replacing $\Sfr$ by an fppf-covering if necessary, we may assume that there is an isomorphism $\iota': X_0[p^n] \times \Sfr \to \Ycal_1[p^n]$. Then $\iota'(\ker(p^n_1 \varphi))$ is a constant subscheme of $X_0[p^n] \times \Sfr$ by Lemma~\ref{lemma oort}. Since $\varphi$ is an isomorphism over the special point, it follows that $ \iota'(\ker(p^{n_1} \varphi)) = X_0[p^{n_1}] \times \Sfr$; hence $\varphi$ is an isomorphism.
 \end{proof} 
 
 While $\Nfr(X)$ and $\Cfr(X_0) \times \Ifr(X)$ will in general not be isomorphic, we can deduce from the above proposition that they are inseperable forms of each other. Let us make this more precise.
 
 In the case where $\widetilde{\Cfr}(X_0) = \Cfr(X_0)^{(p^{-N})}$ and $\iota$ is defind as in Lemma~\ref{lemma covering for all}, denote $\pi_N := \pi$. By construction the $\pi_N$ induce the same morphism $\pi_\infty: \Pfr^{(p^{-\infty})} \to \Nfr(X)^{(p^{-\infty})}$.
 
 \begin{corollary} \label{cor perfect product local}
  $\pi_\infty$ is an isomorphism
 \end{corollary}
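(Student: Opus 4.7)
The plan is to show that $\pi_N$ is a finite, surjective, radicial morphism, so that on passing to perfections $\pi_\infty$ becomes a finite surjective universal homeomorphism between perfect $\mathbb{F}_p$-schemes, and hence an isomorphism.

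Finiteness and surjectivity of $\pi_N$ are given by Proposition~\ref{prop almost product local}~(3). For radiciality, since $\pi_N$ is a finite morphism of schemes of finite type over $k$, it suffices to verify that every geometric fibre has at most one point. Given $K \supseteq k$ algebraically closed and two $K$-points $P_1, P_2$ of $\Pfr$ with $\pi_N(P_1) = \pi_N(P_2)$, the argument of Proposition~\ref{prop almost product local}~(4) goes through verbatim in this setting (it rests on Lemma~\ref{lemma oort} together with the presence of a distinguished point at which the transferred quasi-isogeny is the identity, neither of which uses that the residue field equals $k$), yielding $(\Ycal_1, \alpha_1) = (\Ycal_2, \alpha_2)$ and $\rho_{1,\Ycal_1} = \rho_{2,\Ycal_2}$. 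The first equality shows that $\pr_1(P_1)$ and $\pr_1(P_2)$ induce the same point of $\Cfr(X_0)(K)$, and since $K$ is perfect the relative Frobenius $\widetilde{\Cfr}(X_0) = \Cfr(X_0)^{(p^{-N})} \to \Cfr(X_0)$ is bijective on $K$-valued points, forcing $\pr_1(P_1) = \pr_1(P_2)$. Consequently the trivialisation $\iota$ pulled back via $P_1$ and $P_2$ coincides, so the second equality reduces to equality of the underlying quasi-isogenies $\pr_2(P_i)^{\ast} \rho^\univ$, which by the moduli description of $\Mscr(X_0)$ gives $\pr_2(P_1) = \pr_2(P_2)$. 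Hence $P_1 = P_2$.

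Thus $\pi_N$ is a finite, surjective, radicial morphism, which is therefore a universal homeomorphism, and so is $\pi_\infty$ after passing to perfections. To conclude, I would invoke the standard fact that a universal homeomorphism between perfect $\mathbb{F}_p$-schemes is automatically an isomorphism: reducing to the affine case $\Spec B \to \Spec A$, the corresponding integral ring map $A \to B$ has the property that every $b \in B$ satisfies $b^{p^m} \in A$ for some $m$, and perfectness of $A$ then allows us to extract the unique $p^m$-th root inside $A$, showing $b \in A$ and hence $A = B$. The main point requiring care is the extension of Proposition~\ref{prop almost product local}~(4) from points valued in $({\rm CNLocSch}_k)$ to $K$-valued points with $K \supsetneq k$; this is essentially a notational matter, because the cited proof appeals only to constancy of the kernel (Lemma~\ref{lemma oort}) and to the universal quasi-isogeny being an isomorphism at a chosen point, both of which remain available in the broader setting.
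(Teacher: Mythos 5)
Your overall strategy—finite $+$ surjective $+$ radicial gives a universal homeomorphism, and a universal homeomorphism of perfect $\FF_p$-schemes is an isomorphism—is sound and is the same as the paper's after translation: the paper also reduces to a universal homeomorphism of perfect schemes via \cite{BS} Lemma~3.8. The difference is in how the universal-injectivity input is produced. The paper shows that $\pi_N$ becomes a \emph{monomorphism} after a Frobenius twist by checking on $({\rm CNLocSch}_k)$-points: both the source and target of the diagonal $\Delta\colon \Pfr \to \Pfr\times_{\Nfr(X)}\Pfr$ lie in $({\rm CNLocSch}_k)$ (the fibre product is finite over $\Nfr(X)$ and local with closed point mapping to $s_X$), so Proposition~\ref{prop almost product local}~(4), which is stated precisely for such test objects, applies directly.

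The gap in your argument is the claim that the proof of Proposition~\ref{prop almost product local}~(4) goes through ``verbatim'' for $K$-points. That proof sets $\varphi := \rho_2^{-1}\circ\rho_1\colon \Ycal_1\to\Ycal_2$, shows $\ker(p^{n_1}\varphi)$ is a constant finite subgroup scheme using Lemma~\ref{lemma oort}, and then \emph{pins down which} constant subgroup it is by evaluating at the closed point $s\in\Sfr$, where $\rho_1$ and $\rho_2$ both specialise to $\rho$ and so $\varphi_s$ is the identity. This is where the residue field $k$ and the pointed structure of $({\rm CNLocSch}_k)$ enter essentially. For $\Sfr=\Spec K$ with $K$ an algebraically closed overfield of $k$, there is no distinguished point of $\Spec K$ at which $\varphi$ is known to be an isomorphism: a $K$-point $P_i$ of $\Pfr$ in general maps to a non-closed point, does not factor through the closed point $p$ of $\Pfr$, and $\varphi$ is a single quasi-isogeny over $K$ with no specialisation available inside $\Spec K$. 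Knowing from Lemma~\ref{lemma oort} that $\ker(p^{n_1}\varphi)$ is constant is then vacuous — on $\Spec K$ every finite group scheme is constant — and gives no contradiction. Your parenthetical (``the presence of a distinguished point at which the transferred quasi-isogeny is the identity'') names precisely the missing ingredient, but the point you have in mind is the closed point of $\Pfr$, not a point of $\Spec K$, and it does not sit inside your test object. To repair the argument one would have to propagate the comparison $\pi_N(P_1)=\pi_N(P_2)$ back to a local scheme through the closed point of $\Pfr$ (essentially the diagonal device of the paper) rather than work over $\Spec K$ directly.

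Two minor further remarks: (i) Even granting the extension of (4), the conclusion you draw there gives $F_{p^N/k}\circ\pr_1\circ P_1 = F_{p^N/k}\circ\pr_1\circ P_2$ and ``$\rho_1=\rho_2$ modulo an eventual Frobenius'', which is equality only up to a Frobenius twist; for radiciality of $\pi_N$ on the nose you do need the further observation that arithmetic Frobenius is bijective on $K$-points for $K$ perfect, which you use but should be stated as part of the $K$-point argument rather than after it. (ii) Your elementary argument for ``universal homeomorphism of perfect $\FF_p$-schemes is an isomorphism'' asserts that every $b\in B$ has $b^{p^m}\in A$; this is true but is itself a nontrivial fact about universal homeomorphisms in characteristic $p$, so in the end it is no shorter than simply citing \cite{BS} Lemma~3.8 as the paper does.
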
 
 \begin{proof}
 By \cite{BS}~Lemma~3.8 any universal homeomorphism of perfect schemes is an isomorphism.  As $\pi_\infty$ is integral and surjective, it suffices to show that is a monomorphism in the category of perfect schemes over $k$, which is canonically equivalent to the localisation of the category of $k$-schemes obtained by inverting $F_{p/k}$. A morphism is a monomorphism iff its diagonal morphism is an isomorphism. As $\pi_N$ is finite, its diagonal morphisms in the categories $({\rm Sch_k})$ and $({\rm LocNoeth_k})$ coincide. Thus, it suffices to prove that $\pi_N$ is a monomorphism in $({\rm LocNoeth_k})[F_{p/k}^{-1}]$.
 
 Let $P_1, P_2$ be as in part (4) of above proposition. Now $(\Ycal_1,\alpha_1) = (\Ycal_2,\alpha_2)$ is equivalent to $F_{p^N/k}\circ \pr_1 \circ P_1 = F_{p^N/k} \circ \pr_1 \circ P_2$. Moreover the endomorphism $\iota_{P_2} \circ \iota_{P_1}$ of $\Ycal_1[p^m]$ becomes trivial after pullback to $\Sfr^{(p^{-\infty})}$ as it can be lifted to an endomorphism of $\Ycal_{1\, \Sfr^{(p^{-\infty})}}$. Thus $\rho_1 = \rho_2$ implies that $F_{p/k}^{N'}\circ \pr_2 \circ P_1 = F_{p/k}^{N'}\circ \pr_2 \circ P_2$ for $N'$ big enough. Hence for $n =\max\{N,N'\}$ we get $F_{p/k}^{n\, \ast} P_1 = F_{p/k}^{n\, \ast} P_2$. 
 \end{proof} 
 
 \subsection{Restriction to $\Def_G$}
 
 Now assume that $\iota$ identifies the pullbacks of $\tund_0^\univ {\rm mod} \,p^m$ and $\tund_0\, {\rm mod}\, p^m$ to $\widetilde\Cfr_G(X_0)$. Moreover we assume that for any $m' > m$ we can pull back to some finite covering where $\iota$ can be lifted to an isomorphism of $p^{m'}$-torsion points which is compatible with the $\sund$-structure.
 
  We denote $(\Pfr_G,p) := (\widetilde\Cfr_G(X_0),s_{X_0}) \times (\Ifr_G(X),s_X) \subset (\Pfr,p)$ and let $\pi_G$ be the restriction of $\pi$ to $(\Pfr_G,p)$.
 
 \begin{proposition} \label{prop almost product local hodge}
  The image of $\pi_G$ is $\Nfr_G(X)$.
 \end{proposition}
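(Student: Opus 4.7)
My plan is to prove the two inclusions $\pi_G(\Pfr_G) \subseteq \Nfr_G(X)$ and $\Nfr_G(X) \subseteq \pi_G(\Pfr_G)$ separately, applying Proposition~\ref{prop constr} in each direction and using Proposition~\ref{prop almost product local}(3) to lift points from $\Nfr(X)$ to $\Pfr$.

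For $\pi_G(\Pfr_G) \subseteq \Nfr_G(X)$ the plan is to invoke Proposition~\ref{prop constr} directly. On $\Pfr_G$ the deformation $\tilde\Xcal_0 = \pr_1^\ast \Xcal_0^\univ$ carries the $\sund$-structure $\pr_1^\ast \tund_0^\univ$ (since we have restricted to $\widetilde\Cfr_G(X_0)$), and $\pr_2^\ast \rho^\univ$ is a quasi-isogeny of \BTTs{} (since we have restricted to $\Ifr_G(X)$). The compatibility hypothesis imposed at the start of Section~\ref{sect almost product local} says that the trivialization $\iota$ identifies $\tund_0\ {\rm mod}\ p^m$ with $\tund_0^\univ\ {\rm mod}\ p^m$ on $\widetilde\Cfr_G(X_0)$. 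Proposition~\ref{prop constr} then equips $\tilde\Xcal = \image \tilde\rho$ with a $\sund$-structure, so $\pi_G$ factors through $\Def_G(X)$. Combined with the factorization through $\Nfr(X)$ already provided by Proposition~\ref{prop almost product local}, this yields the inclusion.

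For the reverse inclusion, I would take a morphism $f \colon (\Sfr,s) \to \Nfr_G(X)$ with $(\Sfr,s)$ integral and use Proposition~\ref{prop almost product local}(3) to lift it, after a finite covering, to $\tilde f \colon (\Sfr,s) \to \Pfr$ with $\pi \circ \tilde f = f$. The task then reduces to showing $\tilde f$ factors through the closed subscheme $\Pfr_G$, that is, that $\tilde f_1^\ast \Xcal_0^\univ$ acquires an $\sund$-structure (so that $\tilde f_1$ factors through $\widetilde\Cfr_G(X_0)$) and that $\tilde f_2^\ast \rho^\univ$ is a quasi-isogeny of \BTTs{} (so that $\tilde f_2$ factors through $\Ifr_G(X)$). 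Since $f^\ast \Xcal^\univ$ already carries an $\sund$-structure (as $f$ factors through $\Def_G(X)$) and is linked to both factors by the pulled-back quasi-isogeny $\tilde f^\ast \tilde\rho$, the tensors transfer at the level of $F$-isocrystals. Integrality of the back-transferred tensors is then obtained by running the $A$-section calculation of the proof of Proposition~\ref{prop constr} in reverse: the isomorphism $\tilde f_1^\ast \iota$ identifies $X_0[p^m] \times \Sfr$ with $\tilde f_1^\ast \Xcal_0^\univ[p^m]$ integrally, and the assumption that $\iota$ lifts compatibly to $p^{m'}$-torsion for each $m' > m$ controls the integrality at higher torsion in the limit.

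The principal obstacle I anticipate is exactly this backward transfer: Proposition~\ref{prop constr} as stated begins with integral $\sund$-structures on both $\Xcal$ and $\Ycal$ and produces integrality on $\Xcal'$, whereas here we must begin with integrality on the image deformation $\tilde\Xcal_\Sfr$ and recover it on the central-leaf factor $\tilde f_1^\ast \Xcal_0^\univ$. The integral trivialization $\iota$ built into the definition of $\widetilde\Cfr_G(X_0)$, together with the compatible lifting of $\iota$ to higher torsion, is precisely the input that lets the symmetric $A$-section argument close and pin down an $\sund$-structure on the two factor deformations.
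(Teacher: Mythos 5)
Your overall outline (prove both inclusions, apply Proposition~\ref{prop constr}, lift via Proposition~\ref{prop almost product local}(3)) matches the paper's, but there are two real gaps.

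First, in the forward direction you propose to apply Proposition~\ref{prop constr} directly over $\Pfr_G$. But that proposition requires the base to be formally smooth, and neither $\widetilde\Cfr_G(X_0)$ (a finite cover of a formally smooth scheme) nor $\Ifr_G(X)$ (the formal neighbourhood of a point in the reduced Rapoport--Zink space) is formally smooth in general. The paper circumvents this by a density argument: it reduces to points of dimension one and passes to the normalisation of their closure, which is a $\Spec k\pot{t}$-valued point, so that Proposition~\ref{prop constr} applies over that regular base. Without some such reduction your forward inclusion does not go through as written.

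Second, and more seriously, the reverse direction. You propose to take the abstract lift $\tilde f$ furnished by Proposition~\ref{prop almost product local}(3) and then show $\tilde f$ factors through $\Pfr_G$ by ``running the $A$-section argument of Proposition~\ref{prop constr} in reverse'', using that $\iota$ lifts compatibly to higher torsion. This is circular: the compatibility of $\iota$ with the $\sund$-structure is a hypothesis that is imposed \emph{on} $\widetilde\Cfr_G(X_0)$, whereas what you need to show is precisely that $\tilde f_1$ lands in $\widetilde\Cfr_G(X_0)$, i.e.\ that the tensors exist there to begin with. Moreover, the $A$-section argument does not reverse cleanly: knowing $p^{m_1}\Mbf \subset \Mbf' \subset p^{-m_2}\Mbf$ and that the tensors lie in $\Mbf'^{\otimes}$ does not, by itself, place them in $\Mbf^{\otimes}$. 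The paper instead revisits the \emph{construction} of the preimage in Proposition~\ref{prop almost product local}(3): it chooses the generic isogeny $\psi_\eta$ to be compatible with the crystalline Tate tensors, observes that $\psi_\ast(\tund^\univ)$ factors through $\DD(\Ycal)^\otimes$ because this holds at the generic point (a generic-fibre/constancy argument in the spirit of \cite{RR96}~Lemma~3.9 over the reduced irreducible base $\Spec k\pot{t}$), and then verifies that the subsequent modifications (the tweak by $j_\Ycal$) preserve this via Proposition~\ref{prop constr} after ensuring constancy of $(\Ycal[p^m],\psi_\ast\tund^\univ\bmod p^m)$. That generic-fibre integrality argument is the key missing ingredient in your sketch; the compatibility of $\iota$ alone does not supply it.
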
 
 \begin{proof}
  It follows from a simple topological argument that it suffices to check the claim on points of dimension $1$, as they are dense.
    
  So let $\eta \in \Pfr_G$ of dimension one and let $P \in (\Pfr,p)(k\pot{t},(t))$ be the normalisation of its closure. Then $\pi_G(P)$ factors through $\Nfr_G(X)$ by Proposition~\ref{prop constr}, thus $\pi_G(\eta) \in \Nfr(G)(X)$. On the other hand let $f: (\Spec k\pot{t}, (t)) \to (\Nfr_G, s_X)$. It suffices to show that the preimage of $f$ constructed in the proof of Proposition~\ref{prop almost product local} lies in $(\Pfr_G,p)$. First note that any finite extension of $k\pot{t}$ we take during the construction may be chosen such that it is again isomorphic to $ k\pot{t}$.
  
  In the first step of the construction we require that $\psi_\eta$ is compatible with crystalline Tate tensors. Then $\psi_{\ast} (\tund^\univ)$ factors over $\DD(\Ycal)^\otimes$ as this holds true in the generic fiber by construction. After replacing $\psi$ and $\Ycal$ such that we get the correct isogeny in the special fibre this still holds true by Proposition~\ref{prop constr} if we ensure that $(\Ycal[p^m], \psi_{\ast} (\tund^\univ) \mod p^m)$ is constant. This can be shown as in Lemma~\ref{lemma covering}.
  
  Thus the $(\Spec k\pot{t}, (t))$-valued point of $(\Cfr(X_0), s_{X_0})$ which is induced by $\Ycal$ is a point in  $(\Cfr_G(X_0), s_{X_0})$. Hence its lift $\tilde{y}$ is contained in $(\widetilde\Cfr_G(X_0), s_{X_0})(\Spec k\pot{t}, (t))$. Similarly, $\varphi_0$ defines a point in $(\Ifr_G(X), s_X)$ as it respects the crystalline Tate-tensors. Thus the preimage $(y,\varphi_{X_0})$ is indeed a point of $(\Pfr_G,p)$.
 \end{proof}  
 
 The following properties of $\pi_G$ are a direct consequence of the analogous properties of $\pi$.
 
 \begin{proposition} \label{prop properties almost product local hodge}
  Let $\pi_G$ as above.
  \begin{subenv}
   \item $\pi_G$ is finite and surjective. In the situation of Lemma~\ref{lemma covering for all} the perfection of $\pi_G$ is an isomorphism.
   \item $\pi_G^{-1}(\Ifr_G(X)) = \{s_{X_0}\} \times \Ifr_G(X)$ and $\pi_G^{-1}(\Cfr_G(X)) = \Cfr_G(X_0) \times \{s_X\}$. 
  \end{subenv}
 \end{proposition}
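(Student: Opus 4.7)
The plan is to deduce each statement essentially by restriction from the corresponding property of $\pi$ already established in Proposition~\ref{prop almost product local}, Corollary~\ref{cor perfect product local}, and Proposition~\ref{prop almost product local hodge}. The key observation is that $\Pfr_G$ is closed in $\Pfr$ (as a product of closed subschemes) and $\Nfr_G(X)$ is closed in $\Nfr(X)$, and that $\pi_G$ is literally the restriction of $\pi$.

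First I would handle finiteness: since $\pi$ is finite by Proposition~\ref{prop almost product local}(3) and $\pi_G = \pi|_{\Pfr_G}$ factors through the closed subscheme $\Nfr_G(X) \subset \Nfr(X)$ by Proposition~\ref{prop almost product local hodge}, finiteness of $\pi_G$ follows from finiteness of the composite $\Pfr_G \hookrightarrow \Pfr \xrightarrow{\pi} \Nfr(X)$. Surjectivity onto $\Nfr_G(X)$ is exactly Proposition~\ref{prop almost product local hodge}. For the perfection being an isomorphism in the situation of Lemma~\ref{lemma covering for all}, I would combine Corollary~\ref{cor perfect product local}, which gives that $\pi_\infty: \Pfr^{(p^{-\infty})} \isom \Nfr(X)^{(p^{-\infty})}$, with the fact that taking perfection commutes with closed immersions; the restriction of an isomorphism to a closed subscheme and its (set-theoretic) image is again an isomorphism once one observes that $\Pfr_G^{(p^{-\infty})} = \pi_\infty^{-1}(\Nfr_G(X)^{(p^{-\infty})})$, which follows from $\pi_G(\Pfr_G) = \Nfr_G(X)$ together with injectivity of $\pi_\infty$ on points.

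For statement (2), I would use the identifications $\Ifr_G(X) = \Ifr(X) \cap \Def_G(X)$ and $\Cfr_G(X) = \Cfr(X) \cap \Def_G(X)$ noted in subsection~\ref{ss leaves}, and the analogous identifications $\Ifr_G(X_0) \subset \Ifr(X_0)$ and $\Cfr_G(X_0) \subset \Cfr(X_0)$. Then
\[
 \pi_G^{-1}(\Ifr_G(X)) = \pi_G^{-1}(\Ifr(X)) \cap \Pfr_G = \pi^{-1}(\Ifr(X)) \cap \Pfr_G,
\]
and by Proposition~\ref{prop almost product local}(2) the right hand side equals $(\{\tilde{s}_{X_0}\} \times \Ifr(X)) \cap \Pfr_G = \{\tilde{s}_{X_0}\} \times \Ifr_G(X)$. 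The argument for the central leaf is identical with the roles swapped.

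I do not expect any serious obstacle here; the only point requiring some care is verifying that the perfection of $\pi_G$ is an isomorphism, where one must check that the closed subscheme $\Pfr_G^{(p^{-\infty})}$ maps isomorphically onto $\Nfr_G(X)^{(p^{-\infty})}$ under $\pi_\infty$. This is essentially formal once one has established surjectivity of $\pi_G$ onto $\Nfr_G(X)$ (from Proposition~\ref{prop almost product local hodge}) and used that $\pi_\infty$ is injective on points, so the preimage of a closed subscheme under an isomorphism equals its set-theoretic preimage.
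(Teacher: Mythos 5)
Your proposal is correct and follows exactly the approach the paper intends; the paper itself offers no proof beyond the remark that the statement is ``a direct consequence of the analogous properties of $\pi$,'' and your argument is precisely the expected unpacking of that remark via Proposition~\ref{prop almost product local}, Corollary~\ref{cor perfect product local}, Proposition~\ref{prop almost product local hodge}, and the identities $\Cfr_G = \Cfr \cap \Def_G$, $\Ifr_G = \Ifr \cap \Def_G$. The one point worth making explicit in the perfection step is that perfect schemes are reduced, so the set-theoretic equality $\Pfr_G^{(p^{-\infty})} = \pi_\infty^{-1}(\Nfr_G(X)^{(p^{-\infty})})$ upgrades to scheme-theoretic equality of closed subschemes.
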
 
 
 \section{Dimension formulas} \label{sect dimension formulas}
  
 \subsection{The Newton stratification on deformation spaces}  
  
  We keep the notation of the previous subsection and moreover denote by $b_0$ the isogeny class of $(X,\tund)$. As a consequence of Proposition~\ref{prop almost product local hodge} we obtain the equality
 \[
  \dim \Nfr_G(X) = \dim \Cfr_G(X) + \dim \Ifr_G(X).
 \] 
 Using the interpretation of $\Ifr_G(X)$ as formal neighbourhood of a point in the reduced subscheme associated to the Rapoport-Zink space of Hodge type, we know that the dimension of $\Ifr_G(X)$ is less or equal than the dimension of the Rapoport-Zink space. Thus by \cite{zhu},
 \[ 
  \dim \Ifr_G(X) \leq \langle \rho, \mu-\overline{\nu}(b_0) \rangle - \frac{1}{2} \defect_G(b_0).
 \]
 As we proved in section \ref{ss central leaf}, the dimension of the central leaf equals $2\langle \rho, \overline{\nu}(b_0) \rangle$. Thus the above inequality is equivalent to
 \[
  \dim \Nfr_G(X) \leq \langle \rho, \mu+\overline\nu(b_0) \rangle - \frac{1}{2} \defect_G(b_0).
 \] 
 This inequality can be expressed in more natural terms. For two isogeny classes $b' \leq b$, denote by $\ell[b',b]$ the maximal length of a chain $b' < b_1 < \cdots < b$ of isogeny classes. Using the corrected formula of \cite{chai00}~Thm.~7.4 (see for example \cite{hamacher15b}~Prop.~3.11, \cite{viehmann}~Thm.~3.4), the above inequality is equivalent to
 \begin{equation} \label{ineq Newton}
  \codim \Nfr_G(X) \geq \ell[b_0,b_{\rm max}],
 \end{equation}
 where $b_{\rm max} = [\mu(p)]$.
 
 Now the dimension formula and closure relations of Newton strata in $\Def_G(X)$ follow from this inequality and the purity of the Newton stratification by a formal argument given in \cite{viehmann}~Lemma~5.12.\footnote{Actually, Viehmann proves it under the assumption of strong purity. But her proof only uses topological strong purity.}
 
 \begin{proposition} \label{prop local Newton}
  For any $b_0 \leq b \leq b_{\max}$
  \begin{subenv}
   \item $\Def_G(X)^{\leq b}$ is of pure codimension $\ell[b,b_{\rm max}]$ in $\Def_G(X)$.
   \item $\Def_G(X)^{\leq b}$ is the closure of $\Def_G(X)^b$.
  \end{subenv}
  In particular, $\Def_G(X)^b$ is non-empty.
 \end{proposition}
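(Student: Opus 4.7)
The plan is to combine the codimension lower bound \eqref{ineq Newton} with the topological strong purity of Corollary~\ref{cor purity}, running the combinatorial argument of \cite{viehmann}~Lemma~5.12.

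First I would localise \eqref{ineq Newton}. For any geometric point $y$ of $\Def_G(X)$, the formal neighbourhood $\Def_G(X)_y^\wedge$ is canonically identified with the deformation space $\Def_G(\Xcal^\univ_y)$ compatibly with the Newton stratifications, so applying \eqref{ineq Newton} to $\Def_G(\Xcal^\univ_y)$ upgrades the inequality to
\[
 \codim_y \Def_G(X)^{b(y)} \geq \ell[b(y), b_{\rm max}]
\]
at every $y$, where $b(y)$ denotes the Newton class at $y$. In particular each non-empty $\Def_G(X)^b$ has local codimension at least $\ell[b, b_{\rm max}]$ everywhere, and a fortiori $\codim \Def_G(X)^{\leq b} \geq \ell[b, b_{\rm max}]$.

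Next I would invoke topological strong purity for the $F$-isocrystal with $G_{\QQ_p}$-structure on $\Def_G(X)$ provided by Lovering's construction, which puts us into the framework of Corollary~\ref{cor purity}. With both ingredients in hand, a downward induction on $b \in [b_0, b_{\rm max}]$ gives (1) and (2). The base $b = b_{\rm max}$ is immediate, since $\Def_G(X)^{\leq b_{\rm max}} = \Def_G(X)$ is irreducible of codimension $0 = \ell[b_{\rm max}, b_{\rm max}]$ and its open stratum is $\Def_G(X)^{b_{\rm max}}$. For the inductive step, fix $b$, and for each $b' \in B(G,\mu)$ covering $b$ from above, the induction hypothesis identifies $\overline{\Def_G(X)^{b'}}$ with $\Def_G(X)^{\leq b'}$, pure of codimension $\ell[b', b_{\rm max}] = \ell[b, b_{\rm max}] - 1$. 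Topological strong purity applied to each irreducible component $I$ of this closed set forces $I^{\leq b} \subseteq I$ to be empty or pure of codimension $\leq 1$ in $I$, hence of codimension $\leq \ell[b, b_{\rm max}]$ in $\Def_G(X)$. The lower bound from the localised \eqref{ineq Newton} forces equality, and the same length comparison shows that the generic Newton class of each such component is exactly $b$, i.e.\ the component lies in $\overline{\Def_G(X)^b}$. Since every irreducible component of $\Def_G(X)^{\leq b}$ must sit inside the closure of some $\Def_G(X)^{b'}$ with $b' \geq b$ and hence be absorbed into some such $I^{\leq b}$, ranging over all covering pairs $(b, b')$ produces every component of $\Def_G(X)^{\leq b}$, yielding both the codimension formula and the equality $\Def_G(X)^{\leq b} = \overline{\Def_G(X)^b}$. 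Non-emptiness of $\Def_G(X)^b$ falls out as a by-product, as the induction manufactures at least one component from each covering pair.

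The main obstacle will be the first step: one has to be sure that the deformation-theoretic construction of the almost product structure, and hence the codimension bound \eqref{ineq Newton} coming from $\dim \Ifr_G(X) \leq \langle \rho, \mu-\overline{\nu}(b_0)\rangle - \tfrac{1}{2}\defect_G(b_0)$, is genuinely local in $y$ rather than only available at the distinguished closed point $s_X$. This reduces to compatibility of Lovering's $G_{\QQ_p}$-isocrystal with the canonical identification $\Def_G(X)_y^\wedge \cong \Def_G(\Xcal^\univ_y)$ together with the uniform validity of the Rapoport-Zink dimension bound from \cite{zhu}. Once this localisation is in place, the rest is the purely poset-combinatorial manipulation on $B(G,\mu)_{[b_0, b_{\rm max}]}$ already carried out in Viehmann's lemma, using the corrected Chai length formula to ensure that covering relations drop $\ell[-, b_{\rm max}]$ by exactly one.
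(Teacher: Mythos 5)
Your high-level strategy coincides with the paper's: combine the codimension inequality~\eqref{ineq Newton}, the topological strong purity of Corollary~\ref{cor purity}, and the gradedness of $B(G,\mu)$ coming from Chai's length formula, and run the formal combinatorics to extract the dimension and closure statements. The paper simply cites \cite{viehmann}~Lemma~5.12 for this combinatorics (with the footnote observing that only topological strong purity is used), so what you have written is a fair reconstruction of that lemma. The downward induction, the identification of every component of $\Def_G(X)^{\leq b}$ as a component of some $I^{\leq b}$, and the forcing of the generic Newton point to equal $b$ are all correct.

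Where you diverge is in the first step, the ``localisation'' of \eqref{ineq Newton} to all geometric points of $\Def_G(X)$. You correctly flag it as the delicate step, but in the local setting it is in fact unnecessary, and it carries genuine subtleties (the formal neighbourhood of a non-closed point of $\Def_G(X)$ has a residue field that is neither $k$ nor algebraically closed, so identifying $\Def_G(X)_y^\wedge$ with a deformation space $\Def_G(\Xcal^\univ_y)$ requires a base change and a universality argument that would need to be spelled out). You can avoid this entirely by exploiting that $\Def_G(X)$ is the spectrum of a complete Noetherian local ring: every closed subset contains $s_X$, so for any irreducible component $Z$ of $\Def_G(X)^{\leq b}$ with generic Newton point $c \leq b$ the set $Z^{\leq b_0} = Z \cap \Nfr_G(X)$ is non-empty. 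Iterating topological strong purity along a saturated chain from $c$ down to $b_0$ inside $Z$ gives a component of $Z^{\leq b_0}$ of codimension $\leq \ell[b_0,c]$ in $Z$; since it lies in $\Nfr_G(X)$, inequality~\eqref{ineq Newton} (at $s_X$ only) gives codimension $\geq \ell[b_0,b_{\max}]$ in $\Def_G(X)$, whence $\codim Z \geq \ell[b_0,b_{\max}] - \ell[b_0,c] = \ell[c,b_{\max}] \geq \ell[b,b_{\max}]$. This replaces your localised inequality and removes the obstacle you identified; the rest of your induction then runs unchanged.
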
 
 
 \subsection{Global results}
 We obtain the following global results as easy corollaries to our considerations above.
 
 \begin{corollary}
  The underlying reduced subscheme of a Rapoport-Zink space of Hodge type $\Mscr_{G,\mu}(b)$ is equidimensional.
 \end{corollary}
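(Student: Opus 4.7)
The plan is to show that at every $k$-point $x$ of $\Mscr_{G,\mu}(b)^{\rm red}$ the completed local ring has the same Krull dimension $d$, and then deduce equidimensionality by a standard argument for reduced schemes locally of finite type over $k$.

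The key dimension computation proceeds as follows. By the construction of the isogeny leaf in \S\ref{ss leaves}, for any $x \in \Mscr_{G,\mu}(b)(k)$ corresponding to a {\BTT} $(X,\tund)$ in the isogeny class $b$ I have the identification $(\Mscr_{G,\mu}(b)^{\rm red})_x^\wedge \cong \Ifr_G(X)$. The almost product morphism $\pi_G: \widetilde\Cfr_G(X) \times \Ifr_G(X) \to \Nfr_G(X)$ of Proposition~\ref{prop properties almost product local hodge}(a) is finite and surjective, so dimensions transfer across it; since a product of two complete noetherian local $k$-algebras has Krull dimension equal to the sum, I read off
\[
 \dim \Ifr_G(X) \;=\; \dim \Nfr_G(X) - \dim \Cfr_G(X).
\]
Substituting $\dim \Nfr_G(X) = \langle \rho, \mu + \overline{\nu}(b) \rangle - \tfrac{1}{2}\defect_G(b)$ from Theorem~\ref{thm main}(1) applied to $\bbf = \bbf_0 = b$, together with $\dim \Cfr_G(X) = 2\langle \rho, \overline{\nu}(b) \rangle$ from Proposition~\ref{prop geometry of central leaves for all}(a), yields
\[
 d \;:=\; \dim \Ifr_G(X) \;=\; \langle \rho, \mu - \overline{\nu}(b) \rangle - \tfrac{1}{2}\defect_G(b),
\]
a quantity that depends only on $(G,\mu,b)$ and not on $x$.

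The final step is topological. Since the Krull dimension of a noetherian local ring agrees with that of its completion, $\dim \Ocal_{\Mscr^{\rm red}, x} = d$ for every closed point $x$. Because $\Mscr_{G,\mu}(b)^{\rm red}$ is reduced and locally of finite type over $k$, for any irreducible component $Z$ I may choose a closed point $x$ lying in $Z$ but in no other component; at such a point $\dim Z = \dim \Ocal_{Z,x} = \dim \Ocal_{\Mscr^{\rm red}, x} = d$. Hence every irreducible component has dimension $d$, establishing equidimensionality.

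The main subtlety is ensuring that dimension passes cleanly across both $\pi_G$ (finiteness plus additivity in the product of complete local factors) and the completion at $x$; both points are routine, so once the dimension formulas for the central leaf and the minimal Newton stratum are in hand, the argument is essentially formal. The heavy lifting therefore lies entirely in the earlier sections, in particular in Theorem~\ref{thm main}(1) and the almost product structure of Proposition~\ref{prop properties almost product local hodge}.
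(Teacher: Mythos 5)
Your proof is correct and follows essentially the same route as the paper: identify the formal neighbourhood of a closed point of $\Mscr_{G,\mu}(b)^{\mathrm{red}}$ with an isogeny leaf, use the finite surjective almost product morphism to read off $\dim \Ifr_G(X) = \dim \Nfr_G(X) - \dim \Cfr_G(X)$, substitute the two dimension formulas, and observe that the result is independent of $x$. You are a bit more explicit than the paper on the final step (choosing a closed point on a single component and using that completion preserves Krull dimension), which is a welcome clarification of an argument the paper leaves implicit. One minor notational slip: the central leaf factor in the almost product is $\widetilde\Cfr_G(X_0)$ for the fixed $(X_0,\tund_0)$ rather than $\widetilde\Cfr_G(X)$; this is harmless because Proposition~\ref{prop geometry of central leaves for all} makes the central leaf dimension depend only on the isogeny class.
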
 
 \begin{proof}
  The formal neighbourhood of a closed point in the underlying reduced subscheme of a Rapoport-Zink space of Hodge type is isomorphic to an isogeny leaf $\Ifr_G(X)$(cf.~section~\ref{ss leaves}). Hence it is of dimension
 \[
  \dim \Ifr_G(X) = \dim \Nfr_G(X,\tund) - \dim \Cfr_G(X) = \langle \rho, \mu-\nu \rangle - \frac{1}{2} \defect_G(b),
 \]  
 which coincides with the dimension of underlying reduced subscheme of the Rapoport-Zink space.
 \end{proof}
 
 Let $\Sscr_{G,0}$ be the special fibre of the canonical model of a Shimura variety of Hodge type. 
 By construction, the formal neighbourhood of any point $x \in \Sscr_{G,0}(\FFbar_p)$ is canonically isomorphic to some deformation space $\Def_G(\Acal_x[p^\infty])$ and the isomorphism preserves the Newton stratification (see \cite{lovering}~Thm.~3.3.12). Thus we obtain the following corollary.
 
 \begin{corollary}
  Let $b \in B(G,\mu)$.
  \begin{subenv}
   \item $\Sscr_{G,0}^{\leq b}$ is of pure codimension $\ell[b,b_{\rm max}]$ in $\Sscr_{G,0}$ .
   \item $\Sscr_{G,0}^{\leq b}$ is the closure of $\Sscr_{G,0}^b$.
  \end{subenv}
 \end{corollary}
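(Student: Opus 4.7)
The plan is to deduce this corollary directly from the local Proposition~\ref{prop local Newton} via the compatibility between the formal neighbourhoods of $\Sscr_{G,0}$ and the deformation spaces $\Def_G$. For any closed point $x \in \Sscr_{G,0}(\overline{\FF}_p)$, the preceding discussion furnishes a canonical isomorphism $\Sscr_{G,0,x}^\wedge \cong \Def_G(\Acal_x[p^\infty])$ identifying the Newton stratifications. Thus for every $b' \in B(G,\mu)$, the formal germ of $\Sscr_{G,0}^{\leq b'}$ at $x$ is identified with $\Def_G(\Acal_x[p^\infty])^{\leq b'}$, and likewise for the strict stratum $\Sscr_{G,0}^{b'}$. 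This sets up the local-to-global transfer.

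For assertion (1), I would use that pure codimension of a closed subscheme in a regular ambient scheme is detected on formal completions at closed points. Since $K_p$ is hyperspecial, $\Sscr_{G,0}$ is smooth and in particular regular and equidimensional. At any $x \in \Sscr_{G,0}^{\leq b}$, the isogeny class $b_x$ of $\Acal_x[p^\infty]$ satisfies $b_x \leq b$, so Proposition~\ref{prop local Newton}(1) applies and shows that $\Def_G(\Acal_x[p^\infty])^{\leq b}$ is of pure codimension $\ell[b,b_{\max}]$. Transporting this to $\Sscr_{G,0,x}^\wedge$, we get the corresponding local pure codimension, and varying $x$ over all closed points of $\Sscr_{G,0}^{\leq b}$ yields the global statement.

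For assertion (2), one inclusion $\overline{\Sscr_{G,0}^b} \subseteq \Sscr_{G,0}^{\leq b}$ is automatic from the closedness of $\Sscr_{G,0}^{\leq b}$ (the lower-semicontinuity of the Newton point). For the reverse inclusion, fix $x \in \Sscr_{G,0}^{\leq b}$. By Proposition~\ref{prop local Newton}(2), the germ $\Def_G(\Acal_x[p^\infty])^{\leq b}$ is the closure of $\Def_G(\Acal_x[p^\infty])^b$; hence the base point $x$ is a specialisation of points in the $b$-stratum of the formal neighbourhood. Pulling this back to $\Sscr_{G,0}$ shows $x \in \overline{\Sscr_{G,0}^b}$.

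The only nontrivial input, which requires no more than a careful citation, is the assertion that the identification $\Sscr_{G,0,x}^\wedge \cong \Def_G(\Acal_x[p^\infty])$ respects Newton stratifications; this is \cite{lovering}~Thm.~3.3.12 combined with Kisin's construction of the integral canonical model. Granting this, both statements reduce at once to the local results already established, and no additional global argument is needed.
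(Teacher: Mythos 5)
Your argument follows the paper's proof exactly: the corollary is deduced from Proposition~\ref{prop local Newton} by transporting it through the identification $\Sscr_{G,0,x}^\wedge \cong \Def_G(\Acal_x[p^\infty])$ of \cite{lovering}~Thm.~3.3.12, which respects Newton stratifications. You simply supply the (correct, and genuinely needed but routine) commutative-algebra details — smoothness of $\Sscr_{G,0}$ at hyperspecial level so that pure codimension is detected on completions, and flatness of completion to descend the closure relation — which the paper leaves implicit.
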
 

 \providecommand{\bysame}{\leavevmode\hbox to3em{\hrulefill}\thinspace}
\providecommand{\MR}{\relax\ifhmode\unskip\space\fi MR }
\providecommand{\MRhref}[2]{%
  \href{http://www.ams.org/mathscinet-getitem?mr=#1}{#2}
}
\providecommand{\href}[2]{#2}


\begin{thebibliography}{BBM82}

\bibitem[BBM82]{BBM82}
Pierre Berthelot, Lawrence Breen, and William Messing, \emph{Th\'eorie de
  {D}ieudonn\'e cristalline. {II}}, Lecture Notes in Mathematics, vol. 930,
  Springer-Verlag, Berlin, 1982. \MR{667344 (85k:14023)}

\bibitem[BS]{BS}
Bhargav Bhatt and Peter Scholze, \emph{Projectivity of the Witt vector affine Grassmannian}, arxiv:1507.06490v2

\bibitem[Cha]{chai}
Ching-Li Chai, \emph{Canonical coordinates on leaves of p-divisible groups: The
  two-slope case}, preprint.

\bibitem[Cha00]{chai00}
\bysame, \emph{Newton polygons as lattice points}, Amer. J. Math. \textbf{122}
  (2000), no.~5, 967--990. \MR{1781927 (2001k:11110)}

\bibitem[Cha05]{chai05}
\bysame, \emph{Hecke orbits on {S}iegel modular varieties}, Geometric methods
  in algebra and number theory, Progr. Math., vol. 235, Birkh\"auser Boston,
  Boston, MA, 2005, pp.~71--107. \MR{2159378 (2006f:11050)}

\bibitem[dJ98]{dJ98}
A.~J. de~Jong, \emph{{C}rystalline {D}ieudonn\'e module theory
  via formal and rigid geometry}, Inst.
  Hautes \'Etudes Sci. Publ. Math. {\bf 82} (1995), 5--96 (1996); \MR{1383213 (97f:14047)}

\bibitem[Fal99]{faltings99}
Gerd Faltings, \emph{Integral crystalline cohomology over very ramified
  valuation rings}, J. Amer. Math. Soc. \textbf{12} (1999), no.~1, 117--144.
  \MR{1618483 (99e:14022)}

\bibitem[GD71]{EGA1}
A.~Grothendieck and J.~A. Dieudonn{\'e}, \emph{El\'ements de g\'eom\'etrie
  alg\'ebrique. {I}}, Grundlehren der Mathematischen Wissenschaften
  [Fundamental Principles of Mathematical Sciences], vol. 166, Springer-Verlag,
  Berlin, 1971. \MR{3075000}

\bibitem[Ham15]{hamacher15b}
Paul Hamacher, \emph{The geometry of {N}ewton strata in the reduction modulo
  {$p$} of {S}himura varieties of {PEL} type}, Duke Math. J. \textbf{164}
  (2015), no.~15, 2809--2895. \MR{3430453}

\bibitem[HP]{HP}
Benjamin Howard and George Pappas, \emph{{R}apoport-{Z}ink spaces for spinor
  groups}, arXiv:1509.03914v1.

\bibitem[HV12]{HV12}
Urs Hartl and Eva Viehmann, \emph{Foliations in deformation spaces of local
  {$G$}-shtukas}, Adv. Math. \textbf{229} (2012), no.~1, 54--78. \MR{2854170}

\bibitem[HR]{HR}
Xuhua He and Michael Rapoport, \emph{Stratifications in the reduction of Shimura varieties}, arXiv:1509.07752v1

\bibitem[Kim]{kim}
Wansu Kim, \emph{{R}apoport-{Z}ink spaces of {H}odge type}, arXiv:1308.5537v5.

\bibitem[Kis]{kisin}
Mark Kisin, \emph{Mod $p$ points on {S}himura varieties of {A}belian type},
  preprint.

\bibitem[Kis10]{kisin10}
\bysame, \emph{Integral models for {S}himura varieties of abelian type}, J.
  Amer. Math. Soc. \textbf{23} (2010), no.~4, 967--1012. \MR{2669706
  (2011j:11109)}

\bibitem[Kot85]{kottwitz85}
Robert~E. Kottwitz, \emph{Isocrystals with additional structure}, Compositio
  Math. \textbf{56} (1985), no.~2, 201--220. \MR{809866 (87i:14040)}

\bibitem[Kot97]{kottwitz97}
\bysame, \emph{Isocrystals with additional structure. {II}}, Compositio Math.
  \textbf{109} (1997), no.~3, 255--339. \MR{1485921 (99e:20061)}

\bibitem[Lov]{lovering}
Tom Lovering, \emph{Filtered $F$-crystals on {S}himura varieties of abelian
  type}, preprint, arXiv:1702.06611v1 

\bibitem[Man04]{mantovan04}
Elena Mantovan, \emph{On certain unitary group {S}himura varieties},
  Ast\'erisque (2004), no.~291, 201--331, Vari{\'e}t{\'e}s de Shimura, espaces
  de Rapoport-Zink et correspondances de Langlands locales. \MR{2074715
  (2005g:11110c)}

\bibitem[Man05]{mantovan05}
\bysame, \emph{On the cohomology of certain {PEL}-type {S}himura varieties},
  Duke Math. J. \textbf{129} (2005), no.~3, 573--610. \MR{2169874
  (2006g:11122)}

\bibitem[Mes72]{messing72}
William Messing, \emph{The crystals associated to {B}arsotti-{T}ate groups:
  with applications to abelian schemes}, Lecture Notes in Mathematics, Vol.
  264, Springer-Verlag, Berlin, 1972. \MR{0347836 (50 \#337)}

\bibitem[Moo98]{moonen98}
Ben Moonen, \emph{Models of {S}himura varieties in mixed characteristics},
  Galois representations in arithmetic algebraic geometry ({D}urham, 1996),
  London Math. Soc. Lecture Note Ser., vol. 254, Cambridge Univ. Press,
  Cambridge, 1998, pp.~267--350. \MR{1696489 (2000e:11077)}

\bibitem[Oor00]{oort00}
Frans Oort, \emph{Newton polygons and formal groups: conjectures by {M}anin and
  {G}rothendieck}, Ann. of Math. (2) \textbf{152} (2000), no.~1, 183--206.
  \MR{1792294 (2002e:14075)}

\bibitem[Oor04]{oort04}
\bysame, \emph{Foliations in moduli spaces of abelian varieties}, J. Amer.
  Math. Soc. \textbf{17} (2004), no.~2, 267--296 (electronic). \MR{2051612
  (2005c:14051)}

\bibitem[OZ02]{OZ04}
Frans Oort and Thomas Zink, \emph{Families of {$p$}-divisible groups with
  constant {N}ewton polygon}, Doc. Math. \textbf{7} (2002), 183--201
  (electronic). \MR{1938119 (2003m:14066)}

\bibitem[Rap05]{rapoport05}
Michael Rapoport, \emph{A guide to the reduction modulo {$p$} of {S}himura
  varieties}, Ast\'erisque (2005), no.~298, 271--318, Automorphic forms. I.
  \MR{2141705 (2006c:11071)}

\bibitem[RR96]{RR96}
M.~Rapoport and M.~Richartz, \emph{On the classification and specialization of
  {$F$}-isocrystals with additional structure}, Compositio Math. \textbf{103}
  (1996), no.~2, 153--181. \MR{1411570 (98c:14015)}

\bibitem[RRS96]{RRS96}
Les Reid, Leslie~G. Roberts, and Balwant Singh, \emph{On weak subintegrality},
  J. Pure Appl. Algebra \textbf{114} (1996), no.~1, 93--109. \MR{1425322
  (97j:13012)}

\bibitem[Spr09]{springer09}
T.~A. Springer, \emph{Linear algebraic groups}, second ed., Modern Birkh\"auser
  Classics, Birkh\"auser Boston, Inc., Boston, MA, 2009. \MR{2458469}

\bibitem[Vas06]{vasiu06}
Adrian Vasiu, \emph{Crystalline boundedness principle}, Ann. Sci. \'Ecole Norm.
  Sup. (4) \textbf{39} (2006), no.~2, 245--300. \MR{2245533 (2007d:14081)}

\bibitem[Vie]{viehmann}
Eva Viehmann, \emph{On the geometry of the {N}ewton stratification},
  arXiv:1511.03156.

\bibitem[Vie08a]{viehmann08b}
\bysame, \emph{The global structure of moduli spaces of polarized
  {$p$}-divisible groups}, Doc. Math. \textbf{13} (2008), 825--852. \MR{2466182
  (2009i:14060)}

\bibitem[Vie08b]{viehmann08a}
\bysame, \emph{Moduli spaces of {$p$}-divisible groups}, J. Algebraic Geom.
  \textbf{17} (2008), no.~2, 341--374. \MR{2369090 (2008m:14089)}

\bibitem[Vie13]{viehmann13}
\bysame, \emph{Newton strata in the loop group of a reductive group}, Amer. J.
  Math. \textbf{135} (2013), no.~2, 499--518. \MR{3038719}

\bibitem[Yan11]{yang11}
Yanhong Yang, \emph{An improvement of de {J}ong-{O}ort's purity theorem},
  M\"unster J. Math. \textbf{4} (2011), 129--140. \MR{2869258}

\bibitem[Zha]{zhang}
Chao Zhang, \emph{Stratifications and good foliations for good reductions of
  {S}himura varieties of {H}odge type}, arXiv:1512.08102v1.

\bibitem[Zhu]{zhu}
Xinwen Zhu, \emph{Affine {G}rassmannians and the geometric {S}atake in mixed
  characteristic}, arXiv:1407.8519v2.

\end{thebibliography}
 \end{document}